\theoremstyle{plain}
\newtheorem{Thm}{Theorem}[section]
\newtheorem{Lem}{Lemma}[section]
\newtheorem{Prop}{Proposition}[section]
\newtheorem{Rmk}{Remark}[section]
\newtheorem{Def}{Definition}[section]
\newcommand{\N}{\ensuremath{\mathbb{N}}}
\newcommand{\R}{\ensuremath{\mathbb{R}}}
\long\def\begcom#1\endcom{}
\newcommand{\Leb}{\operatorname{vol}}
\newcommand{\diam}{\operatorname{diam}}
\newcommand{\length}{\operatorname{\length}}
\newcommand{\vol}{\operatorname{vol}}
\newcommand{\loc}{\operatorname{loc}}
    \def\ed{\end{description}}
\def\supp{\operatorname{supp}}
\def\length{\operatorname{length}}
\def\loc{\operatorname{loc}}
\def\dim{\operatorname{dim}}
\def\vep{\operatorname{\varepsilon}}
\begin{document}

\title[Entropy formula of folding type for $C^{1+\alpha}$ maps]
{Entropy formula of folding type for $C^{1+\alpha}$ maps}
\author[G. Liao]{Gang Liao$^\dag$} 

\author[S. Wang]{Shirou Wang$^{\ddag}$}

\keywords{Entropy formula,  folding entropy, absolute continuity, degeneracy}

{
\thanks{2020 {\it
 Mathematics Subject Classification}.  37C40, 37A60, 37D25}

\thanks{$\dag$
	School of Mathematical Sciences,  Center for Dynamical Systems and Differential Equations, Soochow University, Suzhou 215006, China. {\it Email: lg@suda.edu.cn}}

\thanks{$\ddag$ School of Mathematics, Jilin University, Changchun 130012, China, and Department of Mathematical $\&$ Statistical Sciences, University of Alberta, Edmonton, AB T6G 2G1, Canada. {\it  Email: shirou@jlu.edu.cn}. Corresponding author}

\begin{flushleft}
	{\it Comm. Math. Phys.},  to appear
	
	\vspace{5mm}
\end{flushleft}

\thanks{G. Liao was partially supported by the National Key R\&D Program of China (2022YFA1005802), NSFC (11790274, 12071328, 12122113), BK20211595, and Tang Scholar. S. Wang was partially supported by NSFC 12201244, NSFC 12271204, and a faculty development grant from Jilin University.}

\thanks{{\bf Data Availability}: Data sharing not applicable to this article as no datasets were generated or analyzed during the current study.}

\thanks{{\bf Conflicts of Interest}: All authors declare  no conflicts of interest. }

\begin{abstract}
In the study of non-equilibrium statistical mechanics, 
Ruelle derived explicit formulae for entropy production of smooth dynamical systems. The vanishing or strict positivity of entropy production is determined by the {\it entropy formula of folding type} 
\[h_{\mu}(f)= F_{\mu}(f)-\displaystyle\int\sum\nolimits_{\lambda_i(x)<0} \lambda_i(x)d\mu(x),
\]
which relates the metric entropy, folding entropy and negative Lyapunov exponents. 
This paper establishes the formula for all inverse SRB  measures of 
$C^{1+\alpha}$ maps, including those with degeneracy (i.e., zero Jacobian). More specifically, we establish the equivalence that $\mu$ is an inverse SRB measure if and only if the folding-type entropy formula holds and the Jacobian series is integrable. To overcome the degeneracy, we develop
Pesin theory for  general $C^{1+\alpha}$ maps. 
\end{abstract}	
}
\maketitle	

\section{Introduction}
	
Let $M$ be a compact $\mathsf{d}$-dimensional Riemannian manifold  without boundary, $f: M\to M$ be a  differentiable map, and   $\mu$ be an $f$-invariant Borel probability measure. The well-known Ruelle (or Margulis-Ruelle)  inequality \cite{Ruelle78} 
states that the metric entropy $h_{\mu}(f)$ is upper bounded by the sum of positive Lyapunov exponents 
\begin{equation}\label{ruelle ineq}
	h_{\mu}(f)\leq  \int\sum\nolimits_{\lambda_i(x)>0}\lambda_i(x)d\mu(x)\end{equation}
where, by Oseledets theorem \cite{Oseledets}, the Lyapunov exponents $\{\lambda_i(x): 1\le i\le \mathsf{d}\}$ 
exist on a full $\mu$-measure set.
The reverse inequality to \eqref{ruelle ineq}
was established by Pesin \cite{Pesin} 
under the assumption that $f$ is $C^{2}$ and  $\mu$ is equivalent to 
the Lebesgue measure induced by the Riemannian metric on $M$. This yields the equality 
\begin{equation}\label{pesin eq}h_{\mu}(f)= \int\sum\nolimits_{\lambda_i(x)>0}\lambda_i(x)d\mu(x),
\end{equation}
which is known as the Pesin entropy formula. 
 
The Pesin entropy formula  
has  been extensively discussed in various settings, including non-invertible cases (see \cite{Led81, Le-S, Liu98, LY1, LY2,Qian-Zhang95, LQ_book, Sun-Tian, CCE2015}, etc.). A 
major extension 
is 
the generalization 
of  $\mu$ 
to 
the Sinai-Ruelle-Bowen (SRB) measure \cite{Le-S}, an $f$-invariant measure that has absolutely continuous conditional measures on unstable manifolds, 
which, as shown by Ledrappier and Young \cite{LY1}, is the only class of measures for which 
the entropy formula \eqref{pesin eq} holds.  
The SRB measure 
was originally proposed as a generalization of the  
Gibbs distribution in the mathematical treatment of 
statistical physics
\cite{Sinai, Ruelle_measure, Bowen_book}. It  
is regarded as the measure that is most ``close to" the volume when $f$ is not volume-preserving.

In the 1990s, the ergodic theory of smooth dynamical systems was proposed  as a useful tool to investigate the non-equilibrium statistical mechanics, 
particularly at the  conceptual level (see \cite{Gallavotti_Cohen95,Ruelle96, Ruelle_SRB, Ruelle99}, etc.). 
In the study of  non-equilibrium  entropy production 
in the framework of non-invertible differentiable maps,   
Ruelle  \cite{Ruelle96} 
introduced the folding entropy
$F_{\mu}(f)=H_{\mu}(\epsilon|f^{-1}\epsilon),$    which captures the complexities 
when $f$ ``folds" different states onto a single one\footnote{The $\epsilon$ denotes the partition into single points; see \cite{Ruelle96, Liu03, LW2, Shu, Mihailescu, Wu_Zhu} for more discussions on the folding entropy.}, and proposed  the following inequality, referred to as the folding-type Ruelle inequality, 
relating the folding entropy and  negative Lyapunov exponents in the backward direction 
\begin{eqnarray} \label{ruelle ineq non-inv}
h_{\mu}(f)\leq F_{\mu}(f)-\displaystyle\int\sum\nolimits_{\lambda_i(x)<0}\lambda_i(x)d\mu(x).
\end{eqnarray}
The folding-type Ruelle inequality for  maps with degeneracy
 was  proved by Liu \cite{Liu03}  under a series of polynomial-type  conditions, 
which were later removed by the authors, thereby proving \eqref{ruelle ineq non-inv} for all $C^{1+\alpha}$ maps \cite{LW1}. 

The main concern of this paper is the equality in \eqref{ruelle ineq non-inv}, that is,   
\begin{eqnarray}\label{eq non-inv of folding type} 
h_{\mu}(f)= F_{\mu}(f)-\displaystyle\int\sum\nolimits_{\lambda_i(x)<0}\lambda_i(x)d\mu(x)
\end{eqnarray} 
and we call it 
the {\it entropy formula of folding type}.  As indicated in \cite{Ruelle96, Liu08}, \eqref{eq non-inv of folding type}
would reveal the vanishing  or strict positivity of the entropy production $e_{\mu}(f):=F_\mu(f)-\int\log|\det D_xf|d\mu$ in terms of an SRB measure $\mu$ (see Definition 1.2 of \cite{QZhu}  for the precise definition in the non-invertible setting via the inverse limit space).
To see this, note that  $\mu$ always satisfies (\ref{pesin eq}), and if the equality in \eqref{ruelle ineq non-inv} 
also holds, one can then deduce 
\begin{eqnarray*} e_\mu(f)&=&F_{\mu}(f)-\displaystyle\int\sum_{\lambda_i(x)<0} \lambda_i(x) d\mu(x)-\displaystyle\int\sum_{\lambda_i(x)>0} \lambda_i(x) d\mu(x)\\[2mm]
&=& F_{\mu}(f)-\displaystyle\int\sum_{\lambda_i(x)<0} \lambda_i(x) d\mu(x)-h_{\mu}(f)= 0.
\end{eqnarray*}

Concerning the inverse process,  
\begin{Def}
Let $\mu$ be an $f$-invariant measure. The measure $\mu$ is called an inverse SRB  measure  if its conditional measures  on the stable manifolds   are absolutely continuous.  
\end{Def}

As a natural analogy 
to the SRB measure for the Pesin entropy formula, 
it was conjectured in \cite{Liu03} that:

\medskip

\noindent{{\bf Conjecture}:
Let $\mu$ be an $f$-invariant measure.
Then $\mu$ is an  inverse SRB  measure 
	{\it if and only if} the entropy formula of folding type \eqref{eq non-inv of folding type} holds.
}

\medskip

The aim of this paper is
to establish the validity of the 
{\bf Conjecture} in the broad setting where $f$ is a $C^{1+\alpha}$ ($0<\alpha<1$) map  possibly with degeneracy, i.e., $\Sigma_f=\{x\in M: \det(D_xf)=0\}\neq\emptyset,$ and $\mu$ satisfies the integrability condition for the Jacobian:
\begin{eqnarray}\label{integrable}
\Big|\int\log |\det (D_xf)| d\mu(x)\Big|<\infty.
\end{eqnarray}
\medskip

For $C^2$ maps without degeneracy, Liu \cite{Liu08} proves the {\bf Conjecture}, where  the ``if" statement is established under the following  condition   (H)' on the 
the Jacobian $J_f(x)=\frac{d \mu\circ f}{ d\mu}(x)$. 

\bigskip

\noindent (H)' For $\mu$-a.e. $x,$ the Jacobian $J_f(y)$ is well-defined  
on $V^s(x)$, the arc-connected component of the stable manifold $W^s(x)$ containing $x$, such that
$\prod_{n=0}^\infty\dfrac{J_f(f^nx)}{J_f(f^ny)}$ converges and is bounded away from $0$ and $\infty$ on any given neighborhood of $x$ in $V^s(x)$ whose $d_{W^s}$-diameter (i.e., the distance along $V^s(x)$)
 is finite; moreover, 
$
\sum_{z:fz=y}\dfrac{1}{J_f(z)}=1  
$
holds almost everywhere on 
$V^s(x)$ with respect to  $\lambda^s_x$,  the  Lebesgue measure on $W^s(x)$.

\bigskip

Note that  if  $J_f(x)=\frac{d \mu\circ f}{ d\mu}(x)$ extends to  a H\"older continuous function  $J_f:M\to[1,+\infty)$ (the condition (H) in \cite{Liu08}), 
then (H)' is satisfied.  

\medskip

To deal with the degeneracy, this paper generalizes (H)' to the following (H)'', extending the boundedness condition of the Jacobian series to integrability.  It will be shown that    (H)'' is also  necessary  for inverse SRB measure, thus  establishing the equivalent  criterion  for inverse SRB measures.  The equivalence was previously unknown, even in the non-degeneracy setting. 
\medskip 

\noindent (H)'' There exists an increasing  measurable partition $\xi$ of $M$ subordinate to the $W^s$-manifolds (see Definition \ref{Def}) such that   for $\mu$-a.e. $x,$ the Jacobian $J_f$ and  a function $\psi$   are  $\lambda^s_x$-a.e.  well-defined  
on $\xi(x)$, and  
\begin{eqnarray}\label{integral_cond}
\int_{\xi(x)} \Theta(x,y) d\lambda^s_x(y)<\infty,
\end{eqnarray}
where  $\Theta(x,y)$  is   the  Jacobian series:
\[\Theta(x,y):=\lim_{n \rightarrow\infty}   \Big(\prod_{k=0}^n\dfrac{J_f(f^kx)}{J_f(f^ky)}\Big)\frac{\psi(f^{n+1}y)}{\psi(f^{n+1}x)}.\]
Moreover, 
for $\lambda^s_x$-a.e. $y\in \xi(x),$ it holds that
\begin{eqnarray}\label{sum=1}
\sum\nolimits_{z\notin\Sigma_f:fz=y}\dfrac{1}{J_f(z)}=1. 
\end{eqnarray}
\medskip

\begin{Rmk}\label{rem:generalization}
{\rm 	By taking $\psi\equiv 1$ in  condition (H)'', the condition \eqref{integral_cond} reduces to 		\begin{eqnarray*}\label{integral_condn}
			\int_{\xi(x)}   \prod_{k=0}^{\infty}\dfrac{J_f(f^kx)}{J_f(f^ky)} d\lambda^s_x(y)<\infty.
		\end{eqnarray*}
		Hence, condition (H)'' is  a generalization of   (H)'.
}
\end{Rmk}

The  main result reads  as follows. 
\begin{Thm}\label{thm1}
Let $M$ be a compact Riemannian manifold without boundary, $f:M\to M$ be a $C^{1+\alpha}$ map, and $\mu$ be an $f$-invariant Borel probability measure  satisfying the integrability condition \eqref{integrable} for the Jacobian. 
Then $\mu$ is an inverse SRB measure if and only  if the entropy formula of folding type 
\begin{eqnarray*}\
h_{\mu}(f)= F_{\mu}(f)-\displaystyle\int\sum\nolimits_{\lambda_i(x)<0}\lambda_i(x)d\mu(x)
\end{eqnarray*} 
and condition (H)'' hold. 		
\end{Thm}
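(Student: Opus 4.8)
The plan is to prove the two implications separately, with the main effort going into the ``if'' direction, which requires building a genuine Pesin-type stable manifold theory for $C^{1+\alpha}$ maps in the presence of degeneracy.

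\textbf{The ``only if'' direction.} Suppose $\mu$ is an inverse SRB measure. First I would show the entropy formula \eqref{eq non-inv of folding type} holds. This is the folding-type analogue of the Ledrappier--Young characterization: one builds an increasing measurable partition $\xi$ subordinate to the $W^s$-manifolds, disintegrates $\mu$ along $\xi$, and compares the conditional entropy $H_\mu(f^{-1}\xi \mid \xi)$ (which relates to $F_\mu(f)$) with the logarithmic contraction rate of $f$ on stable manifolds, which by absolute continuity of the conditional measures and the Oseledets/Pesin block machinery equals $-\int \sum_{\lambda_i(x)<0}\lambda_i(x)\,d\mu$. The key point is that $f$ acts on stable leaves like an expanding map backwards, so the folding entropy is ``used up'' exactly by the negative exponents when conditionals are absolutely continuous; the entropy gap in the folding-type Ruelle inequality \eqref{ruelle ineq non-inv} closes iff the conditionals are a.c., paralleling \cite{LY1}. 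Second, and this is the genuinely new part here, I would show that absolute continuity forces (H)''. The densities $\rho_x = d(\mu_x^\xi)/d\lambda_x^s$ exist; setting $\psi$ to be (a version of) this density and tracking how it transforms under $f$ using the change-of-variables formula on stable leaves, the infinite product $\Theta(x,y)$ emerges as the Radon--Nikodym cocycle $\rho_x(y)/\rho_x(x)$, whose integrability \eqref{integral_cond} is just $\mu_x^\xi$ being a probability measure with density $\rho_x$. The identity \eqref{sum=1} is the statement that $f_*$ of the a.c. conditional is again a.c. with the right density, i.e. that $\mu$ is $f$-compatible along stable manifolds — here the restriction $z\notin\Sigma_f$ matters and must be justified by showing the degenerate set carries no mass in the relevant fiber sense.

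\textbf{The ``if'' direction.} Assume \eqref{eq non-inv of folding type} and (H)''. I would argue that the conditional measures $\mu_x^\xi$ on leaves of $\xi$ are absolutely continuous. The strategy: the equality in the entropy formula, via the same disintegration argument run in reverse, forces the conditional measures to have a specific ``maximal'' structure — concretely, one shows that $h_\mu(f) = H_\mu(f^{-1}\xi\mid\xi)$ plus the stable contraction forces the transition probabilities of the factor map to be exactly the $1/J_f(z)$ weights appearing in \eqref{sum=1}. Combined with (H)'', this lets one write $\mu_x^\xi$ explicitly: the density is $\rho_x(y) = \Theta(x,y)\big/\int_{\xi(x)}\Theta(x,\cdot)\,d\lambda_x^s$, and one checks this is a well-defined, $f$-consistent family of probability densities, hence $\mu_x^\xi \ll \lambda_x^s$. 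The role of $\psi$ is to provide the ``seed'' density at infinity so that the infinite Jacobian product converges even when $J_f$ is unbounded due to degeneracy; \eqref{sum=1} guarantees consistency under pushing forward, and \eqref{integral_cond} guarantees the normalization is finite.

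\textbf{The main obstacle.} The hard part is not the entropy bookkeeping — that follows the Ledrappier--Young template — but rather developing Pesin theory (stable manifolds, absolutely continuous stable laminations, subordinate increasing partitions, Fubini-type arguments) for $C^{1+\alpha}$ maps \emph{with degeneracy}, where $D_xf$ may be singular on $\Sigma_f$ so that the standard graph-transform construction of local stable manifolds breaks down along the degenerate directions and the usual Lyapunov charts are unavailable near $\Sigma_f$. The resolution, as flagged in the abstract, is to set up a refined Pesin block decomposition controlling $\|(D_xf)^{-1}\|$ on the non-degenerate part and to use the integrability hypothesis \eqref{integrable} to ensure the bad set $\Sigma_f$ is seen with controlled frequency along orbits (a Borel--Cantelli / tempered-contraction argument), so that stable manifolds still exist $\mu$-a.e.\ and the leafwise Lebesgue measures $\lambda_x^s$ and their absolute-continuity properties make sense. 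Establishing the absolute continuity of the stable holonomy in this degenerate $C^{1+\alpha}$ setting — needed to pass between the leafwise statement and the global statement that $\mu$ is an inverse SRB measure — is where I expect the technical weight to concentrate.
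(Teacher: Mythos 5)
Your overall architecture coincides with the paper's: both directions run the Ledrappier--Young template along stable manifolds, the genuinely new work is the degenerate Pesin theory, and your ``only if'' sketch (disintegrate along an increasing subordinate partition, identify the Jacobian series as the Radon--Nikodym cocycle of the density $\psi$, and read off \eqref{integral_cond} and \eqref{sum=1}) is essentially what the paper does. One correction even there: the cocycle of $\psi$ is not $\Theta(x,y)$ alone but $\Theta(x,y)\prod_{k\ge0}D^s(f^ky)/D^s(f^kx)$, where $D^s=|\det(Df|_{E^s})|$; passing from integrability of $\psi$ to integrability of $\Theta$ requires the uniform boundedness of the stable-Jacobian product along leaves (Lemma \ref{prop:bound}), which in turn needs the H\"older control of $E^s$ and $D^s$ on stable manifolds in the degenerate setting.

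The substantive gaps are in the ``if'' direction. First, ``the same disintegration argument run in reverse'' is not the actual mechanism. One must first prove the separate identity $h_\mu(f)=H_\mu(\zeta\mid f^{-1}\zeta)$ for a suitable increasing subordinate partition $\zeta$ (Proposition \ref{prop:entropy}), i.e.\ that all entropy is generated along stable leaves; this, and not a leafwise-versus-global passage (the definition of inverse SRB is already leafwise), is where the Lipschitz regularity of the stable holonomy and the Lyapunov-neighborhood estimates are consumed. Without it the entropy equality \eqref{eq non-inv of folding type} says nothing about the conditionals on $\zeta$. Second, converting the equality into absolute continuity is done by constructing an explicit reference measure whose conditionals have density proportional to $\Delta(x,y)=\Theta(x,y)\prod_{k\ge0}D^s(f^ky)/D^s(f^kx)$ --- your candidate $\rho_x=\Theta/\int\Theta$ omits the stable Jacobian cocycle, without which the transfer identity of Lemma \ref{lem:Delta_transfer} fails --- computing its conditional information via that transfer identity, and then forcing it to coincide with $\bar\mu$ by strict concavity of $\log$ in Jensen's inequality. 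Third, since $f$ is non-invertible this reference measure can only be defined consistently on the inverse limit space $(\bar M,\tau,\bar\mu)$, on cylinder sets refining $\pi^{-1}\zeta$; the well-definedness under pull-back is precisely where \eqref{sum=1} and the vanishing of mass near $\Sigma_f$ (via $D^s\to0$ there together with the integrability of $\Theta$) are used. Your sketch never mentions the inverse limit space, so the construction of the candidate conditionals and the verification of their $f$-consistency --- the core of the sufficiency proof --- is missing.
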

\begin{Rmk}\label{Rem1}
{\rm 
If  an SRB measure $\mu$ is also  inverse SRB (i.e., $\mu$ has absolutely continuous conditional measures with respect to both the stable and unstable foliations), and satisfies the integrability condition \eqref{integrable}, 
then Theorem \ref{thm1} yields that its entropy production $e_{\mu}(f)=0.$
}
\end{Rmk}

As applications of Theorem \ref{thm1}, Section \ref{app} establishes the entropy formula of folding type \eqref{eq non-inv of folding type} for all conservative maps, using the “only if” part of  Theorem \ref{thm1}. It also proves the existence of inverse SRB measures for a class of dissipative degenerate systems, based on the “if” part of Theorem \ref{thm1}. 

With the establishment of Theorem \ref{thm1}, this paper makes two main contributions: 
(i) it develops  Pesin theory for all $C^{1+\alpha}$ maps, including those with degeneracy, and establishes the entropy formula of folding type for general inverse SRB measures; 
(ii) it generalizes Liu's boundedness condition (H)' in \cite{Liu08} to a weaker integrability condition (H)", thereby  providing a full characterization of inverse SRB measures. In particular, as a condition to guarantee the existence of inverse SRB measure, (H)'  is not necessary; this gap is addressed  by the new condition (H)''.

We note that differentiable maps with unbounded derivatives, such as maps with singularities, 
appear in various settings
(see, e.g., \cite{Katok-Strelcyn_book, Le-S, Lima}), and our method exhibits the possibility for such systems (see, for instance, the controllable setting in  \cite{Katok-Strelcyn_book} and the integrable setting  in \cite{LiaoQiu}).  In this paper, the degeneracy 
leads to the loss of control over the local behaviors of $f$ 
in the vicinity of $\Sigma_f.$ The  
integrability  property
\eqref{integrable}  
plays a crucial role in this regard,  
by  controlling the decay  of the determinant of $f$
along trajectories which may approach 
$\Sigma_f$ as time evolves. 
To overcome the degeneracy, the Pesin theory for general $C^{1+\alpha}$  maps is developed. 
The development of Pesin theory plays an important role in various settings of applications. For the adaptation to symbolic dynamics, it has also been developed for one dimensional maps with non-uniform expansion \cite{Lima1} and  for non-uniformly hyperbolic maps with singularities in higher dimensions \cite{Lima_book}.

This paper is organized as follows. In Section \ref{pes}, results in  Pesin theory, including the Pesin sets, the local stable manifolds and the associated measurable partitions, are  established for differentiable $C^{1+\alpha}$ maps with degeneracy under the integrability condition on the invariant measure. The  necessity and sufficiency  statements  in Theorem \ref{thm1}
are  
proved in Sections \ref{nes} and \ref{suf}, respectively.  In Section \ref{app}, applications in  conservative  and  dissipative systems are presented. 

\section*{Acknowledgments}
The authors are  grateful to the referees for their valuable suggestions, which have significantly improved both the  results and the proofs, particularly the recommendations regarding the  applications in Section \ref{app}.	

\section{Pesin theory for general $C^{1+\alpha}$ maps}\label{pes}
\subsection{Lyapunov neighborhoods for $C^{1+\alpha}$ maps with degeneracy}
In this section, we establish the Lyapunov metric and Lyapunov neighborhoods, 
which generalize 
the classical Pesin theory for  non-uniformly hyperbolic diffeomorphisms to  
the setting of $C^{1+\alpha}$ maps with degeneracy. 

First, recall  Oseledets theorem \cite{Oseledets},  which guarantees the existence of Lyapunov exponents. 

\begin{Thm}\label{thm:MET} There exists a measurable set $\Delta_0\subseteq M$ with $\mu(\Delta_0)=1$ such that for each  $x\in\Delta_0,$ there exists a  splitting   $T_x M=F_1(x) \oplus  F_2(x)\oplus \cdots \oplus F_{q(x)}(x)$ and a sequence of real numbers
$-\infty<\lambda_1(x)<\cdots<\lambda_{q(x)}(x)<+\infty$ such that
\begin{eqnarray*}
&&\lim\limits_{n\to +\infty}\dfrac{1}{n}\log\|D_xf^nv\|=\lambda_i(x), \quad \ \forall v\in F_i(x),\,\,1\le i\le q(x),\\[1mm]
&&\lim\limits_{n\to +\infty}\dfrac{1}{n} \log \angle (D_xf^nF_{i_1,\cdots,i_j}(x),  D_xf^nF_{l_1,\cdots,l_t}(x))=0,
\end{eqnarray*}
where  the first convergence is uniform in each $F_i(x)$, and $F_{i_1,\cdots,i_j}(x)$
and $F_{l_1,\cdots,l_t}(x)$
denote direct sums of pairwise distinct subspaces 
$F_{i_1}(x)$, $\cdots$, $F_{i_j}(x)$ and 
$F_{l_1}(x)$, $\cdots$, $F_{l_t}(x)$, respectively. In particular, if $\mu$ is ergodic, then $q:=q(x),$ $\lambda_i:=\lambda_i(x)$ for $1\le i\le q$ are constants  for $\mu$-almost everywhere. 
\end{Thm}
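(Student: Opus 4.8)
The plan is to reduce Theorem~\ref{thm:MET} to Kingman's subadditive ergodic theorem applied to the norms of the exterior powers of the derivative cocycle, and then to read off the splitting from the limiting singular-value operator by the Oseledets--Raghunathan argument. Since $M$ is compact and $f\in C^{1}$, the function $x\mapsto\log^{+}\|D_xf\|$ is bounded, and for each $1\le k\le\mathsf d$ the sequence $a^{(k)}_n(x):=\log\|\wedge^{k}D_xf^{n}\|$ is subadditive along the $f$-orbit (by submultiplicativity of $\|\wedge^{k}(\cdot)\|$) with $(a^{(k)}_1)^{+}\in L^{1}(\mu)$. First I would apply Kingman's theorem for each $k$ to obtain an $f$-invariant measurable function $\Lambda_k(x)\in[-\infty,+\infty)$ with $\tfrac1n a^{(k)}_n(x)\to\Lambda_k(x)$ $\mu$-a.e., and set $\Lambda_0\equiv 0$. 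Because $\|\wedge^{\mathsf d}D_xf^{n}\|=|\det D_xf^{n}|=\prod_{j=0}^{n-1}|\det D_{f^{j}x}f|$ and $\log|\det Df|\in L^{1}(\mu)$ by the standing integrability hypothesis~\eqref{integrable}, Birkhoff's theorem gives $\Lambda_{\mathsf d}(x)=\lim_n\tfrac1n\log|\det D_xf^{n}|>-\infty$ $\mu$-a.e.; in particular $D_xf$ is invertible for $\mu$-a.e.\ $x$, and $\|D_xf^{n}\|\ge|\det D_xf^{n}|^{1/\mathsf d}$ even forces $\Lambda_1(x)>-\infty$, so every number produced below is finite.

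Next, since the singular values of $D_xf^{n}$ are non-increasingly ordered, the quantities $\lambda^{(k)}(x):=\Lambda_k(x)-\Lambda_{k-1}(x)$, $1\le k\le\mathsf d$, form a non-increasing finite sequence at $\mu$-a.e.\ $x$; collapsing the equal values defines the distinct exponents $\lambda_1(x)<\cdots<\lambda_{q(x)}(x)$ with multiplicities $m_i(x)$, all measurable. These are $f$-invariant: from $D_{fx}f^{n}=D_xf^{n+1}\circ(D_xf)^{-1}$ one obtains, at every $x$ where $D_xf$ is invertible, $\big|\tfrac1n\log\|\wedge^{k}D_xf^{n}\|-\tfrac1n\log\|\wedge^{k}D_{fx}f^{n}\|\big|\le\tfrac1n C_k(x)$ with $C_k(x)<\infty$ --- only finiteness of $\log\|(D_xf)^{-1}\|$ at the base point is used, not its integrability, which is the point at which degeneracy could otherwise have interfered --- so $\Lambda_k\circ f=\Lambda_k$ $\mu$-a.e., whence $q$, $\lambda_i$, $m_i$ are $\mu$-a.e.\ constant when $\mu$ is ergodic.

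The technical core is producing the splitting together with the uniformity and angle estimates. Writing $A_n=D_xf^{n}$, the Oseledets--Raghunathan argument shows that the positive self-adjoint operators $(A_n^{*}A_n)^{1/(2n)}$ on $T_xM$ converge $\mu$-a.e.\ to a positive self-adjoint $\Lambda(x)$ whose eigenvalues are $e^{\lambda_i(x)}$ with multiplicities $m_i(x)$; taking $F_i(x)$ to be the corresponding mutually orthogonal eigenspaces yields the splitting $T_xM=\bigoplus_{i=1}^{q(x)}F_i(x)$, which is a function of $x$ alone (this forward splitting need not be $Df$-equivariant --- equivariance is recovered only on the inverse-limit system --- but the statement asserts no invariance). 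This convergence, with the accompanying regularity estimates of the Raghunathan argument, gives $\lim_n\tfrac1n\log\|D_xf^{n}v\|=\lambda_i(x)$ uniformly for unit $v\in F_i(x)$, and more generally $\tfrac1n\log\operatorname{Jac}(D_xf^{n}|_{F_I(x)})\to\sum_{i\in I}m_i(x)\lambda_i(x)$ for every eigenspace-sum $F_I(x):=\bigoplus_{i\in I}F_i(x)$. For the angle statement, fix disjoint index sets $I=\{i_1,\cdots,i_j\}$, $J=\{l_1,\cdots,l_t\}$; since $F_I(x)\oplus F_J(x)=F_{I\cup J}(x)$ is again an eigenspace-sum, the ratio $c_n:=\operatorname{Jac}(D_xf^{n}|_{F_{I\cup J}(x)})\big/\big(\operatorname{Jac}(D_xf^{n}|_{F_I(x)})\operatorname{Jac}(D_xf^{n}|_{F_J(x)})\big)$ satisfies $\tfrac1n\log c_n\to0$; as $F_I(x)\perp F_J(x)$, $c_n$ equals the product of the sines of the principal angles between $D_xf^{n}F_I(x)$ and $D_xf^{n}F_J(x)$, hence $\big(\sin\theta_n\big)^{\mathsf d}\le c_n\le\sin\theta_n$ for $\theta_n:=\angle(D_xf^{n}F_I(x),D_xf^{n}F_J(x))$, which forces $\tfrac1n\log\theta_n\to0$ --- the asserted estimate, since $F_I(x)=F_{i_1,\cdots,i_j}(x)$ and $F_J(x)=F_{l_1,\cdots,l_t}(x)$.

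The step I expect to be the main obstacle is the third one. Non-invertibility of $f$ means there is no $Df$-equivariant Oseledets splitting of $TM$ over $M$, so one is forced to work with the forward splitting furnished by $\Lambda(x)$ and then to push the bare singular-value convergence $(A_n^{*}A_n)^{1/(2n)}\to\Lambda(x)$ all the way through to (i) uniformity of the growth rate on each $F_i(x)$ and (ii) the subexponential growth of the restricted Jacobians underlying the angle estimate; (i)--(ii) carry essentially all of the analytic weight. The degeneracy enters only in the first step, via hypothesis~\eqref{integrable}, which is precisely what rules out $\lambda_1(x)=-\infty$ and secures the $\mu$-a.e.\ invertibility of $D_xf$ that makes the cocycle language of the later steps legitimate.
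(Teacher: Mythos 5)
Your proposal is correct in substance, but it follows a genuinely different route from the paper. The paper proves Theorem~\ref{thm:MET} by passing to the inverse limit space $\bar M$, viewing $A(\bar x)=D_{x_0}f$ as a cocycle over the invertible shift $(\bar M,\tau,\bar\mu)$, invoking the classical (two-sided, splitting-form) Oseledets theorem there, and then pushing the splitting down to $M$ by choosing, for $\mu$-a.e.\ $x_0$, one prehistory $\bar x$ with $\pi(\bar x)=x_0$; the splitting on $M$ is thus inherited from a (measurable) choice of backward orbit. You instead prove the one-sided theorem directly: Kingman on the exterior-power cocycles, finiteness of all exponents from \eqref{integrable} via $\Lambda_{\mathsf d}=\lim\frac1n\log|\det D_xf^n|>-\infty$, the Raghunathan limit operator $\Lambda(x)=\lim (A_n^*A_n)^{1/2n}$ whose eigenspaces give a canonical forward splitting (correctly observing that each eigenspace sits inside the corresponding flag difference, so the exact exponent and, with the exponential eigenspace-convergence estimates, the uniformity hold), and the angle estimate via the Gram-determinant identity expressing the Jacobian ratio $c_n$ as a product of sines of principal angles, with the Jacobian asymptotics on eigenspace sums obtained from the exterior-power cocycle. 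Both arguments lean on a black box of comparable weight (the classical MET on $\bar M$ versus the refined Raghunathan estimates), and the statement claims no $Df$-equivariance, so your non-equivariant forward splitting is just as admissible as the paper's prehistory-dependent one; your route even avoids the implicit measurable selection of $\bar x$ and makes explicit where \eqref{integrable} enters (ruling out $\lambda_1=-\infty$, which the paper's statement asserts but its proof does not address). One small imprecision: finiteness of \emph{all} exponents follows from $\Lambda_{\mathsf d}>-\infty$ together with the uniform upper bound $\lambda^{(k)}\le\log L$ (so the bottom exponent is $\ge\Lambda_{\mathsf d}-(\mathsf d-1)\log L$), not from $\Lambda_1>-\infty$ alone as your phrasing suggests; this is a wording issue, not a gap.
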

\begin{proof} 
In the classical Oseledets theorem, the splitting form shown above is stated for invertible systems. Since  this paper deals  with non-invertible maps, we pass to the inverse limit space $\bar M$ of $(M,f),$ defined by \[\bar M=\{\bar x=(\cdots, x_{-1},x_0,x_1,\cdots):x_i\in M,fx_i=x_{i+1}, i\in\mathbb Z\},\] which consists of all full trajectories of $f$ on $M.$ Let $\tau: \bar M\to \bar M$ be the left-shift and 
$\pi:\bar M\to M$, $\bar x\mapsto x_0$ be the natural projection. There exists a unique $\tau$-invariant measure $\bar\mu$ on $\bar M$ such that $\bar\mu=\mu\circ\pi^{-1}$. Note that the derivative $D_xf$ can be viewed as 
a linear operator  $\mathbb{R}^{\mathsf{d}} \to \mathbb{R}^{\mathsf{d}}$, and thus defines a measurable cocycle $A(\bar x) =D_{x_0}f $ over the system $(\bar M,  \tau, \bar\mu)$.  By applying  the Oseledets theorem to this cocycle,  there exists a measurable $\tau$-invariant set $\bar\Delta_0\subseteq\bar M$ of full $\bar\mu$-measure such that for each $\bar x\in\bar\Delta_0$, there exists a splitting  \[\{\bar{x}\} \times \mathbb{R}^{\mathsf{d}} = \{\bar{x}\} \times ( F_1(\bar{x}) \oplus  F_2(\bar{x})\oplus \cdots \oplus F_{q(\bar{x})}(\bar{x}))\] and a sequence of real numbers
$-\infty<\lambda_1(\bar{x})<\cdots<\lambda_{q(\bar{x})}(\bar{x})<+\infty$ satisfying the statement in the theorem. Now, for $\mu$-a.e. $x_0\in M,$ choose $\bar{x}\in\bar\Delta_0$ such that $\pi(\bar x)=x_0$. Define the  splitting at $x_0$ by \[\{x_0\}   \times \mathbb{R}^{\mathsf{d}}= \{x_0\} \times ( F_1(\bar{x}) \oplus  F_2(\bar{x})\oplus \cdots \oplus F_{q(\bar{x})}(\bar{x}))\]  and 	set $\lambda_i(x_0):=\lambda_i(\bar{x})$. Then the desired conclusion holds for the non-invertible system $(M, f, \mu)$. 
\end{proof}

Let
$I=\{x\in\Delta_0: 
\lambda_1(x)\ge 0\}.$ 
Recall that $\Sigma_f=\{x\in M: \det(D_xf)=0\}$ denotes the set of degenerate points of $f$.
From \eqref{integrable},  it follows that $\mu(\Sigma_f)=0.$ Let
$$\Delta_1=\Delta_0\backslash(\cup_{n\geq0}f^{-n}\Sigma_f\cup I).$$ 
 Note that $f\Delta_1\subseteq\Delta_1.$ 
For any $x\in\Delta_1$  and  $n\ge0,$ since  $f^ix\notin \Sigma_f$ for $0\le i<n$, $D_xf^n$ is invertible, and thus $(D_xf^n)^{-1}$ is well-defined.

For $x\in\Delta_1,$ define 
\begin{eqnarray*}
 s(x)=\max\{1\leq i\leq q(x): \lambda_i(x)<0\},\quad E^s(x)
=\bigoplus_{i=1}^{s(x)}F_i(x).   
\end{eqnarray*}
Let 
\begin{eqnarray*}
& &E_0(x)=E^s(x),\quad H_0(x)=E_0(x)^{\perp},
\end{eqnarray*}
and for $m\geq1,$ denote
\begin{eqnarray*}
& &E_m(x)=(D_xf^m)E_0(x),\ \ H_m(x)=(D_xf^m)H_0(x).
\end{eqnarray*}
Note that $E_m(x), H_m(x)\subset T_{f^mx}M$ for all $m\ge0.$
For any  two real numbers $a<b,$ define the set 
\begin{eqnarray*}
    \Lambda_{a,b}=\{x\in\Delta_1: \lambda_{s(x)}<a, \lambda_{s(x)+1}>b\}.
\end{eqnarray*}
Throughout the paper, let $\varepsilon>0$ denote a sufficiently small constant. 

\medskip

For a linear map $P: (M_1, \|\cdot\|_1)\to (M_2, \|\cdot\|_2)$, its co-norm is defined by \[\sigma(P)=\inf_{0\neq v \in M_1} {\|Pv\|_2}/{\|v\|_1}.\] 
The following Lemma \ref{Pesin-set} provides the construction of Pesin sets 
for general $C^{1+\alpha}$ differentiable maps $f$, including those admitting degeneracy.

\begin{Lem}
[Pesin sets in degenerate setting]
\label{Pesin-set}
There exists a measurable subset $\widetilde\Lambda_{a,b}\supseteq\Lambda_{a,b}$ such that $\widetilde\Lambda_{a,b}=\cup_{k\in\mathbb Z^+}\widetilde\Lambda_{a,b,k},$ where each $\widetilde\Lambda_{a,b,k}$ consists of points  $x$ whose tangent space admits a splitting $T_xM=E_0(x) \oplus H_0(x)$ satisfying the following properties: 

\begin{itemize}
\item[(i)] $\|Df^n\mid_{E_m(x)}\| \leq e^{k\varepsilon}e^{(a+\varepsilon) n}e^{m\varepsilon},\quad \forall  \ m,n\geq0;$ \\

\item[(ii)] $\sigma(Df^n\mid_{H_m(x)}) \geq e^{-k\varepsilon}e^{(b-\varepsilon)n}e^{-m\varepsilon},\quad \forall  \  m,n\geq0;$ \\
\item[(iii)]  $\angle(E_m(x),  H_m(x))\geq e^{-k\varepsilon}e^{-m\varepsilon}, \ \forall m\geq0;$\\

\item[(iv)] $\sigma(D_{f^mx}f)\geq e^{-k\varepsilon}e^{-m\varepsilon},\ \forall m\geq0$.
\end{itemize}
\end{Lem}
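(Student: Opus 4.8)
The plan is to build the sets $\widetilde\Lambda_{a,b,k}$ by passing, as in the proof of Theorem 2.1, to the inverse limit space $(\bar M,\tau,\bar\mu)$, where the Oseledets splitting is genuine and the classical Pesin regularity estimates are available, and then projecting down via $\pi$. First I would fix $\bar x\in\bar\Delta_0$ with $\pi(\bar x)\in\Lambda_{a,b}$ and recall that along the orbit $\{x_m:=f^m(\pi\bar x)=\pi(\tau^m\bar x)\}$ the subspaces $E_m(x)$ and $H_m(x)$ are exactly $D_xf^m E^s(x)$ and $D_xf^m H^s(x)$, which by $f\Delta_1\subseteq\Delta_1$ and the invertibility of $D_xf^m$ on $\Delta_1$ are well-defined. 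The four quantities to be controlled — the norm of $Df^n$ restricted to $E_m$, the co-norm of $Df^n$ restricted to $H_m$, the angle between $E_m$ and $H_m$, and the co-norm $\sigma(D_{x_m}f)$ — are all $\tau$-measurable functions of $\bar x$ and, by Oseledets (the uniform convergence on each $F_i$ and the subexponential angle decay), each has the property that its ``defect'' from the ideal exponential bound grows subexponentially along the $\tau$-orbit.

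The key step is then a tempering/Lusin-type argument: for a measurable function $g:\bar M\to(0,\infty)$ with $\frac1n\log g(\tau^n\bar x)\to 0$ $\bar\mu$-a.e., one sets $\hat g(\bar x)=\sup_{m\geq0} g(\tau^m\bar x)e^{-m\varepsilon}<\infty$ a.e., and $\hat g(\tau\bar x)\le e^{\varepsilon}\hat g(\bar x)$; stratifying $\{\hat g\le e^{k\varepsilon}\}$ over $k\in\mathbb Z^+$ gives the index-$k$ level sets. Applying this simultaneously to the relevant $g$'s (one for (i), built from $\sup_n \|Df^n|_{E_m(x)}\|e^{-(a+\varepsilon)n}$; one for (ii) built from $\sup_n \sigma(Df^n|_{H_m(x)})^{-1}e^{(b-\varepsilon)n}$, which is finite because $x\in\Lambda_{a,b}$ forces $\lambda_{s(x)}<a$ and $\lambda_{s(x)+1}>b$; one for (iii) from $\angle(E_m,H_m)^{-1}$; one for (iv) from $\sigma(D_{x_m}f)^{-1}$, finite for a.e.\ $\bar x$ since $\mu(\Sigma_f)=0$ by \eqref{integrable}) and taking the coarsest common stratification produces sets $\bar\Lambda_{a,b,k}\subseteq\bar M$ on which (i)–(iv) hold with constant $e^{k\varepsilon}$. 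I would then define $\widetilde\Lambda_{a,b,k}:=\pi(\bar\Lambda_{a,b,k})$; since the splitting $E_0(x)\oplus H_0(x)=E^s(x)\oplus E^s(x)^\perp$ and all four estimates depend only on the forward orbit of $x$, hence only on $x_0$, the descent to $M$ is consistent, and $\widetilde\Lambda_{a,b}:=\bigcup_k\widetilde\Lambda_{a,b,k}\supseteq\Lambda_{a,b}$ is measurable with full-measure-modulo-$I$ content.

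The point requiring the most care — and the genuinely new ingredient relative to the non-degenerate theory — is item (iv) and the interaction of the degeneracy with (i) and (ii). In the classical diffeomorphism setting $\sigma(D_{x_m}f)$ is bounded below uniformly, so (iv) is vacuous; here, trajectories in $\Delta_1$ may approach $\Sigma_f$, and $\sigma(D_{x_m}f)$ can decay to $0$. What saves the construction is precisely the integrability condition \eqref{integrable}: $\int\log|\det D_xf|\,d\mu>-\infty$ together with the Birkhoff ergodic theorem forces $\frac1m\log|\det D_{x_m}f|\to 0$ along $\mu$-a.e.\ orbit, and since $\sigma(D_{x_m}f)\ge |\det D_{x_m}f|\cdot\|D_{x_m}f\|^{-(\mathsf d-1)}$ with $\|D_{x_m}f\|$ bounded by $\sup_M\|Df\|<\infty$, we get $\frac1m\log\sigma(D_{x_m}f)\to 0$ a.e., which is exactly the hypothesis needed to run the tempering argument for (iv). The same subexponential control then feeds back to guarantee that the ``defects'' in (i) and (ii) — which in the degenerate case pick up contributions from the possibly-small $\sigma(D_{x_m}f)$ when one passes between $Df^n|_{E_m}$ and $Df^{n+1}|_{E_{m-1}}$ — remain subexponential, so the suprema defining $\hat g$ are finite a.e. I would organize the write-up so that this reduction ($\eqref{integrable}\Rightarrow$ subexponential decay of $\sigma(D_{x_m}f)$) is isolated as the first claim, after which (i)–(iv) follow from a single application of the abstract tempering lemma on $\bar M$.
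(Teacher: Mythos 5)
Your overall architecture -- a tempering/stratification lemma applied to measurable ``defect'' functions along the orbit -- is the same device the paper uses implicitly (the constants $B(x)$ and $C(x,m)$ there are exactly your $\hat g$), and your treatment of (iii) and (iv) is correct and matches the paper's: \eqref{integrable} plus $\sigma(D_xf)\ge|\det D_xf|\,\|D_xf\|^{-(\mathsf d-1)}$ and Birkhoff give the subexponential decay of $\sigma(D_{f^mx}f)$, which is then tempered. The inverse-limit detour is harmless but unnecessary, since all four quantities depend only on the forward orbit of $x$.

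The gap is in (i) and (ii). The whole content of those items is precisely the claim you assert without proof, namely that $g_m:=\sup_n\|Df^n|_{E_m(x)}\|e^{-(a+\varepsilon)n}$ grows subexponentially in $m$; and the mechanism you propose for it -- that the subexponential control of the single-step co-norms $\sigma(D_{f^mx}f)$ ``feeds back'' into (i) and (ii) -- does not work. Passing from $E_m$ to $E_0$ through the single steps gives $\|Df^n|_{E_m}\|\le\|Df^{n+m}|_{E_0}\|/\prod_{j=0}^{m-1}\sigma(D_{f^jx}f)$, and by Birkhoff the denominator decays at the exponential rate $c=\int\log\sigma(D_yf)\,d\mu(y)\le\lambda_1<a$, so the resulting bound on $g_m$ carries a factor $e^{(a-c)m}$ that grows at a definite exponential rate; tempering cannot absorb this. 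The correct route (the paper's) never touches $\sigma(D_{f^mx}f)$ for these items: it works on each Oseledets subspace $F_i(x)$ separately, where $\frac1n\log\sigma(Df^n|_{F_i})$ and $\frac1n\log\|Df^n|_{F_i}\|$ both converge to the \emph{same} $\lambda_i$, so that $\|Df^n|_{Df^mF_i}\|\le\|Df^{n+m}|_{F_i}\|/\sigma(Df^m|_{F_i})\le C(x,0)^2e^{(\lambda_i+\varepsilon')n}e^{2m\varepsilon'}$ with the $\lambda_i m$ terms cancelling; one then reassembles $E_m=\bigoplus_i Df^mF_i$ using the angle estimate (iii) and the Sine Law to control the components $v_i$ of a unit vector of $E_m$. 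You name the uniform convergence on each $F_i$ and the angle decay as ingredients, but your write-up plan (``(i)--(iv) follow from a single application of the abstract tempering lemma'' after isolating the decay of $\sigma(D_{f^mx}f)$) skips exactly this per-$F_i$ cancellation and recombination step, which is the substantive computation of the lemma.
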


We note that items (i)-(iii) above correspond to  the classical construction of Pesin sets in the non-uniform hyperbolic theory for $C^{1+\alpha}$ diffeomorphism (see, for instance, section 4.1 in \cite{Pollic}); item (iv) serves to control the decay of co-norms of $Df$ along trajectories, addressing the possible degeneracy of $f$.    

\begin{proof}[Proof of Lemma \ref{Pesin-set}]
We first prove  (iv). 
Since 
$ |\det (D_xf)| \le\sigma(D_xf)\cdot\|D_xf\|^{\mathsf{d-1}}$
and $L:=\sup_{x\in M}\|D_xf\|<\infty$, it follows from \eqref{integrable} that
$$\displaystyle\int\log(1/\sigma(D_xf))d\mu(x)\le -\displaystyle\int\log|\det(D_xf)|d\mu(x)+(\mathsf{d}-1)\int \log L<\infty.$$ 
Applying the Birkhoff Ergodic Theorem,
there exists a measurable set $\Delta^\prime\subseteq M$ with 
$f\Delta^\prime\subseteq\Delta^\prime$ and  $\mu(\Delta^\prime)=1$ such that for any $x\in\Delta^\prime,$
\begin{eqnarray}\label{Delta'}
\lim\limits_{n\to+\infty}\dfrac{1}{n}\log\sigma(D_{f^nx}f)=0.
\end{eqnarray}
For any $x\in\Delta^\prime,$  $\vep'>0$, let $B(x)>0$ be the largest constant satisfying
\begin{eqnarray*}
\sigma(D_{f^nx}f)\geq B(x)e^{-\varepsilon' n},\,\,\forall\,n\ge 0.    \end{eqnarray*}
By choosing a sufficiently large $k$ such that $B(x)\geq e^{-k\varepsilon'}$, (iv) is obtained. 

The (iii) follows similarly. By Theorem \ref{thm:MET}, 
\begin{eqnarray*}\lim_{n\to +\infty}\frac {1}{n} \log \angle(Df^n(F_{i_1,\cdots,i_j}(x)),  Df^n(F_{l_1,\cdots,l_t}(x)))=0,
\end{eqnarray*} 
where $F_{i_1}(x)$, $\cdots$, $F_{i_j}(x)$, $F_{l_1}(x)$, $\cdots$, $F_{l_t}(x)$ are pairwise distinct. Hence, in particular, 
\begin{eqnarray*}
\lim_{n\to +\infty}\frac {1}{n} \log\angle(  Df^nE_0(x),  Df^nH_0(x))=0.
\end{eqnarray*}
By choosing $k$ sufficiently large, it then follows that
\begin{eqnarray*}
\angle(Df^mE_0(x),  Df^mH_0(x))\geq e^{-k\varepsilon'}e^{-m\varepsilon'}, \quad \forall m\geq0.
\end{eqnarray*}
Hence, (iii) follows from the construction of $E_m$ and $H_m$.

To obtain (i) and (ii), note that  for each $1\le i\le q(x)$ and $m\ge 0$, 
	~$$  \underset{n\rightarrow+\infty}{\lim}\frac{1}{n}\log \sigma(Df^n\mid_{Df^m(F_i(x))})= \underset{n\rightarrow+\infty}{\lim}\frac{1}{n}\log\|Df^n\mid_{Df^m(F_i(x))}\|=\lambda_i(x).$$ 
 Thus, for any $\varepsilon'>0,$ there exists $N(x,m)$ such that for all $n\geq\,N(x,m)$,  
	$$ e^{(\lambda_i-\varepsilon')n }\le \sigma(Df^n\mid_{Df^m(F_i(x))})   \le  \|Df^n\mid_{Df^m(F_i(x))}\|  
	\leq\,e^{(\lambda_i+\varepsilon')n}.$$
Define $C(x,t)$ as the minimal positive constant such that for all $n\ge0,$ 
	~$$\frac{1}{C(x,m)} e^{(\lambda_i-\varepsilon')n}  \le   \sigma(Df^n\mid_{Df^mF_i(x)})   \le  \|Df^n\mid_{Df^mF_i(x)}\|    \le C(x,m)  e^{(\lambda_i+\varepsilon')n}.$$
For any $m, n >0$, on one hand, 
\begin{eqnarray*}
  \|Df^n\mid_{Df^mF_i(x)}\|   &\le  & 	\frac{ \|Df^{n+m}\mid_{F_i(x)}\|}{\sigma(Df^m\mid_{F_i(x)})}\\[2mm]
	&\le & \frac{  C(x,0)  e^{(\lambda_i+\varepsilon')(n+m)}} {\frac{1}{C(x,0)} e^{(\lambda_i-\vep')m}}
	=  C^2(x,0)    e^{(\lambda_i+\varepsilon')n} e^{2m\vep};
\end{eqnarray*}
on the other hand, 
\begin{eqnarray*}
 \sigma(Df^n\mid_{Df^mF_i(x)}) &\ge  & \frac{ \sigma(Df^{n+m}\mid_{F_i(x)})}{\|Df^m\mid_{F_i(x)}\|}\\[2mm]
	&\ge & \frac{\frac{1}{C(x,0)}   e^{(\lambda_i-\varepsilon')(n+m)}} {C(x,0) e^{(\lambda_i+\vep')m}}
	= \frac{ 1}{C^{2}(x,0) }   e^{(\lambda_i-\varepsilon')n} e^{-2m\vep'}.
\end{eqnarray*}
Therefore,  $$C(x,m)\leq C^2(x,0)e^{2m\varepsilon'}.$$
 By choosing $k$ large enough so that   $C^2(x,0)\le e^{k\vep'}$, one obtains that for $x\in \Lambda_{a,b}$ and $1\le i\le s(x)$, $$\|Df^n\mid_{Df^mF_i(x)}\|\leq e^{k\varepsilon'}e^{(a+\varepsilon') n}e^{2m\varepsilon'},\quad\forall  m,n\geq0.$$

For any unit vector $v\in E_m(x)$, it can be represented as \[v=\sum\nolimits_{1\le i\le s(x)}v_i=v_i+v_i^c,\]  where $v_i\in Df^mF_i(x)$ and $v_i^c\in Df^mF_i^c(x):=\bigoplus_{1\le j\le s(x),\,j\neq i} Df^mF_j(x).$  By the Sine Law (see Figure \ref{fig:vector} with $\theta=\pi-\angle(v_i,v_i^c)$, $\beta=\angle(v,v_i^c)$), we have   \begin{align*}
\dfrac{\|v\|}{\sin\angle(v_i,v_i^c)}=\dfrac{\|v_i\|}{\sin\angle(v,v_i^c)}.
\end{align*}
Since $\sin\angle(v_i,v_i^c)=\sin\angle(Df^mF_i(x), Df^mF^c_{i}(x))\geq\sin(e^{-k\varepsilon'}e^{-m\varepsilon'}),$ 
we have  
\begin{eqnarray*}
\|v_i\|\le\dfrac{1}{\sin\angle(v_i,v_i^c)}\leq \frac{1}{\sin (e^{-k\varepsilon'}e^{-m\varepsilon'})}
\end{eqnarray*}
for all $1\le i\le s(x).$ Hence, 
\begin{eqnarray*}
\|Df^nv\| \leq  \mathsf{d}\max_{1\le i\le s(x)} \|Df^nv_i\|\le  \mathsf{d} \frac{e^{k\varepsilon'}e^{(a+\varepsilon') n}e^{2m\varepsilon'}}{\sin (e^{-k\varepsilon'}e^{-m\varepsilon'})},\quad\forall n\ge0.
\end{eqnarray*}
By the arbitrariness of $v$, it follows that 
\[\|Df^n\mid_{E_m(x)}\|\leq c_0e^{2k\varepsilon'}e^{(a+\varepsilon') n} e^{3m\varepsilon'},\quad\forall\ m,n\geq0,\]
where the constant $c_0=\mathsf{d}\sup_{0< \vartheta\le \frac{\pi}{2}} \frac{\vartheta}{\sin \vartheta}$. By taking  $\vep'$ sufficiently small  and   $k$ sufficiently large, (i) is obtained with $\vep=4\vep'$ and $c_0\le e^{k\vep'}$. Item
(ii) then follows by analogous arguments. 
\end{proof}

\begin{figure}
 \centering    \includegraphics[width=0.6\linewidth]{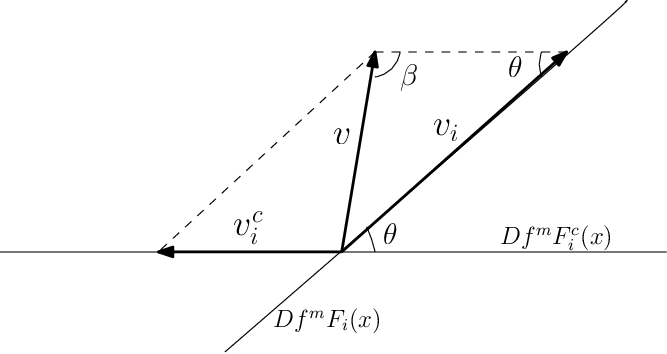}   \caption{Geometric decomposition of  unit vector $v \in E_m(x)$ into $v_i \in Df^m F_i(x)$ and $v_i^c \in Df^m F_i^c(x)$.}   \label{fig:vector}
\end{figure}

It is not hard to see that  $\widetilde\Lambda_{a,b,k}$ is compact. Moreover,  the following holds. 
\begin{Prop}
For each $m\ge0,$ the subspace $E_m(x)$ is uniquely determined and depends continuously on $x\in \widetilde\Lambda_{a,b,k}$.
\end{Prop}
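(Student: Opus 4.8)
The plan is to pin down $E_0(x)$ by an intrinsic dynamical characterisation that does not refer to the chosen splitting. Fix $x\in\widetilde\Lambda_{a,b,k}$ and let $T_xM=E_0(x)\oplus H_0(x)$ be any splitting for which (i)--(iv) hold (with the given $a,b,k$). By (iv), $\sigma(D_{f^mx}f)>0$ for all $m\ge0$, so every $D_xf^n$ is invertible and $T_{f^nx}M=E_n(x)\oplus H_n(x)$ with $E_n(x)=D_xf^nE_0(x)$, $H_n(x)=D_xf^nH_0(x)$. I claim that, for $\varepsilon$ small enough that $a+\varepsilon<b-2\varepsilon$,
\[
E_0(x)=\Big\{\,v\in T_xM:\ \limsup_{n\to\infty}\tfrac1n\log\|D_xf^nv\|\le a+\varepsilon\,\Big\}.
\]
The inclusion $\subseteq$ is immediate from (i) with $m=0$. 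For $\supseteq$, decompose a given $v=v^E+v^H$ along $E_0(x)\oplus H_0(x)$ and suppose $v^H\ne0$; since $D_xf^nv=D_xf^nv^E+D_xf^nv^H$ is then the decomposition along $E_n(x)\oplus H_n(x)$, items (ii) and (iii) yield
\[
\|D_xf^nv\|\ \ge\ \|D_xf^nv^H\|\,\sin\angle\!\big(E_n(x),H_n(x)\big)\ \ge\ \tfrac{2}{\pi}\,e^{-2k\varepsilon}\,\|v^H\|\,e^{(b-2\varepsilon)n},
\]
so $\limsup_n\tfrac1n\log\|D_xf^nv\|\ge b-2\varepsilon>a+\varepsilon$ and $v$ is not in the right-hand set. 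As the right-hand side is independent of the splitting, $E_0(x)$ is uniquely determined, hence so is $E_m(x)=D_xf^mE_0(x)$ for every $m\ge0$.

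For continuity I would argue by sequences, working inside a coordinate chart around a fixed $x\in\widetilde\Lambda_{a,b,k}$ so that subspaces and the (continuous, since $f\in C^1$) derivatives $D_yf^n$ at nearby base points may be compared. Let $x_j\to x$ in $\widetilde\Lambda_{a,b,k}$; by compactness of Grassmannians and finiteness of the possible values of $\dim E_0$, it suffices to show that along any subsequence with $\dim E_0(x_j)\equiv d$, $E_0(x_j)\to E$ and $H_0(x_j)\to H$, one has $E=E_0(x)$. First, $E\subseteq E_0(x)$: for $v\in E$ pick unit $v_j\in E_0(x_j)$, $v_j\to v$; (i) with $m=0$ gives $\|D_{x_j}f^nv_j\|\le e^{k\varepsilon}e^{(a+\varepsilon)n}$ for all $n$, and letting $j\to\infty$ with $n$ fixed gives $\|D_xf^nv\|\le e^{k\varepsilon}e^{(a+\varepsilon)n}\|v\|$, so $v\in E_0(x)$ by the characterisation; thus $\dim E_0(x)\ge d$. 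Second, $E_0(x)\cap H=\{0\}$: if $0\ne v\in E_0(x)\cap H$, choose unit $w_j\in H_0(x_j)$, $w_j\to v/\|v\|$; (ii) with $m=0$ gives $\|D_{x_j}f^nw_j\|\ge e^{-k\varepsilon}e^{(b-\varepsilon)n}$, so in the limit $\|D_xf^nv\|\ge e^{-k\varepsilon}e^{(b-\varepsilon)n}\|v\|$, i.e.\ $\liminf_n\tfrac1n\log\|D_xf^nv\|\ge b-\varepsilon$, contradicting $v\in E_0(x)$ (which forces $\limsup\le a+\varepsilon<b-\varepsilon$). Since $\dim H=\mathsf d-d$, transversality forces $\dim E_0(x)\le d$, hence $\dim E_0(x)=d$ and $E=E_0(x)$. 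Consequently $\dim E_0(x_j)$ is eventually $\dim E_0(x)$ and $E_0(x_j)\to E_0(x)$; applying $D_{x_j}f^m\to D_xf^m$ then gives $E_m(x_j)\to E_m(x)$.

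The main obstacle is the transversality step $E_0(x)\cap H=\{0\}$, i.e.\ ruling out an upward jump of $\dim E_0$ in the limit. Unlike $E_0$, the complement $H_0$ is merely the orthogonal complement of $E_0$ rather than a genuine unstable subspace, so it has no single-vector characterisation and one cannot simply repeat the $E_0$-argument for it; instead one uses the co-norm bound (ii) on $H_0$ together with the invertibility of $D_xf^n$ provided by (iv) (so that $H_0$ is carried to $H_n$ and the angle bound (iii) stays available). The remaining ingredients — comparing subspaces and derivatives at moving base points, and the extraction of subsequences with constant $\dim E_0$ and convergent $E_0(x_j)$, $H_0(x_j)$ — are routine given that $\widetilde\Lambda_{a,b,k}$ and the Grassmannians are compact.
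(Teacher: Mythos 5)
Your proof is correct and follows essentially the same route as the paper: uniqueness via a growth-rate characterisation of $E_0(x)$ (slow vectors belong to $E_0$, anything with a nonzero $H_0$-component grows at rate at least $b-2\varepsilon$ by the co-norm and angle bounds), and continuity via compactness of the Grassmannian plus uniqueness of the subsequential limit. Your write-up is in fact more complete than the paper's, since it supplies the quantitative angle estimate behind the paper's one-line ``zero projection'' assertion and explicitly rules out a dimension jump in the limit, both of which the paper leaves implicit.
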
  
\begin{proof}  It suffices to  consider $E_0(x)$, as  $E_m(x)=Df^m(E_0(x))$.   

For the uniqueness, suppose that for $ x\in \widetilde\Lambda_{a,b,k}$, there is another splitting $T_xM=G(x)\oplus Z(x)$ such that 
\begin{eqnarray*}
&&\lim\limits_{n\to +\infty}\dfrac{1}{n}\log\|D_xf^nv\|<a, \quad \ \forall v\in G(x),\\
&&\lim\limits_{n\to +\infty}\dfrac{1}{n}\log\|D_xf^nv\|>b, \quad \ \forall v\in Z(x).
\end{eqnarray*} 
Since the splitting $T_xM=E_0(x)\oplus H_0(x)$ also satisfies the above estimates, any non-zero $v\in G(x)$ must have zero projection on $H_0(x)$ which implies $G(x)\subseteq E_0(x)$.  The reverse inclusion   $E_0(x)\subseteq G(x)$ follows  similarly. Hence, $G(x)=E_0(x)$, and the uniqueness is established.    

For the continuity, let $\{x_p\}\subset  \widetilde\Lambda_{a,b,k}$ be a sequence converging to some $y\in\widetilde\Lambda_{a,b,k}$ as $p\to\infty$. Then the sequence $\{E_0(x_p)\}$ has a convergent subsequence with limit $\tilde E$, which satisfies the properties in Lemma \ref{Pesin-set}. The uniqueness then implies that $\tilde E=E_0(y)$, thereby establishing the continuity of $E_0$. 
\end{proof}

For $x\in\widetilde\Lambda_{a,b}$ and any $m\ge 0$, let   
\begin{eqnarray*}
\langle v_1,v_2\rangle_{(x,m)}^\prime=
\begin{cases}   \sum_{n=0}^{+\infty}e^{-2(a+2\varepsilon)n}\langle D_{f^mx}f^nv_1,D_{f^mx}f^nv_2\rangle, \quad\forall v_1, v_2\in E_m(x),\\\\
   \sum_{n=0}^{+\infty} e^{2(b-2\varepsilon)n}\langle (D_{f^{m-n}x}f^n)^{-1}v_1,(D_{f^{m-n}x}f^n)^{-1}v_2\rangle,\quad\forall v_1, v_2\in H_m(x),\\\\
   0,\quad\forall v_1\in E_m(x), v_2\in H_m(x),
\end{cases}
\end{eqnarray*}
where $\langle\ ,\ \rangle$ is the inner product induced by the Riemannian metric on 
$T_{f^mx}M.$ Note that the second equation is well-defined for  $x\in\Delta_1.$ 
  
The  Lyapunov metric   $\|\cdot\|_{(x,m)}^\prime $ is defined as the norm induced by $\langle\cdot,\cdot\rangle_{(x,m)}^\prime$. By definition,  under the Lyapunov metric,  the restrictions 
$Df|_{E_{m}(x)}$ and   $Df|_{H_{m}(x)}$ illustrate uniform contracting and expanding properties, respectively. This leads to the following normalization with respect to the Lyapunov metric. 

\begin{Prop}\label{prop:new_norm}
For $x\in\widetilde\Lambda_{a,b,k},$ the sequence of norms $\{\|\cdot\|^\prime_{(x,m)}: m\geq0\}$ satisfies the following properties:  
\begin{itemize}
\item[(i)] $\|D_{f^mx}fv\|^\prime_{(x,m+1)}\leq e^{(a+2\varepsilon)}\|v\|^\prime_{(x,m)},\quad\forall v\in E_m(x);$\\

\item[(ii)]$\|D_{f^mx}fv\|^\prime_{(x,m+1)}\geq e^{(b-2\varepsilon)}\|v\|^\prime_{(x,m)},\quad \forall v\in H_m(x);$\\

\item[(iii)]$\frac{1}{2}\|v\|\leq\|v\|^\prime_{(x,m)}\leq C_0' e^{k\varepsilon}e^{m\varepsilon }\|v\|,\quad \forall v\in T_{f^mx}M,$
\end{itemize}
where the constant $C_0'$ in (iii) is independent of $k.$
\end{Prop}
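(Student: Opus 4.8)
\textbf{Proof proposal for Proposition \ref{prop:new_norm}.} The plan is to verify the three claims directly from the definition of the inner products $\langle\cdot,\cdot\rangle'_{(x,m)}$, exploiting the telescoping structure of the defining series; this is the standard Pesin "Lyapunov chart" computation, adapted here so that the index-shift factors $e^{m\varepsilon}$ coming from the degenerate Pesin set $\widetilde\Lambda_{a,b,k}$ are tracked carefully. First I would prove (i): for $v\in E_m(x)$, write out
\[
\big(\|D_{f^mx}fv\|'_{(x,m+1)}\big)^2=\sum_{n=0}^{\infty}e^{-2(a+2\varepsilon)n}\|D_{f^{m+1}x}f^n(D_{f^mx}fv)\|^2=\sum_{n=0}^{\infty}e^{-2(a+2\varepsilon)n}\|D_{f^mx}f^{n+1}v\|^2,
\]
reindex with $j=n+1$, and factor out $e^{2(a+2\varepsilon)}$; the $j=0$ term is nonnegative, so the sum is bounded by $e^{2(a+2\varepsilon)}\big(\|v\|'_{(x,m)}\big)^2$, giving (i). Claim (ii) is the mirror image: for $v\in H_m(x)$ one uses that $D_{f^mx}f$ maps $H_m(x)$ isomorphically onto $H_{m+1}(x)$ and that $(D_{f^{(m+1)-n}x}f^n)^{-1}(D_{f^mx}fv)=(D_{f^{m-(n-1)}x}f^{n-1})^{-1}v$, so the same reindexing $j=n-1$ converts the $H_{m+1}$-series into $e^{2(b-2\varepsilon)}$ times a series dominated termwise by the $H_m$-series (again discarding a nonnegative tail term), yielding (ii).

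For (iii), the lower bound $\tfrac12\|v\|\le\|v\|'_{(x,m)}$ follows because each defining series contains its $n=0$ term, which equals $\|v_{E}\|^2$ (resp. $\|v_{H}\|^2$) for the $E_m$- and $H_m$-components, and because the two summands are orthogonal in $\langle\cdot,\cdot\rangle'_{(x,m)}$ by construction, so $\big(\|v\|'_{(x,m)}\big)^2\ge \|v_E\|^2+\|v_H\|^2$; since $v=v_E+v_H$ with the Euclidean angle between $E_m(x)$ and $H_m(x)$ bounded below (item (iii) of Lemma \ref{Pesin-set}), one has $\|v\|\le 2(\|v_E\|+\|v_H\|)$ up to the angle constant, and a routine elementary inequality then gives the factor $\tfrac12$ (or any fixed constant one fixes at the outset). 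For the upper bound one estimates each series geometrically: on $E_m(x)$, using Lemma \ref{Pesin-set}(i), $\|D_{f^mx}f^nv\|=\|Df^n\mid_{E_m(x)}v\|\le e^{k\varepsilon}e^{(a+\varepsilon)n}e^{m\varepsilon}\|v\|$, so the series is bounded by $e^{2k\varepsilon}e^{2m\varepsilon}\|v\|^2\sum_n e^{-2(a+2\varepsilon)n}e^{2(a+\varepsilon)n}=e^{2k\varepsilon}e^{2m\varepsilon}\|v\|^2\sum_n e^{-2\varepsilon n}$, a convergent geometric series with sum depending only on $\varepsilon$; on $H_m(x)$ one similarly uses Lemma \ref{Pesin-set}(ii) in the form $\|(D_{f^{m-n}x}f^n)^{-1}v\|\le \sigma(Df^n\mid_{H_{m-n}(x)})^{-1}\|v\|\le e^{k\varepsilon}e^{-(b-\varepsilon)n}e^{(m-n)\varepsilon}\|v\|\le e^{k\varepsilon}e^{-(b-\varepsilon)n}e^{m\varepsilon}\|v\|$, so that series is bounded by $e^{2k\varepsilon}e^{2m\varepsilon}\|v\|^2\sum_n e^{2(b-2\varepsilon)n}e^{-2(b-\varepsilon)n}=e^{2k\varepsilon}e^{2m\varepsilon}\|v\|^2\sum_n e^{-2\varepsilon n}$. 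Finally, for a general $v=v_E+v_H$ with $v_E\in E_m(x)$, $v_H\in H_m(x)$, the lower angle bound from Lemma \ref{Pesin-set}(iii) controls $\|v_E\|,\|v_H\|$ by a constant multiple of $e^{k\varepsilon}e^{m\varepsilon}\|v\|$, and combining with the two series estimates and taking square roots gives $\|v\|'_{(x,m)}\le C_0'e^{k\varepsilon}e^{m\varepsilon}\|v\|$; one then absorbs all exponents in $k,m$ and the various numerical constants (the geometric sums, the angle constant, $\mathsf d$) into a single $C_0'$, keeping track that the power of $e^{k\varepsilon}$ and $e^{m\varepsilon}$ ends up being exactly one after the rescaling of $\varepsilon$ implicit in the definition of $\widetilde\Lambda_{a,b,k}$, and noting that $C_0'$ collects only the geometric-series sums and angle constant, hence is independent of $k$.

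The routine parts are the two reindexing arguments for (i) and (ii) and the geometric-series bounds for the upper estimate in (iii). The one place that needs care — and which I expect to be the main obstacle — is bookkeeping the $e^{m\varepsilon}$ and $e^{k\varepsilon}$ factors through all three estimates so that the final exponent in (iii) is exactly $e^{k\varepsilon}e^{m\varepsilon}$ and the constant $C_0'$ genuinely does not depend on $k$; this requires being slightly generous in the choice of $\varepsilon$ at the level of Lemma \ref{Pesin-set} (i.e. running that lemma with a smaller $\varepsilon'$ and setting $\varepsilon$ to a fixed multiple of it, exactly as was already done in the proof of Lemma \ref{Pesin-set}), so that the surplus $e^{-2\varepsilon n}$ in each geometric sum is available to guarantee convergence with room to spare, and the remaining $k$- and $m$-dependence is confined to prefactors that can be pulled out of the sums.
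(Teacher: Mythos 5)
Your proposal follows the paper's proof essentially step for step: (i) and (ii) by the reindexing/telescoping of the defining series (discarding the nonnegative boundary term), and the upper bound in (iii) by the geometric-series estimate from Lemma \ref{Pesin-set}(i)--(ii), with $C_0'^2=\sum_{n\ge0}e^{-2\varepsilon n}$ depending only on $\varepsilon$. One caveat on the lower bound in (iii): you invoke the angle bound of Lemma \ref{Pesin-set}(iii), but that bound is $e^{-k\varepsilon}e^{-m\varepsilon}$ and so degrades with $k$ and $m$; if the step ``$\|v\|\le 2(\|v_E\|+\|v_H\|)$ up to the angle constant'' genuinely required it, you could not conclude the \emph{uniform} constant $\tfrac12$. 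Fortunately it does not: the plain triangle inequality gives $\|v\|\le\|v_E\|+\|v_H\|$, hence $\|v\|^2\le 2\|v_E\|^2+2\|v_H\|^2\le 2\|v_E\|'^2+2\|v_H\|'^2=2\|v\|'^2$ by the $n=0$ terms and the built-in orthogonality, which is exactly the paper's argument and yields $\|v\|'\ge\tfrac{1}{\sqrt2}\|v\|\ge\tfrac12\|v\|$ with no angle input. On the other hand, your care with the angle works in your favor for the upper bound: the paper passes from the separate $E_m$- and $H_m$-estimates to all of $T_{f^mx}M$ without comment, whereas splitting a general $v$ does cost a factor of order $e^{k\varepsilon}e^{m\varepsilon}$ from Lemma \ref{Pesin-set}(iii), which, as you note, is absorbed by running the construction with a smaller $\varepsilon'$ exactly as in the proof of Lemma \ref{Pesin-set}.
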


\begin{proof}

	For  $v\in E_m(x)$,
		\begin{eqnarray*}
		{\|D f(v) \|'} ^2 
		&=& \sum_{n=0}^{\infty} e^{-2(a+2\varepsilon)n}  \|D f^{n+1}(v) \|^2 \\
		&=& e^{2(a+2\varepsilon)}  \sum_{n=0}^{\infty} e^{-2(a+2\varepsilon)(n+1)}  \|D f^{n+1}(v) \|^2\\		&\le&e^{2(a+2\varepsilon)}  \sum_{n=0}^{\infty} e^{-2(a+2\varepsilon)n}  \|D f^n(v) \|^2
	\ \le\ e^{2(a+2\varepsilon)}  \| v \|'^2.
		\end{eqnarray*}
Thus, (i) holds. A similar argument yields (ii).    

To prove (iii), we first estimate the upper bound. For $v\in E_m(x)$, we have 
\begin{eqnarray*}
{\|v\|'} ^2 
&=& \sum_{n=0}^{\infty} e^{-2(a+2\varepsilon)n}  \|D f^{n}(v) \|^2 \\
&\le &  \sum_{n=0}^{\infty} e^{-2(a+2\varepsilon)n}  e^{2(k+m)\vep} e^{2(a+\varepsilon)n}  \|v\|^2 \\
&=&   e^{2(k+m)\vep}  \sum_{n=0}^{\infty} e^{-2\varepsilon n}  \|v\|^2.
\end{eqnarray*}
Denote ${C_0'}^2:=\sum_{n=0}^{\infty} e^{-2\varepsilon n} $. Then $\| v \|' 
\le    e^{(k+m)\vep}  C_0' \|v\|,$
which, by an analogous estimate, also holds for 
$v\in H_m(x)$. 
Thus, 
\begin{eqnarray*}\label{v}
\| v \|' 
\le    e^{(k+m)\vep}  C_0' \|v\|,\quad\forall v\in T_{f^mx} M.
\end{eqnarray*}

To obtain the lower bound, for   $v\in T_{f^mx} M$, write $v=v_E+v_H$ with $v_E\in E_m$ and $v_{H}\in H_m$. Then  
\begin{eqnarray*}
	{\| v \|}^2 &= &\langle v_E + v_H,  v_E + v_H \rangle \\[2mm]
	&= &{\| v_E \|}^2 + {\| v_H \|}^2 + 2\langle v_E, v_H\rangle\\[2mm]
		&\le &2{\| v_E \|}^2 + 2{\| v_H \|}^2 
	\le  2{\| v_E \|'}^2 + 2{\| v_H \|'}^2= 2 { \| v \|}'^2.
\end{eqnarray*}
Hence, 
${\| v \|}'\ge\frac{1}{2}\|v\|.$
\end{proof}

According to (iii) of  Proposition \ref{prop:new_norm},  for $x\in\widetilde\Lambda_{a,b,k}$ and  $m\ge0$, it holds that 
\begin{eqnarray*}
\|D_{f^m(x)}f\|'&=&\sup_{v\neq 0} \frac{\|D_{f^m(x)}f(v)\|'}{\|v\|'}\nonumber\\
&\le&2  C_0' e^{k\varepsilon}e^{\varepsilon m}  \|D_{f^m(x)}f \|\ \le\ 2  C_0' e^{k\varepsilon}e^{\varepsilon m} L,
\end{eqnarray*}
where  $L=\max_{z\in M} \|D_zf\|$. 

By applying the exponential map at $x$, one may assume without loss
of generality that the analysis is conducted locally in $\mathbb{R}^{\mathsf{d}}$.  
For each point of Pesin set, the Lyapunov metric 
can be ``translated" to a small neighborhood on the tangent space. More specifically, for any  
$x\in \widetilde\Lambda_{a,b}$, $m\ge 0$, and a sufficiently small neighborhood $U$ of $f^{m}x$, the
tangent bundle over $U$ 
can be trivialized
by identifying $T_UM\equiv U\times
\mathbb{R}^{\mathsf{d}}$. Under this identification, for each $y\in U$, any  vector $v\in T_yM$  can be translated  to the corresponding
vector $\bar{v}\in T_{f^{m}x}M$.
A new
norm $\|\cdot\|''$ is defined by setting  $\|v\|''_y=\|\bar{v}\|'_{(x,m)}$, which agrees with $\|\cdot\|'_{(x,m)}$ on $T_{f^mx}M$. Let $d''$ denote the distance induced by $\|\cdot\|''$. 
By the H\"older continuity of $Df$, for any  $x\in\tilde\Lambda_{a,b,k}$ and any $y$ in a sufficiently small neighborhood of $x,$ one has
\begin{eqnarray*}
\|D_xf-D_yf\|''&\le& 2C_0'e^{k\vep}\|D_xf-D_yf\|\\
&\le&2C_0'e^{k\vep}C_0d^{\alpha}(x,y)\ \le\ C_0''e^{k\vep}d''^{\alpha}(x,y),  \end{eqnarray*}
where $C_0''=C_0'C_0 2^{\alpha+1}.$
Therefore, if  $d(x,y)\le e^{-\frac{3k\vep}{\alpha}}$,  
then 
\begin{eqnarray*}
 d''(x,y)\le C_0'e^{k\vep}d(x,y)\le C_0'e^{-\frac{2k\vep}{\alpha}},  
\end{eqnarray*}
and hence 
\begin{eqnarray}\label{holder_2norm}
\|D_xf-D_yf\|''&\le&  C_0''e^{k\vep}d''^{\alpha}(x,y)\nonumber\\
&=&C_0''e^{k\vep}(d''(x,y))^{\alpha/2}(d''(x,y))^{\alpha/2}\nonumber\\[2mm]&\le& C_0''( C_0')^{\alpha/2}(d''(x,y))^{\alpha/2}:=\tilde{C}_0(d''(x,y))^{\alpha/2}.
\end{eqnarray}
Therefore, the H\"older continuity remains valid under the norm  $\|\cdot\|''$, with H\"older exponent $\alpha/2$ and a uniform prefactor $\tilde C_0$ inside small neighborhoods. 
\medskip

To characterize the local behaviors of $f$ near regular points (i.e., the points in a suitable set   $\tilde\Lambda_{a,b}$), a standard method is to ``project" the Lyapunov metric from the tangent space onto a small neighborhood of the point on the manifold $M$.  
Such neighborhoods, with size 
decreasing at a rate not exceeding $e^{-\varepsilon/\alpha}$
along the orbits, are often referred to as Lyapunov neighborhoods in literature.

As this paper mainly concerns differentiable maps with degeneracy,  additional considerations are  required to build such
Lyapunov neighborhoods. 

\begin{Lem}\label{lem:non-degenerate-neighborhood}
There exists $r_k^\prime=r_0^\prime e^{-k\varepsilon/\alpha}>0,$ where $0<r_0^\prime<1$ is a constant independent of $k$, such that for any $x\in\widetilde\Lambda_{a,b,k}$ and $m\geq0$, 
$$\sigma(D_yf)\geq\dfrac{1}{2}e^{-k\varepsilon}e^{-m\varepsilon}>0,\quad\forall y\in B(f^mx,r_k^\prime e^{-m\varepsilon/\alpha}).$$
Consequently,  $f|_{B(f^mx,r_k^\prime e^{-m\varepsilon /\alpha})}$ is a diffeomorphism such that  for any $\rho\in (0,1)$, 
\begin{eqnarray*}
f\big(B(f^mx,  \rho r_k^\prime   e^{-m\varepsilon/\alpha})\big)\supseteq  B( f^{m+1}x, \frac{1}{2}\rho e^{-k\vep}r_k'e^{-2m\vep/\alpha}).
\end{eqnarray*}
\end{Lem}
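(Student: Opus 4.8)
The plan is to derive the lower bound on $\sigma(D_yf)$ from item (iv) of Lemma~\ref{Pesin-set} together with the H\"older continuity of $Df$, and then deduce the diffeomorphism property and the image inclusion by a routine fixed-point/openness argument. First I would fix $x\in\widetilde\Lambda_{a,b,k}$ and $m\ge 0$. By (iv) of Lemma~\ref{Pesin-set}, $\sigma(D_{f^mx}f)\ge e^{-k\varepsilon}e^{-m\varepsilon}$. For $y$ close to $f^mx$, the co-norm is $\tfrac12$-Lipschitz-controlled by the operator norm difference: $\sigma(D_yf)\ge \sigma(D_{f^mx}f)-\|D_yf-D_{f^mx}f\|$. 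Using the (Euclidean) H\"older estimate $\|D_{f^mx}f-D_yf\|\le C_0\,d(f^mx,y)^\alpha$, it suffices to require $C_0\,d(f^mx,y)^\alpha\le \tfrac12 e^{-k\varepsilon}e^{-m\varepsilon}$, i.e. $d(f^mx,y)\le (2C_0)^{-1/\alpha}e^{-k\varepsilon/\alpha}e^{-m\varepsilon/\alpha}$. Setting $r_0':=\min\{(2C_0)^{-1/\alpha},\tfrac12\}$ (or any smaller constant ensuring the exponential map is a chart and the various earlier smallness constraints hold) and $r_k':=r_0'e^{-k\varepsilon/\alpha}$, we get $\sigma(D_yf)\ge \tfrac12 e^{-k\varepsilon}e^{-m\varepsilon}>0$ for all $y\in B(f^mx,r_k'e^{-m\varepsilon/\alpha})$, which is the first assertion.

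Next I would argue that $f$ restricted to this ball is a diffeomorphism onto its image. Since $\sigma(D_yf)>0$ on the ball, $D_yf$ is invertible everywhere there, so $f$ is a local diffeomorphism; injectivity on a sufficiently small ball follows from the standard estimate that a $C^1$ map with $D_yf$ uniformly bounded below in co-norm and with $D$ varying by less than that bound (which holds after possibly shrinking $r_0'$, using the H\"older continuity of $Df$ again) is injective — concretely, for $y_1,y_2$ in the ball one writes $f(y_1)-f(y_2)=\int_0^1 D_{y_2+t(y_1-y_2)}f\,(y_1-y_2)\,dt$ and estimates the integrand below by $\tfrac12 e^{-k\varepsilon}e^{-m\varepsilon}-\tilde{C}_0 r_k'^{\alpha}\ge \tfrac14 e^{-k\varepsilon}e^{-m\varepsilon}$ once $r_0'$ is small enough. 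Hence $f|_{B(f^mx,r_k'e^{-m\varepsilon/\alpha})}$ is a diffeomorphism onto an open set.

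For the image inclusion, fix $\rho\in(0,1)$ and consider the ball $B(f^mx,\rho r_k'e^{-m\varepsilon/\alpha})$. A point $w\in B(f^{m+1}x,R)$ lies in $f$ of this ball provided $R$ is chosen so small that the Newton-type/contraction-mapping argument converges inside the ball: the map $y\mapsto y+(D_{f^mx}f)^{-1}(w-f(y))$, or more simply a degree/openness argument, shows $f\big(B(f^mx,\rho r_k'e^{-m\varepsilon/\alpha})\big)$ contains a ball around $f^{m+1}x$ of radius at least $\big(\inf_y\sigma(D_yf)\big)\cdot\rho r_k'e^{-m\varepsilon/\alpha}$ up to the customary factor, and with $\inf_y\sigma(D_yf)\ge \tfrac12 e^{-k\varepsilon}e^{-m\varepsilon}$ and $r_k'=r_0'e^{-k\varepsilon/\alpha}$ this gives radius $\ge \tfrac12\rho e^{-k\varepsilon}r_k'e^{-2m\varepsilon/\alpha}$ (after absorbing the $e^{-m\varepsilon}\cdot e^{-m\varepsilon/\alpha}$ into $e^{-2m\varepsilon/\alpha}$, valid since $\alpha<1$), which is exactly the claimed inclusion. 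I would make this precise via the standard lemma: if $g:B(0,\delta)\to\mathbb{R}^{\mathsf d}$ is $C^1$ with $g(0)=0$, $D g(0)$ invertible, and $\|Dg(z)-Dg(0)\|\le \tfrac12\sigma(Dg(0))$ on $B(0,\delta)$, then $g(B(0,\delta))\supseteq B(0,\tfrac12\sigma(Dg(0))\delta)$.

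The main obstacle is bookkeeping the constants: one must verify that a single choice of $r_0'$ (independent of $k$ and $m$) simultaneously (a) keeps $y$ within the range where the exponential-chart and the H\"older estimate \eqref{holder_2norm} are valid, (b) makes the co-norm drop by at most a factor $2$, and (c) makes $\|Dg(z)-Dg(0)\|\le \tfrac12\sigma(Dg(0))$ so that both injectivity and the openness lemma apply — all while the relevant bounds scale like $e^{-k\varepsilon}e^{-m\varepsilon}$ and the neighborhood radius scales like $e^{-k\varepsilon/\alpha}e^{-m\varepsilon/\alpha}$, so that the H\"older term $r_k'^{\alpha}\sim e^{-k\varepsilon}e^{-m\varepsilon}$ matches the co-norm lower bound with the right exponents. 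The inequality $\alpha<1$ is what lets $e^{-m\varepsilon/\alpha}\le e^{-m\varepsilon}$ be used to convert the product $e^{-m\varepsilon}\cdot e^{-m\varepsilon/\alpha}$ into $e^{-2m\varepsilon/\alpha}$ in the final inclusion; keeping that exponent arithmetic straight is the only genuinely delicate point, and everything else is the routine quantitative inverse function theorem.
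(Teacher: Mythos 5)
Your proof is correct and follows essentially the same route as the paper: the co-norm lower bound is obtained exactly as in the paper's proof, by combining Lemma~\ref{Pesin-set}(iv) with the H\"older continuity of $Df$ and choosing $r_0'$ of the form $(1/2K)^{1/\alpha}$ so that the perturbation term is at most half the co-norm at $f^mx$. The paper's proof stops there and treats the diffeomorphism property and the image inclusion as standard consequences; your additional verification via the quantitative inverse function theorem, including the exponent check that $e^{-m\varepsilon}e^{-m\varepsilon/\alpha}\ge e^{-2m\varepsilon/\alpha}$ because $\alpha<1$, correctly fills in what the paper leaves implicit.
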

\begin{proof}
By H\"{o}lder continuity of $Df,$ there exists $K>1$  such that for any $x,y$ close enough, 
\begin{eqnarray}\label{holder-ineq}
\|D_xf-D_yf\|\leq Kd(x,y)^\alpha.
\end{eqnarray}
Thus, for any $y\in M$ satisfying $d(y,f^mx)\leq(\dfrac{1}{2K}e^{-k\varepsilon}e^{-m\varepsilon})^{1/\alpha},$ it holds that
\begin{eqnarray*}
\sigma(D_yf)&\geq&\sigma(D_{f^mx}f)-Kd(f^mx,y)^\alpha\ \ \ \ \ {\text {by~(\ref{holder-ineq})}}\\
       &\geq&e^{-k\varepsilon}e^{-m\varepsilon}-Kd(f^mx,y)^{\alpha}\ \ \ \ \ \ \ {\text {by Lemma~\ref{Pesin-set}(iv)}}\\
       &\geq&\dfrac{1}{2}e^{-k\varepsilon}e^{-m\varepsilon}.
\end{eqnarray*}
The lemma is concluded by letting $r_0'=({1}/{2K})^{1/\alpha}$.
\end{proof}

\subsection{Measurable partitions subordinate to  stable manifolds}
In the study of dynamical behaviors 
of $C^{1+\alpha}$ diffeomorphism 
in the non-uniformly hyperbolic theory 
a fundamental result  
is the stable manifold theorem \cite{Pesin, Ruelle79}, which is typically established using the method  of graph transformation (see, for instance, \textsection 2 of \cite{HPS}). In the present setting, however, the map $f$ may be non-invertible and thus differs from a diffeomorphism. To address this, we apply the graph transformation technique to the inverse branches of $f$ in the backward directions.

The following lemma quantifies the Lyapunov neighborhoods  on which $f^{-1}$ behaves uniformly under the norm $\|\cdot\|''$. 

\begin{Lem}\label{prop:f-under-local-chart}
For any $x\in\widetilde\Lambda_{a,b,k}$ and any $m\geq0,$
denote $A_{x,m}= (D_{f^{m}x}f)^{-1}.$
Then
\begin{eqnarray*}
&&	\| A_{x,m}v\|''\geq e^{-a-2\vep}\|v\|'',\quad \forall v\in E_m(x),\\
&&	\| A_{x,m} v\|'' \leq e^{-b+2\vep}\|v\|'', \quad \forall v\in H_m(x).
	\end{eqnarray*}
Moreover, for any small $\delta>0$, there exists $\tilde{r}_k=\tilde{r}_0 e^{-30k\varepsilon/\alpha}>0,$ where $0<\tilde{r}_0<1$  depends on $\delta$ but  independent of $x$ and $k$, such that, if we denote
$\tilde f_{(x,m)}=(f\mid_{B(f^mx, \tilde r_ke^{-m\varepsilon})})^{-1}$, it holds that for any  $y\in   B( f^{m+1}x, \tilde{r}_k e^{-30(m+1)\vep/\alpha})$, 
\begin{eqnarray*}
	\| D_y\tilde f_{(x,m)} - A_{x,m}\|''<\delta.
\end{eqnarray*}
\end{Lem}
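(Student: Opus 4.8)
The plan is to split the statement into two independent parts: the linear estimates at the point itself, and the $C^0$-closeness of the nonlinear inverse branch to the linear inverse inside a suitably small Lyapunov neighborhood. For the linear part, observe that $A_{x,m}=(D_{f^mx}f)^{-1}$ maps $E_{m+1}(x)$ to $E_m(x)$ and $H_{m+1}(x)$ to $H_m(x)$ (by the very definition $E_{m+1}(x)=Df(E_m(x))$, etc.), so the two inequalities are precisely the statements of Proposition \ref{prop:new_norm}(i)--(ii) read backwards: from $\|D_{f^mx}fv\|'_{(x,m+1)}\le e^{a+2\varepsilon}\|v\|'_{(x,m)}$ for $v\in E_m(x)$, substituting $v=A_{x,m}w$ with $w\in E_{m+1}(x)$ gives $\|w\|'_{(x,m+1)}\le e^{a+2\varepsilon}\|A_{x,m}w\|'_{(x,m+1)}$, i.e. $\|A_{x,m}w\|''\ge e^{-a-2\varepsilon}\|w\|''$; and symmetrically for $H$ using (ii). Here I use that $\|\cdot\|''$ agrees with $\|\cdot\|'_{(x,m)}$ on $T_{f^mx}M$ and with $\|\cdot\|'_{(x,m+1)}$ on $T_{f^{m+1}x}M$, since the $\|\cdot\|''$ norm is defined by parallel-translating to the relevant base point.

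For the nonlinear part, the idea is the standard ``derivative of the inverse is close to the inverse of the derivative provided the derivative varies slowly'' estimate, but carried out in the Lyapunov metric $\|\cdot\|''$ so that all constants are uniform. First, by Lemma \ref{lem:non-degenerate-neighborhood}, choosing the neighborhood radius to be a small enough multiple of $r_k'e^{-m\varepsilon/\alpha}$ ensures $\sigma(D_zf)\ge\tfrac12 e^{-k\varepsilon}e^{-m\varepsilon}>0$ for all $z$ in $B(f^mx,\tilde r_k e^{-m\varepsilon})$, so $\tilde f_{(x,m)}=(f|_{B(f^mx,\tilde r_ke^{-m\varepsilon})})^{-1}$ is a genuine diffeomorphism onto its image and its image contains $B(f^{m+1}x,\tilde r_k e^{-30(m+1)\varepsilon/\alpha})$ once $\tilde r_0$ is small and the exponent $30$ is chosen generously (this is where the factor $e^{-30k\varepsilon/\alpha}$ in $\tilde r_k$ and the $e^{-30(m+1)\varepsilon/\alpha}$ in the codomain come from — one needs enough room to absorb the $e^{k\varepsilon}$, $e^{m\varepsilon}$ losses from Lemma \ref{lem:non-degenerate-neighborhood} and from Proposition \ref{prop:new_norm}(iii) when converting between $\|\cdot\|$ and $\|\cdot\|''$). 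Then for $y$ in that ball, writing $z=\tilde f_{(x,m)}(y)\in B(f^mx,\tilde r_k e^{-m\varepsilon})$, the chain rule gives $D_y\tilde f_{(x,m)}=(D_zf)^{-1}$, so
\[
D_y\tilde f_{(x,m)}-A_{x,m}=(D_zf)^{-1}-(D_{f^mx}f)^{-1}=(D_zf)^{-1}\bigl(D_{f^mx}f-D_zf\bigr)(D_{f^mx}f)^{-1}.
\]
Taking $\|\cdot\|''$-operator norms, $\|(D_zf)^{-1}\|''$ is controlled by the co-norm bound $\sigma(D_zf)\ge\tfrac12 e^{-k\varepsilon}e^{-m\varepsilon}$ combined with the two-sided comparison in Proposition \ref{prop:new_norm}(iii), giving a bound of the shape $C e^{Ck\varepsilon}e^{Cm\varepsilon}$; the factor $\|D_{f^mx}f-D_zf\|''$ is bounded via the Hölder estimate \eqref{holder_2norm} by $\tilde C_0\,(d''(f^mx,z))^{\alpha/2}$, which is $\le\tilde C_0 (C_0' e^{k\varepsilon}e^{m\varepsilon}\cdot\tilde r_k e^{-m\varepsilon})^{\alpha/2}$ by Proposition \ref{prop:new_norm}(iii) again, i.e. $\le \tilde C_0 (C_0')^{\alpha/2}e^{k\varepsilon\alpha/2}\tilde r_k^{\alpha/2}$; and $\|A_{x,m}\|''\le e^{-b+2\varepsilon}+e^{-a+2\varepsilon}$ or simply a uniform constant from the linear estimates already proved (splitting a vector into its $E$- and $H$-components, which costs only the angle bound Lemma \ref{Pesin-set}(iii), again absorbed into $e^{Ck\varepsilon}$). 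Multiplying these three factors, the $\tilde r_k^{\alpha/2}=\tilde r_0^{\alpha/2}e^{-15k\varepsilon}$ term beats all the accumulated $e^{Ck\varepsilon}$ factors for suitable constant choices, so by first fixing the exponents and then choosing $\tilde r_0=\tilde r_0(\delta)$ small enough, the product is $<\delta$ uniformly in $x$, $k$ and $m$.

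The main obstacle is bookkeeping the exponential-in-$k$-and-$m$ factors correctly: every conversion between the Riemannian norm and the Lyapunov norm $\|\cdot\|''$ (in Proposition \ref{prop:new_norm}(iii)), every use of the degenerate co-norm lower bound (Lemma \ref{lem:non-degenerate-neighborhood}, Lemma \ref{Pesin-set}(iv)), and the angle bound (Lemma \ref{Pesin-set}(iii)) contributes a multiplicative $e^{O(k\varepsilon)}e^{O(m\varepsilon)}$, and the key point is that the Hölder gain $\tilde r_k^{\alpha/2}$ — thanks to the generous exponent $30$ built into $\tilde r_k=\tilde r_0 e^{-30k\varepsilon/\alpha}$ — carries a factor $e^{-15k\varepsilon}$ that dominates all of them, while the $m$-dependence cancels exactly because the neighborhood radius $\tilde r_k e^{-m\varepsilon}$ (respectively $\tilde r_k e^{-30(m+1)\varepsilon/\alpha}$) was scaled by precisely the right power of $e^{-m\varepsilon}$. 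Once one is careful that the ``$\delta$'' enters only through $\tilde r_0$ and not through the exponents, and that $\tilde r_0$ does not depend on $x$, $k$, or $m$, the estimate closes.
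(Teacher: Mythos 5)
Your proposal follows essentially the same route as the paper: the two linear inequalities are read off from Proposition \ref{prop:new_norm}(i)--(ii), and the nonlinear estimate uses the same identity $D_y\tilde f_{(x,m)}-A_{x,m}=(D_{\tilde y}f)^{-1}(D_{f^mx}f-D_{\tilde y}f)(D_{f^mx}f)^{-1}$ with the co-norm bounds of Lemmas \ref{Pesin-set}(iv) and \ref{lem:non-degenerate-neighborhood} controlling the two inverse factors, H\"older continuity of $Df$ controlling the middle factor, and the generous exponent in $\tilde r_k$ absorbing all $e^{O(k\varepsilon)}e^{O(m\varepsilon)}$ losses. The only slip is attributing the bound on $\|A_{x,m}\|''$ to the linear estimates (which give only a lower bound on its expansion of $E$-vectors) or to a uniform constant; the correct bound is $\|A_{x,m}\|\le 1/\sigma(D_{f^mx}f)\le e^{(k+m)\varepsilon}$ from Lemma \ref{Pesin-set}(iv), a factor your bookkeeping already accommodates, so the argument still closes.
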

\begin{proof}
The first two inequalities directly follows from  Proposition \ref{prop:new_norm} (i)-(ii). It remains  to prove the last inequality. 
Denote  
 $\tilde{y}=\tilde f_{(x,m)} (y)$ for $y\in B( f^{m+1}x, \tilde{r}_k e^{-30(m+1)\vep/\alpha}).$ Then 
\begin{eqnarray*}
&&	\| D_y\tilde f_{(x,m)} - A_{x,m}\|\\[2mm]
&=&  	\| D_{y}(f\mid_{B(f^mx, \tilde r_ke^{-m\varepsilon})})^{-1}\circ (D_{\tilde{y}}f- D_{f^mx}f)\circ D_{f^{m+1}x}(f\mid_{B(f^mx, \tilde r_ke^{-m\varepsilon})})^{-1}\|\\[2mm]
	&\le &	\| D_{y}(f\mid_{B(f^mx, \tilde r_ke^{-m\varepsilon})})^{-1}\| \cdot \| D_{\tilde{y}}f- D_{f^mx}f\| \cdot\| D_{f^{m+1}x}(f\mid_{B(f^mx, \tilde r_ke^{-m\varepsilon})})^{-1}\|\\[2mm]	
	&= &  \frac{1}{\sigma(D_{\tilde{y}}f)}\cdot  \| D_{\tilde{y}}f- D_{f^mx}f\| \cdot \frac{1}{\sigma(D_{f^{m}x})}\\[2mm]
	&\le & 2 e^{2k\varepsilon}e^{2m\varepsilon} K d(\tilde{y}, f^mx)^{\alpha},
\end{eqnarray*}
where the last inequality is by Lemma \ref{lem:non-degenerate-neighborhood}. It  then follows from Proposition \ref{prop:new_norm} (iii) that 
\begin{eqnarray*}
	\| D_y\tilde f_{(x,m)} - A_{x,m}\|''
	\le     2C_0e^{k\varepsilon}e^{\varepsilon m} e^{2k\varepsilon}e^{2m\varepsilon}\cdot K d(\tilde{y}, f^mx)^{\alpha}.
\end{eqnarray*}

Take $ \tilde{r}_0=\frac{1}{2} (\frac{\delta}{2C_0K})^{\frac{1}{\alpha}}$ and $\tilde{r}_k=\tilde{r}_0 e^{-30k\varepsilon/\alpha}$. Since  $y\in B( f^{m+1}x, \tilde{r}_k e^{-30(m+1)\vep/\alpha})$, it follows from Lemma \ref{lem:non-degenerate-neighborhood} that $$ K d(\tilde{y}, f^mx)^{\alpha} <  \frac{\delta}{  2C_0e^{3k\varepsilon}e^{4m\varepsilon}},$$  which implies  \begin{eqnarray*}
	\| D_y\tilde f_{(x,m)} - A_{x,m}\|''<\delta.
\end{eqnarray*}
\end{proof}

By taking local charts, one may identify $E_m(y)=E_m(x)$ and $H_m(y)=H_m(x)$ for all $y$ in a small neighborhood of $x$. By Lemma \ref{prop:f-under-local-chart}, for  any small $\delta>0$, the following 
estimates hold for any $y\in   B( f^{m+1}x, \tilde{r}_k e^{-30(m+1)\vep/\alpha})$:
\begin{eqnarray*}
&&\| D_y\tilde f_{(x,m)}v\|''\geq e^{-a-3\vep}\|v\|'',\quad \forall v\in E_m(y),\\
&&\| D_y\tilde f_{(x,m)} v\|'' \leq e^{-b+3\vep}\|v\|'', \quad \forall v\in H_m(y).
\end{eqnarray*}

Given a splitting $F=F_1\oplus F_2$ of a Euclidean space $F$ with norm $\|\cdot\|$, and a parameter $\xi>0$,  denote by $Q_{\|\cdot\|}(F_1,\xi)$ the cone of width $\xi$ of $F_1$ with respect to $\|\cdot\|$, that is,
\[Q_{\|\cdot\|}(F_1,\xi)=\{v=v_1+v_2\in F:\,v_{1}\in F_1,\,v_2\in F_2,\ \|v_{2}\|\le \xi \|v_{1}\|\}.\] 

\begin{Lem}\label{cone} 
For any $\xi>0$, 	there exists  $\delta>0$ such that for any $x\in\widetilde\Lambda_{a,b,k}$,   $m\geq0,$  and $y\in   B( f^{m+1}x, \tilde{r}_k e^{-30(m+1)\vep/\alpha})$, \begin{eqnarray*} 
D_y\tilde{f}_{(x,m)}(Q_{\|\cdot\|_{y}''}(E_y, \xi))\subset   Q_{\|\cdot\|_{\tilde{f}_{(x,m)}(y)}''}(E_{\tilde{f}_{(x,m)}(y)}, \xi).  \end{eqnarray*}
	\end{Lem}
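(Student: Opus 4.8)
The plan is to run a standard invariant-cone argument for the inverse branches, exploiting that the linear part $A_{x,m}=(D_{f^mx}f)^{-1}$ exactly preserves the splitting $E\oplus H$ and is hyperbolic for the Lyapunov norms (Lemma \ref{prop:f-under-local-chart}), while the actual derivative $D_y\tilde f_{(x,m)}$ differs from $A_{x,m}$ by at most $\delta$ in the $\|\cdot\|''$-norm, with $\delta$ at our disposal through the choice of radius in the same lemma. The structural fact I will use repeatedly is that, directly from the definition of $\langle\cdot,\cdot\rangle'_{(x,m)}$, the subspaces $E_m$ and $H_m$ are orthogonal for $\|\cdot\|''$, so the projections $P_E,P_H$ onto the two factors are norm-nonincreasing; this prevents the cone estimates from picking up angle-dependent constants.

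First I would fix the constants in the correct (acyclic) order. Given $\xi>0$, set $\kappa:=e^{-a-2\varepsilon}-e^{-b+2\varepsilon}$, which is positive since $a<b$ and $\varepsilon$ is small, and choose $\delta=\delta(\xi,a,b,\varepsilon)>0$ small enough that $\delta(1+\xi)^2\le\tfrac12\xi\kappa$; in particular $\delta(1+\xi)<e^{-a-2\varepsilon}$. For this $\delta$, Lemma \ref{prop:f-under-local-chart} produces $\tilde r_k=\tilde r_0e^{-30k\varepsilon/\alpha}$ with $\|D_y\tilde f_{(x,m)}-A_{x,m}\|''<\delta$ for all $x\in\widetilde\Lambda_{a,b,k}$, $m\ge0$ and $y\in B(f^{m+1}x,\tilde r_ke^{-30(m+1)\varepsilon/\alpha})$ — this is the $\tilde r_k$ appearing in the statement. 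Passing to the local charts at $f^{m+1}x$ and $f^mx$, in which the splittings and the norm $\|\cdot\|''$ are constant (identified with those at the base points), write $D:=D_y\tilde f_{(x,m)}$ and $A:=A_{x,m}$.

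The core computation is then as follows. Let $v=v_E+v_H$ with $v_E\in E$, $v_H\in H$ and $\|v_H\|''\le\xi\|v_E\|''$; by scale invariance of cones we may assume $v_E\ne0$ and normalize $\|v_E\|''=1$, so $\|v_H\|''\le\xi$ and $\|v\|''\le1+\xi$. Decompose $Dv=Av+(D-A)v=(Av_E+Av_H)+(D-A)v$, where $Av_E\in E$ and $Av_H\in H$ since $A$ respects the splitting. Writing $(D-A)v=P_E(D-A)v+P_H(D-A)v$, the $E$- and $H$-parts of $Dv$ are $w_E=Av_E+P_E(D-A)v$ and $w_H=Av_H+P_H(D-A)v$. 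Combining Lemma \ref{prop:f-under-local-chart}'s hyperbolicity bounds $\|Av_E\|''\ge e^{-a-2\varepsilon}$ and $\|Av_H\|''\le e^{-b+2\varepsilon}\xi$ with $\|(D-A)v\|''\le\delta\|v\|''\le\delta(1+\xi)$ and $\|P_E\|'',\|P_H\|''\le1$, I obtain
\[\|w_E\|''\ge e^{-a-2\varepsilon}-\delta(1+\xi)>0,\qquad \|w_H\|''\le e^{-b+2\varepsilon}\xi+\delta(1+\xi).\]
The inclusion $Dv\in Q_{\|\cdot\|''}(E,\xi)$ is exactly $\|w_H\|''\le\xi\|w_E\|''$, and substituting the two bounds this reduces to $\delta(1+\xi)^2\le\xi\kappa$, which holds by the choice of $\delta$. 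Since $v$ was arbitrary in $Q_{\|\cdot\|''_y}(E_y,\xi)$, this gives $D_y\tilde f_{(x,m)}(Q_{\|\cdot\|''_y}(E_y,\xi))\subset Q_{\|\cdot\|''_{\tilde f_{(x,m)}(y)}}(E_{\tilde f_{(x,m)}(y)},\xi)$.

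This is mostly bookkeeping, and I do not anticipate a genuine obstacle; the two points needing care are keeping the logical order acyclic ($\xi$ determines $\delta$, and $\delta$ then determines $\tilde r_k$ through Lemma \ref{prop:f-under-local-chart}, which delivers a radius for \emph{any} prescribed $\delta$) and keeping the norms $\|\cdot\|$, $\|\cdot\|'$, $\|\cdot\|''$ and the chart identifications straight, in particular invoking the $\|\cdot\|''$-orthogonality of $E_m\oplus H_m$. The analytic difficulty specific to the degeneracy of $f$ — that $\|D_y\tilde f_{(x,m)}\|''$ and $\|D_y\tilde f_{(x,m)}-A_{x,m}\|''$ could a priori blow up near $\Sigma_f$ — has already been quarantined into the shrinking radii $r_k'$ and $\tilde r_k$ by Lemmas \ref{lem:non-degenerate-neighborhood} and \ref{prop:f-under-local-chart}, so it does not reappear here. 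One can moreover read off from the same estimate that the cone width is strictly contracted, which is what will be needed when the graph transform is applied to the branches $\tilde f_{(x,m)}$ to build the local stable manifolds.
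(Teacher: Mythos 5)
Your proposal is correct and follows essentially the same route as the paper's proof: decompose $D_y\tilde f_{(x,m)}v$ into the exactly-splitting-preserving linear part $A_{x,m}v$ plus the perturbation $(D_y\tilde f_{(x,m)}-A_{x,m})v$ of $\|\cdot\|''$-norm at most $\delta(1+\xi)\|v_E\|''$, bound the $E$- and $H$-components using the hyperbolicity estimates of Lemma \ref{prop:f-under-local-chart}, and choose $\delta$ so the resulting ratio stays below $\xi$. The only (harmless) difference is that you make explicit the $\|\cdot\|''$-orthogonality of $E_m\oplus H_m$ to justify that the projections are norm-nonincreasing, and you record the threshold $\delta(1+\xi)^2\le\tfrac12\xi\kappa$ explicitly rather than leaving it as ``$\delta$ sufficiently small.''
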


\begin{proof} 
Let $v=v_1+v_2 \in Q_{\|\cdot\|_{y}''}(E_y, \xi)$ with $v_1\in E_m(y)$, $v_2\in H_m(y)$. Then
\begin{eqnarray*}\| A_{x,m}v_1\|''\geq e^{-a-2\vep}\|v_1\|'',\quad
	\| A_{x,m} v_2\|'' \leq e^{-b+2\vep}\|v_2\|''.
    \end{eqnarray*}
Denote $w= (D_y\tilde f_{(x,m)} - A_{x,m})v$. By Lemma \ref{prop:f-under-local-chart}, it holds that 
\begin{eqnarray*}
\| w\|''\le \delta \|v\|'' \le \delta(1+\xi)\|v_1\|''.  
\end{eqnarray*}	
By the invariance of  $E_m(x)$ and $H_m(x)$ under $A_{x,m}$,  write
\begin{eqnarray*}    D_y\tilde f_{(x,m)}v =  A_{x,m}v +w=  (A_{x,m}v_1+w_1)+( A_{x,m}v_2+w_2).   \end{eqnarray*}
where $w_1\in E_m(x)$, $w_2\in H_m(x).$
It holds that  
\begin{eqnarray*}     &&\|A_{x,m}v_2+w_2\|''\le     e^{-b+2\vep} \|v_2\|''+  \delta (1+\xi)  \|v_1\|'',\\
&&\|A_{x,m}v_1+w_1\|'' \ge     e^{-a-2\vep} \|v_1\|''-  \delta (1+\xi)  \|v_1\|''.
\end{eqnarray*}
Therefore, 
	\begin{eqnarray*}    \frac{\|A_{x,m}v_2+w_2\|''}{\|A_{x,m}v_1+w_1\|''}  \le     \frac{ e^{-b+2\vep} }{ e^{-a-2\vep}- \delta (1+\xi)} \xi+   \frac{ \delta (1+\xi) }{ e^{-a-2\vep}- \delta (1+\xi)}.
	\end{eqnarray*}
	By choosing $\delta>0$ sufficiently small, the right-hand  side above is less than $\xi$. This completes the proof. 
\end{proof}

\begin{Thm}[Stable manifold theorem for $C^{1+\alpha}$ maps]\label{lem:local_manifolds}
There exist $\lambda', \tilde{\alpha}\in(0,1)$ and  $\delta>0$ such that for any $x\in\tilde\Lambda_{a,b,k},$ there exists a   sequence of embedded submanifolds $W^s_{\loc}(f^mx)\subseteq B(f^mx, \tilde{r}_k e^{-30m\vep/\alpha})$, $m=0, 1,\cdots$,   such that
\begin{itemize}\item[(i)] 
$W^s_{\loc}(f^mx)$ contains a ball with radius $\frac{1}{2}\tilde{r}_k e^{-30m\vep/\alpha} $ with respect to $d''_{W^s}$ in $W^s_{\loc}(f^mx)$; 
\item[(ii)] 
 $\|Df\mid_{W^s_{\loc}(f^mx)} \|''\le  \lambda'$;
\item[(iii)]  
$f(W^s_{\loc}(f^mx))\subset W^s_{\loc}(f^{m+1}x)$;
\item[(iv)]$W^s_{\loc}(x)$ is continuous with respect to  $x\in \tilde \Lambda_{a,b,k}$;
\item[(v)] 
$Df\mid_{W^s_{\loc}(f^mx)}$, $\det(Df\mid_{W^s_{\loc} (f^mx)})$ and $TW^s_{\loc}(f^mx)$ are uniformly  $\tilde{\alpha}$-H\"older continuous  with respect to $\|\cdot\|''$, i.e.,  the coefficient $\tilde{C}_0'$ is  independent of $x$ and $m$. 
\end{itemize}
\end{Thm}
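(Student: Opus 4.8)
The plan is to construct the local stable manifolds $W^s_{\loc}(f^m x)$ simultaneously for all $m \ge 0$ as the graphs of maps $\phi_{x,m} \colon E_m(x) \to H_m(x)$, obtained via the graph transform applied to the inverse branches $\tilde f_{(x,m)}$ established in Lemma \ref{prop:f-under-local-chart}. Concretely, fix $x \in \widetilde\Lambda_{a,b,k}$ and work in the charts identifying a neighborhood of $f^m x$ with a ball in $T_{f^m x}M \equiv E_m(x) \oplus H_m(x)$ equipped with the norm $\|\cdot\|''_{(x,m)}$. The target manifolds are built as a fixed point of the operator that sends a sequence of graphs $(\Gamma_m)_{m\ge 0}$, each over the ball of radius $\sim \tilde r_k e^{-30m\vep/\alpha}$ in $E_m(x)$ with slope $\le \xi$, to the sequence $(\Gamma'_m)$ where $\Gamma'_m$ is the image $\tilde f_{(x,m)}(\Gamma_{m+1})$ cut down to the appropriate ball. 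Lemma \ref{cone} guarantees that this image stays inside the cone $Q_{\|\cdot\|''}(E_\cdot, \xi)$, so the graph property is preserved; the contraction/expansion rates $e^{-a-3\vep}$ on $E$ and $e^{-b+3\vep}$ on $H$ (with $a<0<b$ after the relevant sign conventions, so these are genuinely $<1$ and $>1$) show that the graph transform is a contraction in the $C^0$-norm on the space of such graph sequences, whence a unique fixed point $(\phi_{x,m})$; set $W^s_{\loc}(f^m x) = \mathrm{graph}(\phi_{x,m})$. Item (iii), $f(W^s_{\loc}(f^m x)) \subset W^s_{\loc}(f^{m+1}x)$, is then immediate from the fixed-point relation (it is the inverse of $\tilde f_{(x,m)}(W^s_{\loc}(f^{m+1}x)) \supseteq$ the cut-down graph), and item (i), the ball of radius $\tfrac12 \tilde r_k e^{-30m\vep/\alpha}$, follows from the choice of domains in the construction together with the lower bound $\tfrac12\|\cdot\| \le \|\cdot\|''$ from Proposition \ref{prop:new_norm}(iii).

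For item (ii), the bound $\|Df\mid_{W^s_{\loc}(f^m x)}\|'' \le \lambda'$, I would differentiate the fixed-point relation: the tangent space $T_p W^s_{\loc}(f^m x)$ at any $p$ is a graph over $E_m(x)$ of slope $\le \xi$, and $Df$ restricted to it is conjugate (via the graph coordinates) to a perturbation of $Df\mid_{E_m(x)}$, which by Proposition \ref{prop:new_norm}(i) has $\|\cdot\|''$-norm $\le e^{a+2\vep}$; taking $\delta$ and $\xi$ small and absorbing the cone-slope contribution gives $\lambda' := e^{a+3\vep} < 1$ (valid since $a < 0$; one should note here that $W^s_{\loc}$ is built on points of $\widetilde\Lambda_{a,b}$ where $s(x)\ge 1$, i.e. $a$ can be taken negative). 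Item (iv), continuity of $W^s_{\loc}(x)$ in $x \in \widetilde\Lambda_{a,b,k}$, follows because $\widetilde\Lambda_{a,b,k}$ is compact, the splittings $E_m(\cdot), H_m(\cdot)$ and the norms $\|\cdot\|''$ vary continuously (the propositions above), the inverse-branch maps $\tilde f_{(x,m)}$ vary continuously in $x$ in the $C^1$ topology on the relevant balls, and the graph transform depends continuously on these data with a uniform contraction rate — so its fixed point depends continuously on $x$.

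The main obstacle, and item (v), is the $\tilde\alpha$-Hölder regularity (with a \emph{uniform} constant $\tilde C_0'$ independent of $x$ and $m$) of $Df\mid_{W^s_{\loc}}$, of the Jacobian $\det(Df\mid_{W^s_{\loc}})$, and of the tangent distribution $TW^s_{\loc}$. The strategy is the classical one of promoting $C^0$ control of the invariant section to Hölder control: having already established (in \eqref{holder_2norm}) that $Df$ is $(\alpha/2)$-Hölder with a uniform prefactor $\tilde C_0$ in the $\|\cdot\|''$-metric inside Lyapunov neighborhoods, one shows that the bundle of admissible cone-fields and the fiberwise graph transform form a Hölder-continuous family, then runs the fixed-point argument in the space of Hölder sections of tangent planes rather than just continuous ones. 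The key point is a rate comparison: the graph transform contracts the $C^0$-distance between tangent sections at rate $\sim e^{a-b}$ while the Hölder seminorm is amplified by at most $\sim e^{(a-b)\tilde\alpha/2} \cdot e^{(\text{const})\vep}$ along one step, and since the Lyapunov-neighborhood sizes shrink only at rate $e^{-30\vep/\alpha}$ with $\vep$ taken sufficiently small relative to $b-a$, one can choose $\tilde\alpha \in (0,\alpha/2)$ and a single constant $\tilde C_0'$ so that the space of $(\tilde\alpha, \tilde C_0')$-Hölder sections is invariant under the transform; its fixed point is then the sought-after Hölder tangent field, and the Hölder bounds on $Df\mid_{W^s_{\loc}}$ and on $\det(Df\mid_{W^s_{\loc}})$ follow by composing the (uniform) Hölder continuity of $Df$ and of $\det$ with this tangent-field regularity. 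The delicate bookkeeping is making sure the $e^{k\vep}$ and $e^{m\vep}$ factors coming from the Lyapunov metric (Proposition \ref{prop:new_norm}(iii) and Lemma \ref{prop:f-under-local-chart}) are all absorbed by the exponentially small radii $\tilde r_k e^{-30m\vep/\alpha}$, which is exactly why the radius decay exponent was fixed at $30\vep/\alpha$ rather than $\vep/\alpha$ — this slack is what buys the uniform Hölder constant.
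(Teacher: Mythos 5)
Your construction is the same in spirit as the paper's --- a Hadamard--Perron graph transform applied to the inverse branches $\tilde f_{(x,m)}$ in the Lyapunov charts, with Lemma \ref{cone} providing cone invariance --- but you diverge on the one step that carries all the difficulty, namely the uniform $C^{1+\tilde\alpha}$ regularity in (v). The paper does \emph{not} run a H\"older-section fixed-point argument on the shrinking balls. Instead it first upgrades \eqref{holder_2norm} to show each $\tilde f_{(x,m)}$ is $C^{1+\alpha/4}$ with a H\"older coefficient $\hat C_0$ independent of $x,m,k$, then \emph{extends} each $\tilde f_{(x,m)}$ to a global diffeomorphism $\tilde F_{x,m}$ of $\mathbb{R}^{\mathsf d}$ with $\|D\tilde F_{x,m}-A_{x,m}\|''\le 2\delta$ everywhere, and views the sequence as a single map on the disjoint union $\coprod_i\{f^i x\}\times\mathbb{R}^{\mathsf d}$ carrying a uniformly dominated splitting; the uniform $C^{1+\tilde\alpha}$ graphs then come directly from \textsection 5 of \cite{HPS} or Proposition 3.1 of \cite{BW}, and $W^s_{\loc}$ is obtained by cutting down. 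This extension trick is precisely what lets the paper avoid the ``delicate bookkeeping'' you defer: on a globally defined sequence of uniformly $C^{1+\alpha/4}$ diffeomorphisms with uniform domination, the cited results hand you the uniform constant $\tilde C_0'$ for free, whereas on the shrinking domains you must additionally verify that the graph transform is even well defined (that the $\tilde f_{(x,m)}$-image of a graph over the radius-$\tilde r_ke^{-30(m+1)\vep/\alpha}$ ball at level $m+1$ covers the \emph{larger} radius-$\tilde r_ke^{-30m\vep/\alpha}$ ball at level $m$, which uses $e^{-a-3\vep}\ge e^{30\vep/\alpha}$), a point you assert for (i) without checking. Two further cautions: the splitting is only \emph{dominated}, not hyperbolic, under the inverse branches --- since $b$ may equal $0$ (when $\lambda_{s+1}=0$), $e^{-b+2\vep}>1$ and $H$ is not contracted by $A_{x,m}$; your parenthetical ``genuinely $<1$ and $>1$'' is therefore wrong, though harmless, because the graph-transform contraction only needs the ratio $e^{a-b+6\vep}<1$. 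And in your rate comparison for (v), the correct bunching inequality is $(\text{fiber contraction on tangent planes})\cdot(\text{base contraction of }f|_{W^s_{\loc}})^{\tilde\alpha}\approx e^{a-b+c\vep}\cdot(\lambda')^{\tilde\alpha}<1$, which holds for every $\tilde\alpha\in(0,\alpha/2]$ here, rather than the expression you wrote; with that correction your alternative route would also close, at the cost of redoing by hand what \cite{HPS} already provides.
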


\begin{proof} 
Given $x\in\tilde\Lambda_{a,b,k},$ for any $y\in   B( f^{m+1}x, \tilde{r}_k e^{-30(m+1)\vep/\alpha})$,
	\begin{eqnarray*}\| D_y\tilde{f}_{x,m} \|''&=&\sup_{v\neq 0} \frac{\|D_y\tilde{f}_{x,m}(v)\|''}{\|v\|''} =\sup_{w\neq 0} \frac{\|w\|''}{\|D_{f^mx}w\|''}\\[2mm]
		&\le &\sup_{w\neq 0} \frac{ C_0'e^{(k+2m)\vep}  \|w\|}{\frac{1}{2}\|D_{f^mx}w\|}\\[2mm]
		&=&    2 C_0'e^{(k+2m)\vep} \frac{1}{\sigma(D_{f^mx} f)}\\[2mm]
		&\le&    2 C_0'e^{(k+2m)\vep} e^{(k+m)\vep}=  2 C_0'e^{(2k+3m)\vep}
	\end{eqnarray*}
    where the inequalities follow from Proposition \ref{prop:new_norm} and Lemma \ref{Pesin-set}.

    By  \eqref{holder_2norm}, for any $x\in\tilde\Lambda_{a,b,k},$ $Df$ is uniformly  H\"older continuous with respect to $\|\cdot\|''$ in the $e^{-\frac{3(k+m)\vep}{\alpha}}$-neighborhood of $f^mx$, with   H\"older exponent $\alpha/2$ and coefficient $\tilde{C}_0$. Thus, for any $y,z \in B( f^{m+1}x, \tilde{r}_k e^{-30(m+1)\vep/\alpha})$,   
	\begin{eqnarray*}\|D_y\tilde{f}_{x,m}-D_z  \tilde{f}_{x,m}\|''
    &=&\|D_y\tilde{f}_{x,m}\circ(D_{\tilde f_{x,m}(y)}f-D_{\tilde f_{x,m}(z)}{f})\circ D_z  \tilde{f}_{x,m}\|''\\
		&\le & \| D_y\tilde{f}_{x,m} \|'' \| D_{\tilde{f}_{x,m}(y)} f-D_{\tilde{f}_{x,m}(z)} f \|'' \|  D_z  \tilde{f}_{x,m} \|''	\\[2mm] 
		&\le &  (2 C_0'e^{(2k+3m)\vep})^2  \tilde{C}_0  d''(\tilde{f}_{x,m}(y),\tilde{f}_{x,m}(z))^{\frac{\alpha}{2}}	\\[2mm]
		&\le & 	 (2 C_0'e^{(2k+3m)\vep})^2 \tilde{C}_0     (2 C_0'e^{(2k+3m)\vep})^{\frac{\alpha}{2}} d''(y,z)^{\frac{\alpha}{2}}\\[2mm]
		&\le& \hat{C}_0 d''(y,z)^{\frac{\alpha}{4}}\end{eqnarray*}
for some constant $\hat{C}_0$. Therefore, $D\tilde{f}_{x,m}$ is  H\"older continuous in the $\tilde{r}_k e^{-30(m+1)\vep/\alpha}$-neighborhood of $f^{m+1}x$ with respect to the norm $\|\cdot\|''$ , with H\"older exponent $\alpha/4$ and coefficient $\hat{C}_0$. Thus,  $\tilde{f}_{x,m}$ is a $C^{1+\alpha/4}$ local diffeomorphism onto its image, with a uniform H\"older coefficient.  
    
By applying the exponential map, one may assume that the setting is in $\R^{\mathsf d}.$
  Let $\xi$ and $\delta$ be as in Lemma \ref{cone}. For $x\in\widetilde\Lambda_{a,b,k}$ and   $m\geq0$,  extend $\tilde{f}_{x,m}\mid_{B( f^{m+1}x, \tilde{r}_k e^{-30(m+1)\vep/\alpha})}$ to a uniformly  $C^{1+\frac{\alpha}{4}}$ diffeomorphism $\tilde{F}_{x,m}: \mathbb{R}^{\mathsf d} \to \mathbb{R}^{\mathsf d} $ such that
	\begin{itemize}
		\item[-] $\tilde{F}_{x,m}(y)=\tilde{f}_{x,m}(y)$\quad for all $y\in B( f^{m}x, \tilde{r}_k e^{-30m\vep/\alpha})$;\\
		\item[-] $\|D_y\tilde{F}_{x,m}-A_{x,m}\|''\leq 2 \delta$\quad for all $y\in \mathbb{R}^{\mathsf d}$.                  
	\end{itemize}
	By choosing $\delta>0$ sufficiently small,  the conclusions of  Lemma  \ref{cone}  apply to $\tilde{F}_{x,m}$ on $\R^{\mathsf d}$.  

	Consider the disjoint union $\coprod_{i=0}^{+\infty} \{f^i(x)\}\times \mathbb{R}^{\mathsf d}$ with the discrete topology. 
	Note that the global splitting $\coprod_{i=0}^{+\infty} \{f^i(x)\}\times \mathbb{R}^{\mathsf d}=\coprod_{i=0}^{+\infty} \{f^i(x)\}\times (E_{f^i(x)}\oplus H_{f^i(x)})$ is dominated with respect to the extended map $\tilde{F}_{x,m}$.  By \textsection 5  of  \cite{HPS}   or (vi) of Proposition 3.1 of \cite{BW}, one can obtain
	a family $ \{ \mathcal{Y}^{s}_{x,m}:\,m\ge 1\}$ of global invariant   $C^{1+\tilde{\alpha}}$ (for some $\tilde{\alpha}>0$) submanifolds in $\mathbb{R}^{\mathsf d}$ with uniform coefficients, given as $C^{1+\tilde{\alpha}}$ graphs over $H_{f^i(x)}$,  such that for all $m\ge 0$,
	\begin{eqnarray*} &&\{f^m(x)\}\times \mathcal  Y^{s}_{x,m}=\bigcap_{n=m}^{+\infty}  (\tilde{f}_{x,m} \circ \cdots \circ \tilde{f}_{x,n})\left(\{f^{n+1}(x)\}\times Q_{\|\cdot\|_{x,n+1}''}(E_{f^{n+1}(x)}, \xi)\right),\\
    \text{and}\\
		&&\ T_y\mathcal Y^{s}_{x,m} \subset Q_{\|\cdot\|_{x,m}''}(E_{f^m(x)},\xi), \quad \forall y\in \mathcal  Y^{s}_{x,m}.
        \end{eqnarray*}
	In particular, the identity $\tilde{f}_{x,m+1} (\{f^{m+1}(x)\}\times \mathcal Y^{s}_{x, m+1}) =\{f^{m}(x)\}\times \mathcal Y^{s}_{x, m}$ holds.    
    
    Since $\tilde{F}_{x,m}$ coincides with $\tilde{f}_{x,m}$ on $B( f^{m}x, \tilde{r}_k e^{-30m\vep/\alpha})$, which certainly contains the ball at $f^mx$ with radius $\frac 12 \tilde{r}_k e^{-30m\vep/\alpha}$ under $d''_{W^s}.$
    Recalling that $E_m(x)$ depends continuously with respect to $x\in \tilde \Lambda_{a,b,k}$, the proof is concluded with the local stable manifold given by 
    \begin{eqnarray*}
        W^s_{\loc}(f^mx)=\mathcal Y^s_{x,m}\cap B( f^{m}x,   \tilde{r}_k e^{-30m\vep/\alpha}),
    \end{eqnarray*}
    and $\lambda'=e^{a+3\vep}$.  
    Note that  (v) is obtained by the uniform H\"older continuity of $\mathcal Y^s_{x,m}$, with the uniform constants denoted by $\tilde C_0'$.   	
	\end{proof}

Let $\Delta_2=\Delta_1\cap\Delta^\prime$, where (recall) $\Delta^\prime\subseteq M$ is the $\mu$-full measure set on which \eqref{Delta'} holds. 
For $x\in\Delta_2,$ the  stable manifold  of $f$ at $x$ is defined by
\[W^s(x)=\bigcup\nolimits_{n\geq0}(f^{-n}W^s_{\loc}(f^nx)\setminus \Sigma_{f^n}),\]
which is an  immersed submanifold of $M.$  For $x\in I,$ define $W^s(x)=\{x\}.$ 
Since $f$ is non-invertible, $W^s(x)$ is not necessarily connected. Recall that, as introduced in condition (H)', $V^s(x)$ denotes the arc connected component of $W^s(x)$ containing $x$. 

A key mechanism in proving the entropy formula \eqref{pesin eq} is the construction of an appropriate partition associated with the unstable manifolds, through which local expansions are linked to  positive Lyapunov exponents \cite{LY1}. Since 
the entropy formula to be established in \eqref{eq non-inv of folding type} concerns the backward process through negative Lyapunov exponents, a measurable partition associated with the stable manifolds is required instead. 

Let $\mathcal B_\mu(M)$ denote the 
completion of the Borel $\sigma$-algebra with respect of $\mu$. Then $(M,\mathcal B_\mu(M),\mu)$ is a Lebesgue space; that is, it is isomorphic to the unit interval $[0,1]$ with the Lebesgue measure, possibly union a countable collection  of atoms.   

\begin{Def}\label{Def}
{\rm A measurable partition $\xi$ of $(M,\mathcal B_\mu(M),\mu)$ is said to be subordinate to the $W^s$-manifolds of $(f,\mu)$ if for $\mu$-a.e. $x,$ the element of $\xi$ containing $x,$ denoted as $\xi(x),$ satisfies
$\xi(x)\subset W^s(x)$ and  contains an open neighborhood of $x$ in $V^s(x)$ in the submanifold topology of $V^s(x)$. A measurable partition $\xi$ is said to be increasing if $f^{-1}\xi\preceq\xi$, 
where for any two measurable partitions $\zeta_1$ and $\zeta_2$, the notation $\zeta_1\preceq\zeta_2$ means that each element of $\zeta_2$ is contained in some element of $\zeta_1$.
}
\end{Def}
\vspace{1mm}
 
For the Lebesgue space $(M,\mathcal B_\mu(M),\mu)$, any measurable partition $\xi$ naturally induces a system of conditional measures  
$\{\mu^{\xi}_x\}_{x\in M}$, each supported on the elements of the partition. More precisely, 
denote by  $M/ \xi$ the   factor-space of $M$ modulo $\xi$, i.e., each point  of $M/\xi$ is an element of $\xi$.  Let  $p: x\mapsto \xi(x),$ that is, map $p$ takes each point $x\in M$ to the element $\xi(x).$ Note that $\mu$ naturally induces a measure $\mu_{\xi}$ on $M/ \xi$ by  $\mu_{\xi}(Z)=\mu(p^{-1}(Z))$. Moreover, there exists a system of conditional measures    $\{\mu^{\xi}_{B}: B\in \xi\}$ satisfying
\begin{eqnarray}\label{conditional_measure}
  \int_M \phi(x) d\mu= \int_{M/\xi} \int_{B} \phi(x) d \mu^{\xi}_{B}(x)d\mu_{\xi}(B),\quad \forall\phi\in L^1(\mu),
\end{eqnarray}
 where $L^1(\mu)$ denotes the space of integrable functions with respect to $\mu.$ 
We denote $\mu^{\xi}_{x}=\mu^{\xi}_{B}$ whenever $x\in B$. For further details on conditional measures, the reader is referred to \textsection 1 of \cite{Rokhlin}.

For each $x\in\Delta_0\backslash I$, recall that $\lambda^s_x$ denotes the Lebesgue measure on $W^s(x)$ induced by the inherited Riemannian structure. For $x\in I,$ since  $W^s(x)=\{x\},$ we simply let $\lambda^s_x=\delta_x$.
If $\mu$ is an inverse SRB measure, then  for every measurable partition $\xi$ subordinate to the $W^s$-manifolds, it holds that $\mu^{\xi}_x\ll\lambda^s_x$ for $\mu$-a.e. $x\in M,$ where $\mu^\xi_x$ denotes  the conditional measure of $\mu$ on $\xi(x)$.

With the degeneracy of $f$  appropriately controlled as in Lemma \ref{lem:non-degenerate-neighborhood}, an increasing measurable partition
subordinate to the stable manifolds can be constructed. The construction  is an adaptation of \cite {Le-S} in the setting of degeneracy (see also \cite{LQ_book}).  
\begin{Lem}\label{lem:measurable-partition}
There exists an increasing measurable partition $\eta$ that  is subordinate to the $W^s$-manifolds. 
\end{Lem}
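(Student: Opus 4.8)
The plan is to adapt the classical construction of Rokhlin-type increasing partitions subordinate to invariant foliations (as in \cite{Le-S, LY1, LQ_book}) to the present degenerate $C^{1+\alpha}$ setting, using the Lyapunov neighborhoods and stable manifolds established above. First I would fix, for a suitable Pesin block $\widetilde\Lambda_{a,b,k}$ of positive $\mu$-measure and a density point $x_0\in\widetilde\Lambda_{a,b,k}$, a small ``rectangle'' neighborhood $U$ of $x_0$ foliated by pieces of local stable manifolds $W^s_{\loc}(y)$, $y\in\widetilde\Lambda_{a,b,k}\cap U$; by Theorem \ref{lem:local_manifolds}(i)(iv)(v) these plaques have uniformly large $d''_{W^s}$-size and depend continuously (indeed H\"older-continuously) on the base point, so the foliation of $U$ by the connected plaques $W^s_{\loc}(y)\cap U$ is a genuine continuous lamination and its partition into plaques is measurable. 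I would then define the partition $\zeta_0$ whose nontrivial elements are these plaques (on $U$) and which is the trivial (point) partition off $\bigcup_{n\ge 0}f^{-n}U$ — more precisely, I set $\zeta_0$ to be the partition whose atom at $x$ is the plaque through $x$ if $x\in U$ and $\{x\}$ otherwise. The key point of this initial step is that, because $f$ restricted to each $B(f^mx,\tilde r_k e^{-30m\vep/\alpha})$ is a diffeomorphism onto its image (Lemma \ref{lem:non-degenerate-neighborhood}) and $f(W^s_{\loc}(f^mx))\subset W^s_{\loc}(f^{m+1}x)$ (Theorem \ref{lem:local_manifolds}(iii)), the map $f$ carries plaques forward into plaques; hence $\zeta_0$ is ``forward-subordinate'' in a way that lets us iterate.

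Next I would take the standard refinement: set
\[
\eta=\bigvee_{n=0}^{\infty} f^{-n}\zeta_0
\]
(equivalently $\eta(x)=\bigcap_{n\ge 0}f^{-n}\zeta_0(f^n x)$). By construction $f^{-1}\eta\preceq\eta$, so $\eta$ is increasing. The content is to check that $\eta$ is still subordinate to the $W^s$-manifolds: for $\mu$-a.e.\ $x$ I must show $\eta(x)\subset W^s(x)$ and that $\eta(x)$ contains an open neighborhood of $x$ in $V^s(x)$. Inclusion in $W^s(x)$ is immediate once $x$ returns to $U$ infinitely often (by Poincar\'e recurrence applied to the positive-measure block) and $\eta(x)\subset\zeta_0(x)\subset W^s_{\loc}(y)\subset W^s(x)$. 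For the ``contains an open neighborhood'' part I would use the contraction estimate Theorem \ref{lem:local_manifolds}(ii): under $f^n$ the $d''_{W^s}$-diameter of a plaque shrinks at rate $\lambda'^{\,n}$, while the plaque at $f^n x$ has $d''_{W^s}$-radius bounded below by $\tfrac12\tilde r_k e^{-30n\vep/\alpha}$, which decays only subexponentially; choosing $\vep$ small relative to $|\log\lambda'|$, the preimage $f^{-n}(\text{plaque at }f^nx)$ still contains a definite-size neighborhood of $x$ in $V^s(x)$ for all large $n$, so the intersection defining $\eta(x)$ does too. This is the place where control of the degeneracy matters: $f^{-n}$ is well-defined on these neighborhoods precisely because we stayed inside the non-degenerate Lyapunov neighborhoods of Lemma \ref{lem:non-degenerate-neighborhood}, and the $e^{-m\vep/\alpha}$-type bounds there are exactly what keep the neighborhoods from collapsing.

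Finally I would verify measurability of $\eta$ in the Lebesgue-space sense (a countable $\bmod\,0$ generating condition), which follows because $\zeta_0$ is measurable with countable basis — the plaque foliation of $U$ is continuous — and countable refinements under the measurable map $f$ preserve measurability; and I would note $\mu(\bigcup_n f^{-n}U)=1$ is not needed, since off that set $\eta$ is the point partition, which is trivially subordinate in the degenerate sense with $\lambda^s_x=\delta_x$ on $I$ and harmless elsewhere (those base points have a stable manifold but the atom is a point, still contained in $W^s(x)$; to get the ``open neighborhood in $V^s(x)$'' requirement one restricts attention to the full-measure set of points recurring to $U$, exactly as in \cite{Le-S}). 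The main obstacle I anticipate is the second step — ensuring the plaques through a.e.\ point survive the infinite intersection with a nondegenerate size — and its resolution is the interplay between the uniform exponential contraction $\lambda'$ on stable plaques and the merely subexponential $e^{-30m\vep/\alpha}$ decay of the Lyapunov neighborhood radii, with $\vep$ chosen small enough; a secondary technical point is that the construction must be done on a Pesin block $\widetilde\Lambda_{a,b,k}$ and then the parameters $a,b$ (hence the splitting $E^s\oplus H$) let through, which is routine since $\bigcup_{a<0<\text{?}}$—rather, $\bigcup_k\widetilde\Lambda_{a,b,k}=\widetilde\Lambda_{a,b}$ exhausts $\mu$ up to the set $I$ and $\bigcup_{a\uparrow 0}$ handles points with $\lambda_{s(x)}<0$.
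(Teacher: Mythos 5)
There are two genuine problems with your construction. First, your initial partition $\zeta_0$ is defined incorrectly off the rectangle: you take the \emph{point} partition on $M\setminus U$, whereas the construction (as in Ledrappier--Strelcyn, and in the paper) must take $M\setminus S(\hat x,r)$ as a \emph{single atom}. With your choice, whenever $f^{n}x\notin U$ the condition $f^{n}y\in\zeta_0(f^{n}x)$ forces $f^{n}y=f^{n}x$, and since $f^{n}$ is injective on a small neighborhood of $x$ in $V^s(x)$ (Lemma \ref{lem:non-degenerate-neighborhood}), the atom $\eta(x)=\bigcap_{n}f^{-n}\zeta_0(f^{n}x)$ meets $V^s(x)$ only in $\{x\}$. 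Since $\mu(U)<1$, almost every orbit leaves $U$, so your $\eta$ degenerates to the point partition a.e.\ and is not subordinate to the $W^s$-manifolds.

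Second, even after correcting $\zeta_0$, your argument for why the atoms survive the infinite join misses the essential mechanism. The obstacle is not the competition between the contraction rate $\lambda'$ and the subexponential shrinking $e^{-30m\vep/\alpha}$ of the Lyapunov neighborhoods; it is the \emph{boundary of the rectangle}: if $f^{n}z\in S(\hat x,r)$ lies very close to $\partial B(\hat x,r)$, a nearby point $v\in W^s_{\loc}(z)$ may have $f^{n}v$ on the other side of $\partial B(\hat x,r)$, hence in a different atom of $\xi_r$ at time $n$, and this can happen at infinitely many $n$ for a fixed $r$. The paper resolves this by a measure-theoretic choice of the radius: setting $\nu(A)=\mu(\{z:d(\hat x,z)\in A\})$ and invoking Proposition 3.2 of \cite{Le-S} to pick $r$ with $\sum_{n}\nu([r-(\lambda')^{n},r+(\lambda')^{n}])<\infty$, so that by Borel--Cantelli a.e.\ orbit approaches $\partial B(\hat x,r)$ faster than $(\lambda')^{n}$ only finitely often; this is exactly what makes the quantity $\beta_r(z)$ positive and gives each atom a definite stable neighborhood of $z$. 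Your proposal contains no analogue of this step, and without it the claim that $\eta(x)$ contains an open neighborhood of $x$ in $V^s(x)$ does not follow. (Your use of Theorem \ref{lem:local_manifolds}(ii)--(iii) and of the recurrence to a positive-measure Pesin block is otherwise in the right spirit and matches the paper's setup.)
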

\begin{proof} 
For simplicity, denote $\tilde\Lambda_{k}=\tilde \Lambda_{a,b,k}$, where $\tilde \Lambda_{a,b,k}$ is given in Lemma \ref{Pesin-set}. 
By Theorem  \ref{lem:local_manifolds}(ii), for any $x\in \tilde\Lambda_{k} $ and $y, z\in W^s_{\loc}(x)$, it holds for every $n\ge0$ that 
\begin{eqnarray*}
d_{W^s}(f^{n}y,f^{n}z)\le 2d''_{W^s}(f^{n}y,f^{n}z)
\le 2(\lambda')^n  d''_{W^s}(y,z)
\le2 C_0' e^{k\vep}(\lambda')^nd_{W^s}(y,z).    
\end{eqnarray*}

For the  increasing sequence of subsets $\{\tilde\Lambda_{k}\}_{k\ge1}$ and any prescribed decreasing sequence of numbers 
$\{\ell_k\}_{k\ge1}$ (to be specified later in Lemma 3.2), there exist sequences of positive constants $\{\vep_k\}_{k\ge1}$ and $\{R_k\}_{k\ge1}$ such that
    \begin{itemize}
	\item[(i)] For each $x\in \tilde\Lambda_{k}$ and any $y\in\tilde \Lambda_{k}\cap B(x,\varepsilon_k r)$, the following holds: for any    
    $0<r\le R_k$, the intersection $W^s_{\loc}(y)\cap B(x,r)$ is connected, and its diameter along the stable manifold is bounded by $\ell_k,$ that is, 
    \[\diam_{W^s}(W^s_{\loc}(y)\cap B(x,r)) \le\ell_k ;\]

\item[(ii)] The map $y\mapsto W^s_{\loc}(y)\cap B(x,R_k)$ is continuous for $y\in B(x,\varepsilon_k R_k)\cap \tilde\Lambda_{k}$, where the space of subsets of $B(x,R_k)$ is endowed with the Hausdorff topology;
\vspace{1mm}

\item[(iii)] For any  $0<r\le R_k$,  any $x\in\tilde\Lambda_k$ with $k\ge1,$ define 	$$S(x,r)=\bigcup\nolimits_{y\in \tilde\Lambda_k\cap B(x,\varepsilon_k r)} W^s_{\loc}(y)\cap B(x,r).$$
Then for any two points $z_1,z_2\in S(x,r)$ which do not lie in the same local stable  leaf within $S(x,r),$
it holds that $d_{W^s}( z_{1}, z_{2})\ge 2r$. 
\end{itemize}

Without loss of generality, assume that  $\mu$ is ergodic. Choose $\hat{x}\in  \supp (\mu|_{\tilde \Lambda_{k_0}})$  for  some $\tilde \Lambda_{k_0}$ with  $\mu(\tilde \Lambda_{k_0})>0$. Then  $\mu(S(\hat{x},r))>0$ for all $r>0$.   For any $0<r\le R_{k_0}$,  define the measurable partition 
$$\xi_r(y)=
\begin{cases}W^s_{\loc}(y)\cap B(\hat{x},r),&\text{if}\ y\in S(\hat{x},r), \\[2mm]
M\setminus S(\hat{x},r), &\text{if}\ y\not\in S(\hat{x},r).\end{cases}$$
Define 
\begin{eqnarray*}
\xi^s_r\,\, :=\,\,\bigvee\limits_{n=0}^{+\infty}(f^{-n}\xi_r\setminus\Sigma_{f^n}),\quad  S^s_r:=\bigcup\limits_{n=0}^{+\infty}(f^{-n}S(\hat{x},r)\setminus \Sigma_{f^n}),
\end{eqnarray*}
where for a partition $\xi$, write $\xi\setminus\Sigma_{f^n}=\{A\setminus \Sigma_{f^n}:\,\,A\in\xi\}$. 
Since $\mu$ is ergodic and $\mu(S^s_r)>0,$ it follows that $\mu(S^s_r)=1$.

In the following, we show that for $\mu$-a.e.\ $z$, the partition element $\xi^s_r(z)$ contains a neighborhood of $z$ along the stable manifold. It is clear that for any $z\in S^s_r$, there exists some $n\in\N$ such that
$$\xi^s_r(z)\subset f^{-n}W^s_{\loc}(f^{n}z)\subset W^s(z).$$
 For $z\in\bigcup_k \tilde \Lambda_k$,  define
$$\kappa(z)=\inf\{k:\,z\in\tilde \Lambda_{k}\}$$
and  denote 
$$\beta_r(z)=\inf_{n\ge0}\Big{\{}\frac{1}{2}\tilde{r}_{\kappa(z)},\,\,\frac{1}{4 C_0' e^{\kappa(z)\vep}}d(f^{n}z,\,\,\partial B(\hat{x},r)) (\lambda')^{-n},\frac{r}{2 C_0' e^{\kappa(z)\vep}}\Big{\}}.$$

We now show that $\xi^s_r(z)$ contains a neighborhood of $z$ with size  at least $\beta_r(z)$.
First, we show that $\beta_r(z)>0$ for $\mu$-a.e. $z$. Define a finite non-negative measure $\nu$ on $[0,R_k]$ as follows:
$$\nu(A)=\mu(\{z\in M:\,d(\hat{x},z)\in A\}).$$ 
By Proposition 3.2 of \cite{Le-S},  there exists $0<r<R_k$ such that $$\sum_{n=0}^{\infty}\nu([r- (\lambda')^n,r+ (\lambda')^n])<+\infty,$$
where such values of $r$ form a subset of $(0,R_k)$ of full Lebesgue measure. It follows from the invariance of $\mu$ that 
$$\sum_{n=0}^{\infty}\mu(\{z: d(f^nz,\partial B(\hat{x},r))\in [r- (\lambda')^{n},r+ (\lambda')^n]\})<+\infty.$$
By the Borel-Cantelli lemma,  for $\mu$-a.e. $z$, the  number of $n$ such that  $$d(f^nz,\partial B(\hat{x},r))\in [r- (\lambda')^{n}, r+ (\lambda')^{n}]$$ 
is finite. This implies that  $\beta_r(z)>0$ for $\mu$-a.e. $z.$

Next, for any $v\in W^s_{\loc}(z,  \beta_r(z))$, we claim that \begin{eqnarray}\label{eql}\xi^s_r(f^nz)= \xi^s_r(f^nv),\quad\forall n \ge0.
\end{eqnarray}   
Indeed,  by the definition of  $\beta_r(z)$,  we have for all $n\ge 0$, 
\begin{eqnarray}\label{1small}
&&d_{W^s}(f^nv, f^nz)\le \beta_r(z)  2 C_0' e^{\kappa(z)\vep}(\lambda')^n \le  \frac{1}{2} d(f^{n}z,\,\,\partial B(\hat{x},r)),\\[2mm]
\label{2small}
&&d_{W^s}(f^nv, f^nz) \le  \beta_r(z) 2 C_0' e^{\kappa(z)\vep}(\lambda')^n \le r.
\end{eqnarray}
By the choice of $S(\hat{x},r)$,   
both $f^nv$ and$f^nz$ either belong to $S(\hat{x},r)$ or lie outside it. Then
 \begin{itemize}
\item[-] If  $ f^nz, f^nv\in S(\hat{x},r)$, then   (\ref{eql}) follows from  (\ref{1small})  and  (\ref{2small}).
\item[-]  If  $ f^nz, f^nv\notin S(\hat{x},r)$, then   (\ref{eql}) follows from the definition of  $\xi^s_r$.
\end{itemize}
This completes the proof of Lemma \ref{lem:measurable-partition} by letting $\eta=\xi^s_r$. 
\end{proof}

\section{Proof of  Theorem~\ref{thm1} -- necessity }\label{nes}
This section proves  the ``necessity  part" of Theorem \ref{thm1}, i.e., if $\mu$  is an inverse SRB measure, then the entropy formula of folding type \eqref{eq non-inv of folding type} and (H)" hold. 

We begin with the following  fact about conditional entropy. 
\begin{Lem}\label{lem:conditional-entropy-upper-bound}
Let $(X, \mathcal B, \mu)$ be a Lebesgue space, $f: X\to X$ be a $\mu$-preserving transformation, and $\eta$ be an 
 increasing measurable partition of $X$.
Then
\begin{eqnarray}\label{conditional-eta-upper-bound}
H_\mu(\eta|f^{-1}\eta)\leq h_\mu(f).
\end{eqnarray}
In particular, 
\begin{eqnarray}\label{folding-entropy-upper-bound}
F_\mu(f)=H_\mu(\epsilon|f^{-1}\epsilon)\leq h_\mu(f)
\end{eqnarray}
where $\epsilon$ denotes the measurable partition into single points. 
\end{Lem}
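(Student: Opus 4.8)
The plan is to derive both inequalities from the standard fact that the metric entropy $h_\mu(f)$ is the supremum of $h_\mu(f,\zeta)$ over finite (or finite-entropy) measurable partitions $\zeta$, together with the increasing property $f^{-1}\eta\preceq\eta$ and the Kolmogorov--Sinai style identity for the entropy of an increasing partition. First I would recall that for any measurable partition $\zeta$ with $H_\mu(\zeta)<\infty$,
\[
h_\mu(f,\zeta)=\lim_{n\to\infty}\frac1n H_\mu\Big(\bigvee_{i=0}^{n-1}f^{-i}\zeta\Big)
=H_\mu\Big(\zeta\;\Big|\;\bigvee_{i=1}^{\infty}f^{-i}\zeta\Big),
\]
the last equality being the classical decomposition of entropy of a process into conditional entropies. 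Next I would use that $\eta$ is increasing, i.e. $f^{-1}\eta\preceq\eta$, hence $\bigvee_{i=1}^{\infty}f^{-i}\eta = f^{-1}\eta$ (each further refinement $f^{-i}\eta$, $i\ge 1$, is coarser than $f^{-1}\eta$), so that $H_\mu(\eta\mid\bigvee_{i\ge1}f^{-i}\eta)=H_\mu(\eta\mid f^{-1}\eta)$.

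The one genuine subtlety is that $\eta$ need not have finite entropy, so the clean formula above does not apply verbatim; I would handle this by a truncation/approximation argument. Concretely, take an increasing sequence of finite partitions $\zeta_n\nearrow\eta$ (mod $\mu$) with $H_\mu(\zeta_n)<\infty$; for each $n$ one has $H_\mu(\zeta_n\mid f^{-1}\zeta_n)\le h_\mu(f,\zeta_n)\le h_\mu(f)$ by the preceding paragraph applied to $\zeta_n$ (using that $f^{-1}\zeta_n\preceq f^{-1}\eta\preceq\eta$, and monotonicity of conditional entropy in the conditioning $\sigma$-algebra together with the fact that $\zeta_n$ is coarser than $\eta$). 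Then letting $n\to\infty$ and invoking the monotone convergence theorem for conditional entropy (increasing in the first argument, decreasing in the second — applied with care, since both arguments move), one gets $H_\mu(\eta\mid f^{-1}\eta)=\lim_n H_\mu(\zeta_n\mid f^{-1}\zeta_n)\le h_\mu(f)$. This gives \eqref{conditional-eta-upper-bound}.

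Finally, \eqref{folding-entropy-upper-bound} is the special case $\eta=\epsilon$, the partition into points, which is trivially increasing since $f^{-1}\epsilon\preceq\epsilon$ always holds (the preimage partition is always coarser than the point partition); thus $F_\mu(f)=H_\mu(\epsilon\mid f^{-1}\epsilon)\le h_\mu(f)$ follows directly. I expect the main obstacle to be purely technical bookkeeping in the infinite-entropy case: making the approximation $\zeta_n\nearrow\eta$ rigorous and justifying passage to the limit in the conditional entropies (one must ensure the conditioning partitions $f^{-1}\zeta_n$ converge to $f^{-1}\eta$ and that no entropy is lost in the limit), rather than any deep dynamical input. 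Everything else is a direct application of standard ergodic-theoretic identities.
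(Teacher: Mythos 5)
Your overall strategy (approximate $\eta$ by finite partitions, use the Kolmogorov--Sinai conditional-entropy formula, and exploit $f^{-1}\eta\preceq\eta$ to collapse $\bigvee_{i\ge1}f^{-i}\eta$ to $f^{-1}\eta$) is the same as the paper's, but the execution contains a genuine error at the approximation step. You claim $H_\mu(\zeta_n\mid f^{-1}\zeta_n)\le h_\mu(f,\zeta_n)$ ``by the preceding paragraph applied to $\zeta_n$''. This inequality is backwards: for a finite partition $\zeta$ one has $h_\mu(f,\zeta)=H_\mu\big(\zeta\mid\bigvee_{i\ge1}f^{-i}\zeta\big)\le H_\mu(\zeta\mid f^{-1}\zeta)$, because $\bigvee_{i\ge1}f^{-i}\zeta$ refines $f^{-1}\zeta$ and conditioning on a finer partition only decreases conditional entropy. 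The collapse $\bigvee_{i\ge1}f^{-i}\zeta=f^{-1}\zeta$ that you use to reverse this requires $\zeta$ itself to be increasing, which the finite approximants $\zeta_n$ are not (only $\eta$ is). Concretely, for an irrational rotation with $\eta=\epsilon$ one has $h_\mu(f)=H_\mu(\epsilon\mid f^{-1}\epsilon)=0$, yet $H_\mu(\zeta_n\mid f^{-1}\zeta_n)>0$ for typical finite $\zeta_n$; this kills both your intermediate bound and the limit identity $H_\mu(\eta\mid f^{-1}\eta)=\lim_nH_\mu(\zeta_n\mid f^{-1}\zeta_n)$, which also fails in that example.

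The fix --- and this is exactly what the paper does --- is to keep the conditioning partition equal to the fixed increasing object $f^{-1}\eta$ throughout, and only approximate the first argument. Since $\zeta_n\preceq\eta$, one has $\bigvee_{i=1}^{N}f^{-i}\zeta_n\preceq\bigvee_{i=1}^{N}f^{-i}\eta=f^{-1}\eta$, hence
\begin{equation*}
h_\mu(f)\;\ge\;h_\mu(f,\zeta_n)\;=\;\lim_{N\to\infty}H_\mu\Big(\zeta_n\,\Big|\,\bigvee_{i=1}^{N}f^{-i}\zeta_n\Big)\;\ge\;\lim_{N\to\infty}H_\mu\Big(\zeta_n\,\Big|\,\bigvee_{i=1}^{N}f^{-i}\eta\Big)\;=\;H_\mu(\zeta_n\mid f^{-1}\eta),
\end{equation*}
where the collapse in the last equality is legitimate because $\eta$ (not $\zeta_n$) is increasing. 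Now only the first argument varies, $H_\mu(\zeta_n\mid f^{-1}\eta)\nearrow H_\mu(\eta\mid f^{-1}\eta)$ by the standard monotone convergence for conditional entropy, and the bound follows. Your treatment of the special case $\eta=\epsilon$ is fine once the general statement is repaired.
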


\begin{proof} 
Take a sequence of finite partitions $\{\xi_m\}_{m\geq1}$ of $X$ such that $\xi_m$ is increasing to $\eta$ as $m\to\infty.$
For any $m\ge1,$
\begin{eqnarray*}
h_\mu(f,\xi_m)=\lim\limits_{n\to\infty}H_\mu(\xi_m|\bigvee_{i=1}^nf^{-i}\xi_m)
\geq\lim\limits_{n\to\infty}H_\mu(\xi_m|\bigvee_{i=1}^nf^{-i}\eta),
\end{eqnarray*}
where the last inequality comes from that $\xi_m\preceq\eta$ for each $m\ge1.$ Note that $f^{-1}\eta\preceq\eta$ implies $\bigvee_{i=1}^nf^{-i}\eta\preceq f^{-1}\eta$ for any 
$n\ge1,$ and 
since $f^{-1}\eta\preceq\bigvee_{i=1}^nf^{-i}\eta,$ we have $\bigvee_{i=1}^nf^{-i}\eta=f^{-1}\eta$. Thus,
$$H_\mu(\xi_m|\bigvee_{i=1}^nf^{-i}\eta)=H_\mu(\xi_m|f^{-1}\eta),\quad\forall n\geq1,$$
and hence
\begin{eqnarray}\label{ineq:conditional-eta}
h_\mu(f,\xi_m)\geq\lim\limits_{n\to\infty}H_\mu(\xi_m|\bigvee_{i=1}^nf^{-i}\eta)=H_\mu(\xi_m|f^{-1}\eta).
\end{eqnarray}
Let $m\to\infty$ on both sides of \eqref{ineq:conditional-eta},  and together with the fact that $h_\mu(f,\xi_m)\leq h_\mu(f),\forall m\geq1,$ 
\eqref{conditional-eta-upper-bound} is obtained. 
\end{proof}
\begin{Rmk}
{\rm For the special case of $C^{1+\alpha}$ interval maps, the equality $F_\mu(f)=h_\mu(f)$ is established by the authors (see Theorem 4.1 in \cite{LW2}) with the ``$\le$" direction obtained using a similar approach. 
}
\end{Rmk}

Let $\eta$ be the measurable partition given in Lemma \ref{lem:measurable-partition}. Denote  
\[D^s(z)=|\det(D_zf|_{E^s(z)})|,\quad\text{ $\forall z\in\eta(x)$}.\] 
The following convergence result is crucial for controlling the dynamics along stable manifolds.  

\begin{Lem}\label{prop:bound}
	For {$\mu$-a.e. $x\in\Delta_2$}, the infinite product
	\begin{eqnarray}\label{convergence}
	\prod_{n=0}^\infty\dfrac{D^s(f^nz)}{D^s(f^nx)}<\infty,
	\end{eqnarray}
converges and is uniformly bounded  for all $z\in\eta(x).$
\end{Lem}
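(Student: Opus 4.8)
The plan is to exploit the uniform hyperbolicity along stable manifolds provided by Theorem \ref{lem:local_manifolds}, combined with the H\"older continuity of $Df|_{W^s_{\loc}}$ and its Jacobian $\det(Df|_{W^s_{\loc}})$, to estimate each factor $\log\big(D^s(f^nz)/D^s(f^nx)\big)$ and show the series of logarithms converges absolutely and uniformly. First I would reduce to the case $z\in W^s_{\loc}(f^{m_0}x)$-pullback, i.e., fix $x$ in a full-measure subset of $\Delta_2$ (using that $\mu$-a.e. $x$ lies in some $\tilde\Lambda_{a,b,k}$ and returns to $\bigcup_k\tilde\Lambda_k$ infinitely often), and note that for $z\in\eta(x)$ both $x$ and $z$ eventually land in a common local stable manifold $W^s_{\loc}(f^N x)$, on which Theorem \ref{lem:local_manifolds}(ii) gives exponential contraction at rate $\lambda'<1$ in the $d''_{W^s}$-metric. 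Thus for $n\ge N$ we have $d_{W^s}(f^n x, f^n z)\le C(x)(\lambda')^{n-N} d_{W^s}(f^N x, f^N z)$, with $C(x)$ controlled by the Pesin index $\kappa(x)$ via Proposition \ref{prop:new_norm}(iii).

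The key estimate is that $\log D^s$ restricted to the local stable manifolds is uniformly $\tilde\alpha$-H\"older with respect to $d''_{W^s}$, by Theorem \ref{lem:local_manifolds}(v). Hence
\[
\Big|\log D^s(f^n z)-\log D^s(f^n x)\Big|\le \tilde C_0'\,\big(d''_{W^s}(f^n x,f^n z)\big)^{\tilde\alpha}\le \tilde C_0'\,\big(C(x)(\lambda')^{n-N}\big)^{\tilde\alpha}\,D(x)^{\tilde\alpha},
\]
where $D(x)=d''_{W^s}(f^N x,f^N z)$ is bounded (uniformly for $z\in\eta(x)$, since $\eta(x)$ has bounded $d_{W^s}$-diameter by construction in Lemma \ref{lem:measurable-partition} and Theorem \ref{lem:local_manifolds}(i)). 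Summing over $n\ge N$ yields a geometric series $\sum_{n\ge N}\big((\lambda')^{\tilde\alpha}\big)^{n-N}<\infty$; the finitely many terms $n<N$ are bounded because $D^s$ is bounded away from $0$ and $\infty$ on the relevant compact pieces — here one must use that $f^n x,f^n z\notin\Sigma_f$ for $n<N$ along the stable-manifold orbit (which holds by the definition of $W^s(x)$ and $\Delta_2\subset\Delta_1$), so $D^s(f^n x),D^s(f^n z)>0$, and continuity plus compactness of the orbit segment gives a uniform lower bound. Therefore $\sum_{n\ge0}\big|\log D^s(f^n z)-\log D^s(f^n x)\big|<\infty$ uniformly in $z\in\eta(x)$, which gives both the convergence of the infinite product \eqref{convergence} and a uniform bound on it.

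The main obstacle I anticipate is the careful bookkeeping of the $k$-dependence (Pesin index) and the ``pre-$N$'' factors: one needs the estimate to be uniform over $z\in\eta(x)$ for a fixed $x$, which requires knowing that $\eta(x)$ is contained in a single local stable leaf of bounded diameter and that the whole forward orbit segment $\{f^n x, f^n z:\ 0\le n< N\}$ stays in a region where $D^s$ is continuous and strictly positive. The diameter control comes from the construction of $\eta$ in Lemma \ref{lem:measurable-partition} (the bound $\diam_{W^s}(\eta(x))\le\ell_{\kappa(x)}$), and the positivity of $D^s$ off $\Sigma_f$ combined with the fact that $\eta(x)\subset W^s(x)=\bigcup_n(f^{-n}W^s_{\loc}(f^n x)\setminus\Sigma_{f^n})$ ensures the pre-$N$ orbit avoids the degenerate set. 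A secondary subtlety is passing from the $d''_{W^s}$-contraction (which depends on the chart at $f^n x$, hence on $\kappa(f^n x)$) to a genuine contraction estimate; this is handled by observing that once $x$ and $z$ share a local stable manifold, subsequent contraction is governed by a \emph{single} chain $\tilde f_{(x,n)}$ with the fixed constant $\lambda'$, independent of how $\kappa$ fluctuates along the orbit, so only the one-time constant $C(x)=C_0'e^{\kappa(f^N x)\vep}$ enters.
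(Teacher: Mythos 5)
Your overall strategy coincides with the paper's: push $x$ and $z$ into a common local stable leaf, use the H\"older continuity of the stable Jacobian from Theorem \ref{lem:local_manifolds}(v) together with the exponential contraction $(\lambda')^{n}$ along the leaf, and sum a geometric series. However, there is a genuine gap at the central step. You assert that $\log D^s$ is \emph{uniformly} $\tilde\alpha$-H\"older on the local stable manifolds, citing Theorem \ref{lem:local_manifolds}(v). That theorem only gives uniform H\"older continuity of $D^s=\det(Df|_{W^s_{\loc}})$ itself. To pass from H\"older continuity of $D^s$ to H\"older continuity of $\log D^s$ you must divide by a lower bound on $D^s$, and in the degenerate setting this lower bound is \emph{not} uniform along the orbit: the orbit may approach $\Sigma_f$, where $D^s\to 0$. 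The only available control is Lemma \ref{Pesin-set}(iv), which gives $D^s(f^{n}\cdot)\gtrsim e^{-s(k_0+n-n_0)\varepsilon}$, i.e.\ a bound that decays exponentially (at rate $\varepsilon$) in $n$. Consequently the H\"older "constant" of $\log D^s$ at time $n$ grows like $e^{(k_0+n-n_0)\varepsilon(s+\alpha+\alpha\tilde\alpha)}$, which is exactly the factor appearing in the paper's estimate \eqref{D^s(n)} and which is absent from your final bound. Your geometric series $\sum_n((\lambda')^{\tilde\alpha})^{n-N}$ must therefore be replaced by $\sum_n\big(e^{\varepsilon(s+\alpha+\alpha\tilde\alpha)}(\lambda')^{\alpha\tilde\alpha}\big)^{n-n_0}$, and convergence requires the additional (and non-automatic) choice of $\varepsilon$ small enough that $e^{\varepsilon(s+\alpha+\alpha\tilde\alpha)}(\lambda')^{\alpha\tilde\alpha}<1$. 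Your remark that "only the one-time constant $C(x)=C_0'e^{\kappa(f^Nx)\vep}$ enters" addresses the contraction constant but not this separate, orbit-dependent degradation of the Jacobian lower bound, which is precisely where the degeneracy of $f$ bites.

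A second, related omission: the paper's argument needs $D^s(z)/D^s(y)\ge 1/2$ on each leaf piece in order to apply the mean value theorem to $\log D^s$, and this is arranged by calibrating the leaf diameters $\ell_k$ in the construction of $\eta$ (Lemma \ref{lem:measurable-partition}(i)) against the lower bound $e^{-sk\varepsilon}$ from Lemma \ref{Pesin-set}(iv). Your proposal treats the diameter bound as given but does not note that it must shrink with $k$ at a specific rate tied to the degeneracy; without this the mean value step fails on leaves close to $\Sigma_f$. The handling of the finitely many terms $n<n_0$ is essentially correct (one only needs $D^s(f^nz)\le L^s$ and $D^s(f^nx)>0$, the latter because $x\in\Delta_1$ avoids $\bigcup_n f^{-n}\Sigma_f$), so the gap is concentrated in the tail estimate.
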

\begin{proof} 
Without loss of generality, assume that $\mu$ is ergodic with $q$ Lyapunov exponents $-\infty\le\lambda_1\le\lambda_2\le\cdots\le\lambda_q<\infty$, and there exists $1\le s\le q$ such that $\lambda_s<0\le\lambda_{s+1},$ where we let $\lambda_{s+1}=\lambda_{q}+1$ if $s=q.$ Hence, with $a=\lambda_s/2$ and $b=\lambda_{s+1}/2,$ we have $\mu(\tilde\Lambda_{a,b})=1.$

Note that in the proof of Lemma \ref{lem:measurable-partition},  any element of $\eta$ belonging to the $\mu$-positive measurable set $	S(\hat{x},r)=\bigcup_{y\in \tilde\Lambda_{k_0}\cap B(\hat{x},\varepsilon_k r)} W^s_{\loc}(y)\cap B(\hat{x},r)$ is properly controlled. By the ergodicity of $\mu$ and the properties of the set $S(\hat{x},r)$, for $\mu$-a.e. $x$, 
there exists a finite $n_0\in\N$ and certain $y \in\tilde\Lambda_{k_0}$ such that 
\[f^{n_0}(\eta(x))\subseteq W^s_{\loc}(y)\cap B(\hat{x},r).\]
By the construction of Pesin set in 
Lemma \ref{Pesin-set}(iv), we have
\begin{eqnarray}\label{Ds}D^s(y)\ge  e^{-sk_0\varepsilon}. \end{eqnarray}    
	
For any $z\in W^s_{\loc}(y)\cap B(\hat{x},r),$ 	
let $A=T_y W^s_{\loc}(y)$ and $B=T_zW^s_{\loc}(y)$. The distance between $A$ and $B$ is defined as 
	$$d(A, B):=\max\Big{\{}  \max_{v\in A, \|v\|=1} d(v, B),\,   \max_{v\in B, \|v\|=1} d(v, A)\Big{\}},$$
	where $$d(v, A)=\min_{w\in A} \|v-w\|.$$
Similarly,  denote 	$d''(A, B)$ as the distance with respect to $\|\cdot\|''$. By definition, it satisfies 
$$d(A, B)\le 2C_0' e^{k_0\vep} d''(A, B).$$

Since $f$ is $C^{1+\alpha}$, there exists $C_1>0$ such that  
\begin{eqnarray*}
 |D^s(y)-D^s(z)|&=&|\det(Df\mid_A)-\det(Df\mid_B)| \nonumber\\
&\le& C_1d^{\alpha}(A, B)\le C_1 (2C_0' e^{k_0\vep} d''(A, B))^{\alpha}.   
\end{eqnarray*}
By Theorem \ref{lem:local_manifolds} (v), we have 
\begin{eqnarray*}d''(A, B)\le \tilde{C}_0^\prime (d''(y,z))^{\tilde{\alpha}}\le \tilde{C}_0^\prime( C_0' e^{k_0\vep} d(y,z))^{\tilde{\alpha}}.
\end{eqnarray*}
Therefore,  
\begin{align}\label{Ds-d}
|D^s(y)-D^s(z)|\le C_1 (2\tilde{C}_0^\prime)^{\alpha}(C_0' e^{k_0\vep} )^{\alpha(1+\tilde{\alpha})}d^{\alpha \tilde{\alpha}}(y,z).   
\end{align}
It follows that 
	\begin{eqnarray*}
		\Big|\dfrac{D^s(z)}{D^s(y)}-1\Big|&=&\Big|\dfrac{D^s(z)-D^s(y)}{D^s(y)}\Big|\\
        &\le&\dfrac{C_1 (2\tilde{C}_0^\prime)^{\alpha}(C_0' e^{k_0\vep} )^{\alpha(1+\tilde{\alpha})}d^{\alpha \tilde{\alpha}}(y, z)}{e^{-sk_0\varepsilon}}\le \frac12,
	\end{eqnarray*}
	provided that the constants  $\{\ell_k\}_{k\ge1}$  in (i) of  the construction of $\eta$ in Lemma \ref{lem:measurable-partition} satisfy
    \begin{eqnarray*}
     \ell_k\le \Big(\frac{1}{2}\frac{e^{-sk\varepsilon}}{C_1 (2\tilde{C}_0^\prime)^{\alpha}(C_0' e^{k\vep} )^{\alpha(1+\tilde{\alpha})} }\Big)^{\frac{1}{\alpha\tilde\alpha}},\quad\forall k\ge1.  
    \end{eqnarray*}
     In particular, this implies  
	\begin{eqnarray}\label{Dz}
	\Big|\dfrac{D^s(z)}{D^s(y)}\Big|\ge 1/2,\quad\forall z\in  W^s_{\loc}(y)\cap B(\hat{x},r).  
	\end{eqnarray}

    Now, for any $z\in\eta(x),$ since both $f^{n_0}x$ and $f^{n_0}z$ belong to $W^s_{\loc}(y)\cap B(\hat{x},r),$ the differential mean value theorem implies that 
	there exists $v_z\in W^s_{\loc}(y)\cap B(\hat{x},r)$ such that  
\begin{align*}
&|\log D^s(f^{n_0}x)-\log D^s(f^{n_0}z)| \\&\le\dfrac{1}{D^s(v_z)}|D^s(f^{n_0}x)-D^s(f^{n_0}z)|\\
&\le  \dfrac{1}{D^s(v_z)} C_1 (2\tilde{C}_0^\prime)^{\alpha}(C_0' e^{k_0\vep} )^{\alpha(1+\tilde{\alpha})}d^{\alpha\tilde{\alpha}}_{W^s}(f^{n_0}x,f^{n_0}z) \quad (\text{by}\,\, (\ref{Ds-d})).
\end{align*}  
 Thus,   \begin{align*}
 |\log D^s(f^{n_0}x)-\log D^s(f^{n_0}z)| &\le \dfrac{\tilde{C}_1e^{\alpha k_0\varepsilon(1+\tilde\alpha)}d^{\alpha\tilde\alpha}_{W^s}(f^{n_0}x,f^{n_0}z)}{D^s(v_z)}
\end{align*}
  for some constant $\tilde{C}_1>0$,   which, by \eqref{Dz} and \eqref{Ds}, yields 
	\begin{eqnarray*}\label{bound}
		|\log D^s(f^{n_0}x)-\log D^s(f^{n_0}z)|		&\le&\dfrac{2\tilde{C}_1e^{\alpha k_0\varepsilon(1+\tilde\alpha)}d^{\alpha\tilde\alpha}_{W^s}(f^{n_0}x,f^{n_0}z)}{D^s(y)}\\		&\le& e^{sk_0\vep}\cdot 2\tilde{C}_1e^{\alpha k_0\varepsilon(1+\tilde\alpha)}d^{\alpha\tilde\alpha}_{W^s}(f^{n_0}x,f^{n_0}z),
	\end{eqnarray*}
	i.e.,	\begin{eqnarray}\label{ineq:D^s_bound}
	\dfrac{D^s(f^{n_0}z)}{D^s(f^{n_0}x)}\le e^{2\tilde{C}_1 e^{k_0\vep(s+\alpha+\alpha\tilde\alpha)}d_{W^s}^{\alpha\tilde\alpha}(f^{n_0}x,f^{n_0}z)}. 
	\end{eqnarray}
	
We note that for all $n\ge n_0$, $f^ny,f^nx\in f^{n-n_0}(W^s_{\loc}(y))$ always lie in the local stable manifold of $f^{n-n_0}(y)$, which, according to Lemma \ref{Pesin-set}, belongs to the Pesin set $\tilde\Lambda_{a,b,k_0+n-n_0}$. 
Thus, the above analysis  applies  to  all $n\ge n_0$ with $k_0$ replaced by $(k_0+n-n_0).$ Consequently, 
	\begin{eqnarray}\label{D^s(n)}
	\dfrac{D^s(f^{n}z)}{D^s(f^{n}x)}\le e^{2\tilde{C}_1 e^{(k_0+n-n_0)\vep(s+\alpha+\alpha\tilde\alpha)}d_{W^s}^{\alpha\tilde\alpha}(f^{n}x,f^{n}z)},\quad\forall n\ge n_0.
	\end{eqnarray}  
	By the contraction property of local stable manifolds in Theorem \ref{lem:local_manifolds}, there exists $\lambda^\prime\in(0,1)$ such that 
	\[d_{W^s}(f^nx,f^nz)\le 2d''_{W^s}(f^nx,f^nz)\le 2({\lambda'})^{n-n_0}d''_{W^s}(f^{n_0}x,f^{n_0}z),\quad\forall n\ge n_0.\]
	Thus, by letting $\varepsilon>0$ be sufficiently small so that $e^{\vep(s+\alpha+\alpha\tilde\alpha)}{\lambda^\prime}^{\alpha\tilde\alpha}<1$, we obtain
	\begin{eqnarray*}
		\prod_{n=n_0}^\infty\dfrac{D^s(f^nz)}{D^s(f^nx)}&\le& e^{2\tilde{C}_1 e^{k_0\vep(s+\alpha+\alpha\tilde\alpha)}(\sum_{n=n_0}^\infty e^{(n-n_0)\varepsilon(s+\alpha+\alpha\tilde\alpha)}d_{W^s}^{\alpha\tilde\alpha}(f^{n}x,f^{n}z))}\\
		&\le& e^{2\tilde{C}_1 e^{k_0\vep(s+\alpha+\alpha\tilde\alpha)}d_{W^s}''^{\alpha\tilde\alpha}(f^{n_0}x,f^{n_0}z) 2^{\alpha\tilde\alpha}\sum_{n=n_0}^\infty (e^{\varepsilon(s+\alpha+\alpha\tilde\alpha)}{\lambda'}^{\alpha\tilde\alpha})^{(n-n_0)}}<\infty.
	\end{eqnarray*}
	
	Since $f^{n_0}x, f^{n_0} z$ lie in the same disc $W^s_{\loc}(y)\cap B(\hat{x},r)$, whose diameter is uniformly bounded,
	the $d_{W^s}''$-distance between 
	$f^{n_0}x$ and $f^{n_0} z$ is also uniformly bounded. Hence, 
    \begin{eqnarray*}
		\prod_{n=n_0}^\infty\dfrac{D^s(f^nz)}{D^s(f^nx)}<\infty,\quad\text{uniformly bounded  for all $z\in\eta(x).$}
	\end{eqnarray*}
Moreover,  recall  $L=\sup_{x\in M}\|D_xf\|$. Then \begin{eqnarray*}
		\prod_{n=0}^{n_0-1}\dfrac{D^s(f^nz)}{D^s(f^nx)}\le \prod_{n=0}^{n_0-1}\dfrac{L^s}{D^s(f^nx)}<\infty,\quad\text{uniformly  bounded for all}\, z\in\eta(x).
	\end{eqnarray*}    
	Therefore, 
	\begin{eqnarray*}
		\prod_{n=0}^\infty\dfrac{D^s(f^nz)}{D^s(f^nx)}<\infty,\quad\text{uniformly bounded for all $z\in\eta(x).$}
	\end{eqnarray*}
	
This completes the proof of Lemma \ref{prop:bound}.
\end{proof}

\noindent{\it Proof of the necessity in Theorem  \ref{thm1}:}

(i) {\it Proof of the entropy formula of folding type:} 
Since the ``$\le$"  direction   in the formula \eqref{eq non-inv of folding type}  has already been proved by the authors \cite{LW1} for any $C^{1+\alpha}$ map  with invariant measure, it only remains to prove the  ``$\ge$" direction, i.e., 
\begin{eqnarray}\label{eq:only prove}
	h_{\mu}(f)\ge F_{\mu}(f)-\displaystyle\int_{M}\sum\nolimits_{\lambda_i(x)<0} \lambda_i(x)d\mu(x),
\end{eqnarray}
under the condition that $\mu$ has 
absolutely continuous conditional
measure on the stable manifolds.
 
Let $\eta$ be the  measurable partition given by Lemma \ref{lem:measurable-partition}. Since $\mu$ is an inverse SRB measure, there exists, for 
$\mu$-a.e. $x$,  a $\lambda^s_x$-integrable function $ \psi$  on $\eta(x)$ such that 
\begin{eqnarray}\label{h} 
\dfrac{d\mu_x^\eta}{d\lambda^s_x}(y)=\psi(y), \quad  \lambda^s_x{\text{-a.e.}}\  y\in\eta(x).
\end{eqnarray}

For $\mu$-a.e. $x,$ consider the transformation between Lebesgue spaces
\[f_x :=f|_{(f^{-1}\eta)(x)}:\big((f^{-1}\eta)(x),\mu^{f^{-1}\eta}_x\big)\rightarrow\big(\eta(fx),\mu^{\eta}_{fx}\big).\]
By the $f$-invariance of $\mu$, $f_x$ is measure-preserving. 
Moreover, since 
$f_x$ is $\mu_x^{f^{-1}\eta}$-a.e. non-degenerate on $(f^{-1}\eta)(x),$
its Jacobian $J_{f_x}$ is well-defined. According to Lemma 10.5 in \cite{Parry}, the following identity holds
\begin{eqnarray}\label{eq:log(Jac)}
\log J_{f_x}(z)=-\log{\big((\mu_x^{f^{-1}\eta})_{\ _{z}}^{f_x^{-1}\epsilon}}(\epsilon(z))\big),\quad \ \mu_x^{f^{-1}\eta}{\text{-a.e.} }\, z\in (f^{-1}\eta)(x),
\end{eqnarray}
where the notation $(\mu_x^{f^{-1}\eta})_{\ _{z}}^{f_x^{-1}\epsilon}$
refers to the conditional measure by disintegrating
$\mu_x^{f^{-1}\eta}$ along the partition $f_x^{-1}\epsilon$, and $\mu_x^{f^{-1}\eta}$ itself denotes the conditional measure of $\mu$ with respect to the partition $f^{-1}\eta;$ that is, 
it involves a disintegration of a disintegration.

Noting that  $f^{-1}\eta   \preceq f^{-1}\epsilon$,  i.e.,  $f^{-1} \epsilon$ is a finer partition than $f^{-1}\eta$,  it follows from 
\eqref{conditional_measure} that    for any $\phi\in L^1(\mu)$, 
\begin{eqnarray}\label{trans} 
&&\int_M  \phi d\mu=\int_{M/f^{-1}\eta}\int_{(f^{-1}\eta)(x)} \phi d\mu^{f^{-1}\eta}_x d\mu_{f^{-1}\eta}\nonumber\\[2mm]
&&= \int_{M/f^{-1}\eta}\int_{(f^{-1}\eta) (x)/f^{-1}\epsilon} \int_{(f^{-1}\epsilon)(z)}\phi d (\mu^{f^{-1}\eta}_x)_z^{f^{-1}\epsilon}d(\mu^{f^{-1}\eta}_x)_{f^{-1}\epsilon} d\mu_{f^{-1}\eta}.
\end{eqnarray}
On the other hand, by directly applying \eqref{conditional_measure} with $\xi=f^{-1}\epsilon$, one obtains 
\begin{eqnarray}\label{trans1}
\int_M \phi d\mu= \int_{M/f^{-1}\epsilon}\int_{(f^{-1}\epsilon)(z)} \phi d\mu^{f^{-1}\epsilon}_z d\mu_{f^{-1}\epsilon}.
\end{eqnarray}
Combining \eqref{trans} and \eqref{trans1}, and noting the  arbitrariness of $\phi$, one obtains the transitivity property of conditional measures, that is, 
for  $\mu$-a.e. $x$ and $\ \mu_x^{f^{-1}\eta}$-a.e.\, $z\in (f^{-1}\eta)(x)$,  
$$  (\mu^{f^{-1}\eta}_x)_z^{f^{-1}\epsilon}(\epsilon(z)) =\mu^{f^{-1}\epsilon}_z(\epsilon(z)).$$
Since for $\mu$-a.e. $z$ one has
\begin{eqnarray}\label{log_J}
\log J_{f}(z)=-\log\mu_z^{f^{-1}\epsilon}(\epsilon(z)),
\end{eqnarray}
this identity, together with \eqref{eq:log(Jac)}, implies that 
\begin{eqnarray}\label{J=Jx}
\log J_{f_x}(z)=-\log\mu_z^{f^{-1}\epsilon}(\epsilon(z))=\log J_{f}(z),\ \mu_x^{f^{-1}\eta}{\text{-a.e.}}\, z\in (f^{-1}\eta)(x).
\end{eqnarray}
Also note that $f^{-1}\eta  \preceq\eta$. By applying  $\eta$ as $f^{-1}\epsilon$ in (\ref{trans}),  we have for $\mu$-a.e. $x$  and $\mu_x^{f^{-1}\eta}$ -a.e. $ z\in (f^{-1}\eta)(x),$
\begin{eqnarray*}    d\mu^{f^{-1}\eta}_{x}(z)=\mu_x^{f^{-1}\eta}(\eta(z))d\mu^\eta_x(z),\ \ 
\end{eqnarray*}
which, combined with \eqref{h}, yields 
\begin{eqnarray}
J_{f_x}(z)&=& \frac{d\mu_{fx}^{\eta} (fz)}{  d\mu^{f^{-1}\eta}_{x}(z)}\nonumber\\[2mm] 
&=&\dfrac{1}{\mu_x^{f^{-1}\eta}(\eta(z))}\cdot \frac{d\mu_{fx}^{\eta} (fz)}{ d\mu^{\eta}_x(z)}\nonumber\\[2mm] &=&  \dfrac{1}{\mu_x^{f^{-1}\eta}(\eta(z))}\cdot \dfrac{\psi(fz)}{\psi(z)}\cdot\frac{d\lambda^s_{fx}(fz)}{d\lambda^s_x(z)}\nonumber\\[2mm]\label{Jf_x(1)} &=&\dfrac{1}{\mu_x^{f^{-1}\eta}(\eta(z))}\cdot \dfrac{\psi(fz)}{\psi(z)}\cdot\big|\det(D_zf|_{E^s(z)})\big|.
\end{eqnarray}
Taking logarithm on both sides of \eqref{Jf_x(1)} and combining with \eqref{log_J}, we obtain that for $\mu$-a.e. $z,$
\begin{eqnarray}\label{eq:local}
-\log\mu_z^{f^{-1}\epsilon}(\epsilon(z))=-\log\mu_x^{f^{-1}\eta}(\eta(z))+\log\dfrac{\psi(fz)}{\psi(z)}
+\log|\det(D_zf|_{E^s(z)})|.
\end{eqnarray}

To integrate  \eqref{eq:local}, it is necessary to ensure that each term in the equation is $\mu$-integrable. 
To verify this, note that by the definition of conditional entropy,
\[\displaystyle\int-\log\mu_x^{f^{-1}\epsilon}(\epsilon(z))d\mu(x)=H_\mu(\epsilon|f^{-1}\epsilon)\]
and \[\displaystyle\int-\log\mu_x^{f^{-1}\eta}(\eta(z))d\mu(x)=H_\mu(\eta|f^{-1}\eta).\]
Together with Lemma~\ref{lem:conditional-entropy-upper-bound} and the fact that  
$h_\mu(f)<\infty,$ it follows that 
$$-\log\mu_z^{f^{-1}\epsilon}(\epsilon(z))\in L^1(\mu),\ \ -\log\mu_x^{f^{-1}\eta}(\eta(z))\in L^1(\mu).$$
Moreover, by the integrability condition of $\mu$, $\log|\det(D_zf|_{E^s(z)})\big|\in L^1(\mu).$ Hence, 
$
\log\big(\dfrac{\psi\circ f}{\psi}\big)$ is also $\mu$-integrable. 
Then,  Proposition 2.2 of \cite{Le-S} yields that 
\begin{align}\label{integral=0}
\displaystyle\int\log\dfrac{\psi(fz)}{\psi(z)}d\mu(z)=0.  
\end{align}
Also note that 
for $z\in I=\{x\in\Delta_0:\dim E^s(x)=0\}$, \eqref{eq:local}   still holds by simply taking $\eta=\epsilon$ on  $I$ and hence \[\psi(z)=1,\ \log|\det(D_zf|_{E^s(z)})|=0.\]

Now, integrating both sides of \eqref{eq:local}, we obtain 
\begin{eqnarray}\label{eq:integral-form}
H_\mu(\epsilon|f^{-1}\epsilon)=H_\mu(\eta|f^{-1}\eta)+
\displaystyle\int\sum\nolimits_{\lambda_i(z)<0}\lambda_i(z)d\mu(z),
\end{eqnarray}
where the last term on the right-hand side comes from the fact that   $$\displaystyle\int\log|\det(D_zf|_{E^s(z)})|d\mu(z)=\displaystyle\int\sum\nolimits_{\lambda_i(z)<0}\lambda_i(z)d\mu(z).$$
Since, by Lemma~\ref{lem:conditional-entropy-upper-bound}, $H_\mu(\eta|f^{-1}\eta)\le h_\mu(f)$, \eqref{eq:integral-form}  directly yields
\eqref{eq:only prove}. 
\medskip

(ii) {\it Proof of condition (H)'':} Note that for any $z\in\eta(x)$ and $n\ge1,$  $f^nz\in\eta(f^nx)$. Thus, 
\eqref{Jf_x(1)} together with \eqref{J=Jx} yields
\begin{eqnarray*}
	\dfrac{\psi(z)}{\psi(x)}&=&\dfrac{\psi(fz)}{\psi(fx)}\cdot\dfrac{J_{f_x}(x)}{J_{f_x}(z)}\cdot\dfrac{D^s(z)}{D^s(x)}\\
	\vspace{1mm}\\
    &=&\dfrac{\psi(fz)}{\psi(fx)}\cdot\dfrac{J_{f}(x)}{J_{f}(z)}\cdot\dfrac{D^s(z)}{D^s(x)}\\
	\vspace{1mm}\\
	&=&\cdots   \\
	&=& \  \Big(\prod_{k=0}^n\dfrac{J_f(f^kx)}{J_f(f^kz)}  \cdot\dfrac{D^s(f^kz)}{D^s(f^kx)}  \Big)\frac{\psi(f^{n+1}(z))}{\psi(f^{n+1}(x))}    \\
	&=& \lim_{n\rightarrow\infty}\Big(\prod_{k=0}^n\dfrac{J_f(f^kx)}{J_f(f^kz)}\Big)\frac{\psi(f^{n+1}(z))}{\psi(f^{n+1}(x))} \lim_{n\rightarrow\infty}\Big(\prod_{k=0}^n\dfrac{D^s(f^kz)}{D^s(f^kx)}  \Big) \\[2mm] &:=& \Theta(x,z)\lim_{n\rightarrow\infty}\Big(\prod_{k=0}^n\dfrac{D^s(f^kz)}{D^s(f^kx)}  \Big),\quad\text{$\lambda^s_x$-a.e.\ $z\in\eta(x)$},
\end{eqnarray*}
where the convergence is guaranteed  by Lemma \ref{prop:bound}.  

Note that \eqref{sum=1} holds 
as a property of Jacobian.
For $y$ satisfying $\psi(y)=0$, one may choose $J_f$ arbitrarily satisfying (\ref{sum=1}), for completeness.   By the integrability of $\psi(z)$ with respect to $\lambda^s_x$ on $\eta(x)$, condition (H)" is proved. 
\qed

\section{Proof of Theorem \ref{thm1}--  sufficiency}\label{suf}
This section proves  the ``sufficiency part'' of  Theorem \ref{thm1}, that is,  if an $f$-invariant measure $\mu$ satisfies
  the folding-type entropy formula \eqref{eq non-inv of folding type}  and the condition (H)'', then  $\mu$ must be an inverse SRB measure.

Without loss of generality, it may be assumed that $\mu$ is ergodic. 
In fact, given any $f$-invariant measure $\mu$, if the folding-type entropy formula \eqref{eq non-inv of folding type} holds, then it also holds for almost every ergodic components of $\mu$. This follows from the fact that the inequality part of \eqref{eq non-inv of folding type} (i.e.,  the folding-type Ruelle inequality) has been established  
for all $f$-invariant measures \cite{LW1}. Therefore, if the equality \eqref{eq non-inv of folding type} holds for $\mu$, it must also hold for almost all  of its ergodic components. 

When $\mu$ is ergodic,
Theorem \ref{thm:MET} asserts that there exist
$q(q>0)$ Lyapunov exponents $\lambda_i (1\le i\le q)$ such that  
$-\infty\le \lambda_1\le\cdots\le\lambda_q<\infty$. 
Note that if $\lambda_1\ge0,$ then for $\mu$-a.e. $x$, we define 
$W^s(x)=\{x\}$ (which may not be the global manifold), and the conditional measure of $\mu$ on $W^s(x)$ is $\delta_x$.
The conclusion holds automatically. 
In the following, we assume $\lambda_1<0$, and let $1\le s\le q$ be such that $\lambda_s<0\le \lambda_{s+1}$ where we set  $\lambda_{s+1}=\lambda_{q}+1$ if $s=q.$ Note that $\mu(\Lambda_{\lambda_s/2,\lambda_{s+1}/2})=1.$

Recall that   $(\bar M, \tau, \bar{\mu})$ is the lifting system on the inverse limit space $\bar M$ of $(M,f,\mu),$ where $\bar M=\{\bar x=(\cdots, x_{-1},x_0,x_1,\cdots):x_i\in M,fx_i=x_{i+1}, i\in\mathbb Z\}$, and  $\pi:\bar M\to M$, $\bar x\mapsto x_0$ is the natural projection.

The following lemma  gives an additional  property for the 
 measurable partition $\eta$  in Lemma \ref{lem:measurable-partition}.
\begin{Lem}\label{prop:partition_xi}
Let $\eta$ be the  measurable partition in  Lemma \ref{lem:measurable-partition}. Then  $\vee_{n=0}^\infty\tau^n(\pi^{-1}\eta)$ is a partition of $\bar M$ into single points. 
\end{Lem}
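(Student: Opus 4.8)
The plan is to show that the partition $\bigvee_{n=0}^{\infty}\tau^{n}(\pi^{-1}\eta)$ separates points of $\bar M$, i.e., if $\bar x,\bar y\in\bar M$ lie in the same element of this partition then $\bar x=\bar y$. Equivalently, I need: whenever $\tau^{n}\bar x$ and $\tau^{n}\bar y$ lie in the same element of $\pi^{-1}\eta$ for every $n\ge 0$, then $x_i=y_i$ for all $i\in\mathbb Z$. Unpacking the definitions, $\tau^{n}\bar x$ and $\tau^{n}\bar y$ lie in the same $\pi^{-1}\eta$-element exactly when $x_n$ and $y_n$ lie in the same element of $\eta$, so the hypothesis is that $x_n\in\eta(y_n)$ for every $n\ge 0$.

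First I would use the fact (from the proof of Lemma~\ref{lem:measurable-partition}) that for $\mu$-a.e.\ $x$ the element $\eta(x)$ is contained in a local stable manifold at forward iterates: there is $n_0$ with $f^{n_0}(\eta(x))\subseteq W^s_{\loc}(y)$ for some $y\in\tilde\Lambda_{k_0}$, and more generally $f^{n}(\eta(x))$ is contained in a local stable manifold of a point in $\tilde\Lambda_{a,b,k_0+n-n_0}$ for all $n\ge n_0$. Hence if $x_n\in\eta(y_n)$ for all $n$, then for each large $n$ the points $x_n$ and $y_n$ lie on a common local stable manifold, and Theorem~\ref{lem:local_manifolds}(ii) gives the contraction estimate $d_{W^s}(f^{m}x_n,f^{m}y_n)\le 2C_0'e^{k\varepsilon}(\lambda')^{m}d_{W^s}(x_n,y_n)$ for all $m\ge0$, where $k=k_0+n-n_0$ and $\lambda'\in(0,1)$. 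The diameters $d_{W^s}(x_n,y_n)$ are uniformly bounded because they sit inside one of the controlled discs $W^s_{\loc}(y)\cap B(\hat x,r)$ from the construction of $\eta$.

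The key step is then a backward-contraction argument along the inverse limit. Fix any index $i\in\mathbb Z$; I want $x_i=y_i$. Pick $n\ge \max(i,n_0)$ large; then $x_n\in\eta(y_n)$, so $x_n$ and $y_n$ lie on a common local stable leaf, and $x_i=f^{\,-(n-i)}x_n$, $y_i=f^{\,-(n-i)}y_n$ are obtained by following the same inverse branch of $f^{n-i}$ (legitimate since points of $\Delta_2$ and their backward orbits avoid $\Sigma_{f^m}$, so the relevant inverse branches are well-defined). The contraction estimate, read at time $m=0$ but starting from time $n$ and going backward—more precisely, applying the estimate with base point the common local stable manifold at time $i$ and comparing forward to time $n$—gives $d_{W^s}(x_n,y_n)\ge C^{-1}(\lambda')^{-(n-i)}d_{W^s}(x_i,y_i)$ for a constant $C$ depending only on the Pesin block. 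Since $d_{W^s}(x_n,y_n)$ stays bounded while $(\lambda')^{-(n-i)}\to\infty$ as $n\to\infty$, we are forced to have $d_{W^s}(x_i,y_i)=0$, i.e.\ $x_i=y_i$. As $i$ was arbitrary, $\bar x=\bar y$, proving the partition is into points.

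The main obstacle I expect is the bookkeeping of which Pesin block and which constants apply at each time, since the block index drifts like $k_0+n-n_0$ and the neighborhood sizes and H\"older/contraction constants depend on it; one must check that the factor $(\lambda')^{-(n-i)}$ genuinely dominates these polynomially- or subexponentially-growing prefactors, which forces choosing $\varepsilon$ small (exactly as in Lemma~\ref{prop:bound}, where $e^{\varepsilon(s+\alpha+\alpha\tilde\alpha)}(\lambda')^{\alpha\tilde\alpha}<1$ was arranged). A secondary point to handle carefully is the degenerate directions: on the set $I$ where $\lambda_1\ge 0$ one has $\eta=\epsilon$ and $W^s(x)=\{x\}$, so the separation there is immediate and the argument above is only needed on $\Delta_2$; one should note that $\bar\mu$-a.e.\ $\bar x$ has its whole orbit in the relevant full-measure set, so the pointwise argument suffices to conclude the partition statement $\bar\mu$-a.e., which is what is meant.
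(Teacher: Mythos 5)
There is a genuine gap, and it starts at the very first step: you have unpacked the partition incorrectly. For the left shift $\tau$, a point $\bar v$ lies in $\bigl(\tau^{n}(\pi^{-1}\eta)\bigr)(\bar z)$ iff $\tau^{-n}\bar v\in(\pi^{-1}\eta)(\tau^{-n}\bar z)$, i.e.\ iff $v_{-n}\in\eta(z_{-n})$. So the hypothesis of being in a common element of $\bigvee_{n\ge0}\tau^{n}(\pi^{-1}\eta)$ constrains the \emph{past} coordinates $x_{-n},y_{-n}$, not the forward ones as you claim. This is not a cosmetic point: since $\eta$ is increasing, $f(\eta(x))\subseteq\eta(fx)$, so your condition ``$x_n\in\eta(y_n)$ for all $n\ge0$'' is equivalent to the single condition $x_0\in\eta(y_0)$, and the resulting partition would just be $\pi^{-1}\eta$ — manifestly not a partition into points. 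Under your reading the lemma is false, which is the signal that the reading is wrong.

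The second problem is the inequality driving your ``backward-contraction'' step. Forward iteration along stable leaves \emph{contracts}, so from $x_i,y_i$ on a common leaf one gets $d_{W^s}(x_n,y_n)\le C(\lambda')^{\,n-i}\,d_{W^s}(x_i,y_i)$; your claimed bound $d_{W^s}(x_n,y_n)\ge C^{-1}(\lambda')^{-(n-i)}d_{W^s}(x_i,y_i)$ is the reverse and is false (it would say backward iteration along a stable leaf contracts). Moreover, for $i<0$ your hypothesis does not place $x_i$ and $y_i$ on a common leaf at all, and since $f$ is non-invertible, $x_n=y_n$ tells you nothing about $x_i$ versus $y_i$ for $i<n$ — refining the past is precisely what $\bigvee_n\tau^n(\pi^{-1}\eta)$ is for. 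The paper's proof goes the other way: by recurrence there are infinitely many $n_i$ with $z_{-n_i}\in S(\hat x,r)$, so $z_{-n_i}$ and $v_{-n_i}$ lie on a common controlled local stable leaf with uniformly bounded leaf distance, and iterating \emph{forward} $n_i+t$ steps gives $d_{W^s}(z_t,v_t)\le 2C_0'e^{k\vep}(\lambda')^{\,n_i+t}d_{W^s}(z_{-n_i},v_{-n_i})\to0$ for each fixed $t$, whence $\bar z=\bar v$. Your auxiliary remarks (boundedness of the discs, the set $I$, choice of $\varepsilon$) are fine, but the core argument must be redone with the correct hypothesis and the correct direction of contraction.
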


\begin{proof}
Note that  $\bar{\mu}(\pi^{-1}(S(\hat{x},r)))=\mu(S(\hat{x},r))>0$.  For $\bar{\mu}$-a.e. $\bar{z}$,  there exists an increasing sequence of integers $\{n_i\}_{i=1}^\infty$ such that $\tau^{-n_i}(\bar{z})\in \pi^{-1}(S(\hat{x},r))$.  If $\bar{v}\in\big(\tau^{n_i}(\pi^{-1}\eta)\big)(\bar{z})$, then  $z_{-n_i}, v_{-n_i}\in W^s_{\loc}(y)$ for some $y\in \tilde\Lambda_{k}$,   
which implies for $-n_i/2\le   t\le 0 $,  
\begin{align*}
d_{W^s}(z_t, v_{t}) &=  d_{W^s}(f^{n_i+t}(z_{-n_i}), f^{n_i+t}(v_{-n_i}))\le  2 C_0' e^{k\vep}(\lambda')^{n_i+t}d_{W^s}(z_{-n_i},v_{-n_i}) \\[2mm] &\le    C_0' e^{k\vep}(\lambda')^{\frac{n_i}{2}}d_{W^s}(z_{-n_i},v_{-n_i})\to 0,\quad \text{as}\,\,n_i\to \infty. \end{align*}
Thus, $\bar{z}=\bar{v} $. 
\end{proof}
\medskip

Now, assuming that condition (H)" holds, let 
\begin{eqnarray*}
   \zeta=\eta\vee \xi,
\end{eqnarray*}
where $\eta$ is as in Lemma \ref{prop:partition_xi}, and $\xi$ is the partition given in (H)''. 
Then $\zeta$ is an 
increasing measurable partition subordinate to the $W^s$-manifolds and satisfies the property in Lemma \ref{prop:partition_xi}.

\begin{Prop}\label{prop:entropy}
For measurable partition $\zeta$, it holds that 
\begin{eqnarray}\label{conditional_entropy}
h_\mu(f)=H_\mu(\zeta|f^{-1}\zeta).
\end{eqnarray}
\end{Prop}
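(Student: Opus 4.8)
The plan is to establish the two inequalities separately. The inequality $H_\mu(\zeta\mid f^{-1}\zeta)\le h_\mu(f)$ is immediate from Lemma \ref{lem:conditional-entropy-upper-bound}, since $\zeta$ is increasing (as a join of the increasing partitions $\eta$ and $\xi$, with $f^{-1}\zeta\preceq\zeta$). For the reverse inequality $h_\mu(f)\le H_\mu(\zeta\mid f^{-1}\zeta)$, the key point is that $\zeta$ generates under forward iteration in the sense made precise by Lemma \ref{prop:partition_xi}: on the inverse limit space $\bar M$, the partition $\bigvee_{n\ge 0}\tau^n(\pi^{-1}\zeta)$ separates points. I would use the standard identity $h_{\bar\mu}(\tau)=h_\mu(f)$ relating the entropy of $f$ to that of the shift $\tau$ on $(\bar M,\tau,\bar\mu)$, together with the Kolmogorov–Sinai type computation of the entropy of a generating partition.

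First I would show $h_\mu(f)=h_\mu(f,\zeta)$, equivalently $h_{\bar\mu}(\tau)=h_{\bar\mu}(\tau,\pi^{-1}\zeta)$. Since $\bigvee_{n=0}^{\infty}\tau^n(\pi^{-1}\zeta)=\epsilon$ (the point partition) modulo $\bar\mu$-null sets by Lemma \ref{prop:partition_xi}, and since $\pi^{-1}\zeta$ together with its $\tau$-translates generates the Borel $\sigma$-algebra of $\bar M$, the Kolmogorov–Sinai theorem gives $h_{\bar\mu}(\tau)=h_{\bar\mu}(\tau,\pi^{-1}\zeta)$; pushing down through $\pi$ one gets $h_\mu(f)=h_\mu(f,\zeta)$. (Here one uses that $\pi^{-1}\zeta$ is a one-sided generator because it is refined by its forward $\tau$-images, which is exactly the content of Lemma \ref{prop:partition_xi}.)

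Next I would compute $h_\mu(f,\zeta)$ using that $\zeta$ is increasing, i.e.\ $f^{-1}\zeta\preceq\zeta$. For an increasing partition one has, for every $n\ge 1$, $\bigvee_{i=1}^{n}f^{-i}\zeta=f^{-1}\zeta$, and hence
\begin{eqnarray*}
h_\mu(f,\zeta)=\lim_{n\to\infty}H_\mu\Big(\zeta\,\Big|\,\bigvee_{i=1}^{n}f^{-i}\zeta\Big)=H_\mu(\zeta\mid f^{-1}\zeta).
\end{eqnarray*}
This is the same manipulation already used in the proof of Lemma \ref{lem:conditional-entropy-upper-bound}. Combining the three steps yields $h_\mu(f)=h_\mu(f,\zeta)=H_\mu(\zeta\mid f^{-1}\zeta)$, which is \eqref{conditional_entropy}.

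The main obstacle I anticipate is the rigorous justification that $\pi^{-1}\zeta$ is a one-sided generator for $(\bar M,\tau,\bar\mu)$ in the precise form needed to invoke Kolmogorov–Sinai: Lemma \ref{prop:partition_xi} gives that the forward join is the point partition, but one must also check the measurability/countable-generation hypotheses so that $h_{\bar\mu}(\tau)=h_{\bar\mu}(\tau,\pi^{-1}\zeta)$ genuinely follows, and confirm $H_\mu(\zeta\mid f^{-1}\zeta)<\infty$ so that all the conditional entropies above are finite (this uses $h_\mu(f)<\infty$, which holds by the Margulis–Ruelle inequality \eqref{ruelle ineq} together with the integrability hypothesis \eqref{integrable} bounding the Lyapunov exponents). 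The remaining steps are routine ergodic-theoretic bookkeeping.
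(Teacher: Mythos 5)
The inequality $H_\mu(\zeta\mid f^{-1}\zeta)\le h_\mu(f)$ is indeed immediate from Lemma \ref{lem:conditional-entropy-upper-bound}, but your derivation of the reverse inequality has a genuine gap: it is not valid to deduce $h_\mu(f)=h_\mu(f,\zeta)$ from the Kolmogorov--Sinai theorem here. That theorem applies to \emph{countable partitions of finite entropy}, whereas $\zeta$ is an uncountable measurable partition (each element is a piece of a stable manifold of measure zero), so $H_\mu(\zeta)=\infty$ and the quantity $\lim_n\frac1nH_\mu(\bigvee_{i=0}^{n-1}f^{-i}\zeta)$ is meaningless; for such partitions, generation does \emph{not} imply that $H_\mu(\zeta\mid f^{-1}\zeta)$ captures the full entropy. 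A clean way to see that your argument cannot work as stated: take $f$ to be an Anosov diffeomorphism with an invariant measure of positive entropy and apply your reasoning to $\zeta=\epsilon$, the partition into points. Then $\epsilon$ is increasing ($f^{-1}\epsilon\preceq\epsilon$) and $\bigvee_{n\ge0}\tau^n(\pi^{-1}\epsilon)$ is the point partition of $\bar M$, so both properties you invoke hold; yet your conclusion would give $h_\mu(f)=H_\mu(\epsilon\mid f^{-1}\epsilon)=F_\mu(f)=0$, which is false. The two hypotheses you actually use (increasing, forward join separates points on $\bar M$) are therefore insufficient; the geometric requirement that $\zeta(x)$ contain a neighborhood of $x$ inside $V^s(x)$ must enter the proof in an essential way, and your argument never uses it.

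What the missing step requires is precisely the hard content of Proposition 5.1 of \cite{LY1} and Section 4.2 of \cite{Liu08}, which the paper adapts: one compares $\zeta$ with a genuine finite-entropy partition adapted to the Lyapunov neighborhoods of Section \ref{pes}, and then shows that the ``transversal'' part of the entropy vanishes. Concretely, in the hyperbolic case one shows that any point whose forward orbit stays in the same Lyapunov neighborhoods as that of $x$ must lie on $V^s(x)$; in the non-hyperbolic case one bounds the expansion of the inverse branches $\tilde f_{x,n_j}$ along a transversal $\mathcal T$ by $ce^{j\vep}$ and uses the Lipschitz continuity of the stable holonomy to conclude that the transversal direction contributes zero entropy. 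None of this is ``routine ergodic-theoretic bookkeeping''; it is the core of the proposition, and Lemma \ref{prop:partition_xi} alone (which only controls the forward join on $\bar M$) cannot substitute for it. Your final manipulation $\lim_nH_\mu(\zeta\mid\bigvee_{i=1}^nf^{-i}\zeta)=H_\mu(\zeta\mid f^{-1}\zeta)$ is fine once everything is interpreted for measurable partitions, but the identification of this quantity with $h_\mu(f)$ is exactly what remains unproved.
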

\medskip

The  entropy equation  \eqref{conditional_entropy} 
was established in \cite{LY1} for $C^2$ diffeomorphisms (with $\zeta$ subordinate to the unstable manifolds, though),
and was later generalized to non-degenerate $C^2$ maps in \cite{Liu08} through the inverse limit space. Since $\zeta$ is subordinate to the $W^s$-manifolds, it follows from Lemma \ref{prop:partition_xi} that  $\zeta$ achieves the entropy on stable manifolds. The main task left in proving \eqref{conditional_entropy} is to show that the full entropy is  concentrated on stable manifolds. In Section 2,  Pesin theory for general $C^{1+\alpha}$ maps has been established: 
the Pesin sets and the Lyapunov neighborhoods are constructed on which $f$ exhibits uniform (partial) hyperbolicity for the normalization.  The proof in Section 4.2 of \cite{Liu08} and Proposition 5.1 of \cite{LY1} can therefore be adapted to the present setting by working with the inverse limit space. More specifically, consider a finite entropy partition adapted to the Lyapunov neighborhoods as constructed in Lemma 4.2.1 of \cite{Liu08}. One then considers the following two cases: 
\begin{itemize}
\item[-] In the the hyperbolic case (i.e., when all Lyapunov exponents are non-zero),  Section 4.2 of \cite{Liu08} shows that any point $\bar{y}$ remaining in the same Lyapunov neighborhoods as $\bar x$ under forward iterations must lie on the stable manifold of $\bar x$ within the neighborhood; that is, $y_0\in V^s(x_0)$;
\vspace{1mm}

\item[-] In the non-hyperbolic case (i.e., when some Lyapunov exponents vanish), an argument analogous to Proposition 5.1 of \cite{LY1} applies. One considers a submanifold $\mathcal{T}$ transverse to the $W^s$-foliation. If $\mathcal{T}$ returns to a fixed Pesin block at infinitely many times $n_j$, then the expansion of the inverse maps $\tilde{f}^j:=\tilde f_{x,n_j}$ (as defined in  Lemma \ref{prop:f-under-local-chart}) along $ f^{n_j}(\mathcal{T})$ is bounded by $ce^{j\vep}$ for any  small $\vep$, where $c$ is a uniform constant.  
Projecting onto $\mathcal{T}$ within the Lyapunov neighborhoods, together with the Lipschitz continuity of stable foliation, implies that the transversal direction does not contribute to the entropy.  
\end{itemize}
We remark that the Lipschitz continuity of stable foliation, which holds under $C^2$ smoothness in \cite{LY1, Liu08}, is now known to hold under $C^{1+\alpha}$ smoothness as well \cite{Brown}.  

\medskip
With the establishment of \eqref{conditional_entropy}, the proof is reduced to showing that the equation 
\begin{align*}
H_\mu(\zeta|f^{-1}\zeta)=F_\mu(f)-\sum\nolimits_{\lambda_i<0}\lambda_i 
\end{align*}
implies  $\mu^\zeta_x\ll\lambda^s_x$ for $\mu$-a.e. $x\in M$, i.e., there exists a  measurable function $\psi$ such that for $\mu$-a.e. $x,$ 
\begin{align*}
d\mu^\zeta_x(y)=\psi (y)d\lambda^s_x(y),\quad\text{$\mu^\zeta_x$-a.e. $y\in\zeta(x).$}
\end{align*} 
Note that for $x\in I,$ $\zeta(x)=\{x\}$ and hence $\psi(x)=1.$
{In the following,  
we consider only $x\in\Delta_2.$}

For each $x\in\Delta_2$, denote  
\begin{align*}
\Delta(x,y)= \Theta(x,y) \prod_{k=0}^{\infty}\dfrac{D^s(f^ky)}{D^s(f^kx)} ,\quad\text{$\lambda^s_x$-a.e.\ $y\in\zeta(x)$},
\end{align*}
and define
\begin{eqnarray}\label{def_h}
\psi(y)=\dfrac{\Delta(x,y)}{\int_{\zeta(x)}\Delta(x,z)d\lambda^s_x(z)},\quad\text{$\lambda^s_x$-a.e.}\ y\in\zeta(x).
\end{eqnarray}

\begin{Lem}\label{lem:Delta_transfer}
For $\mu$-a.e.  $x\in\Delta_2$ and $m\ge1,$ it holds that
\begin{eqnarray*}
\int_{(f^{-m}\zeta)(x)}\Delta(x,y)d\lambda^s_x(y)=\dfrac{J_f(x)\cdots J_f(f^{m-1}x)}{D^s(x)\cdots D^s(f^{m-1}x)}\int_{\zeta(f^mx)}\Delta(f^mx,y)d\lambda^s_{f^mx}(y).
\end{eqnarray*}
\end{Lem}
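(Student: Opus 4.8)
The plan is to establish the identity for general $m$ directly, via a leafwise change of variables combined with a cocycle identity for $\Delta$, and to read off the stated telescoped form. Write $D^s_m(z)=\prod_{k=0}^{m-1}D^s(f^kz)=\big|\det(D_zf^m|_{E^s(z)})\big|$ and $J_{f^m}(z)=\prod_{k=0}^{m-1}J_f(f^kz)$, and set $I_m(x)=\int_{(f^{-m}\zeta)(x)}\Delta(x,y)\,d\lambda^s_x(y)$; the claim then reads $I_m(x)=\dfrac{J_{f^m}(x)}{D^s_m(x)}\,I_0(f^mx)$ for $\mu$-a.e.\ $x\in\Delta_2$.

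First I would record the cocycle identity for $\Delta$. Straight from the definition of $\Theta$ one has $\Theta(x,y)=\big(\prod_{k=0}^{m-1}\tfrac{J_f(f^kx)}{J_f(f^ky)}\big)\Theta(f^mx,f^my)$, while trivially $\prod_{k\ge0}\tfrac{D^s(f^ky)}{D^s(f^kx)}=\tfrac{D^s_m(y)}{D^s_m(x)}\prod_{k\ge0}\tfrac{D^s(f^k(f^my))}{D^s(f^k(f^mx))}$, all products converging by Lemma~\ref{prop:bound} and condition (H)''. Multiplying, $\Delta(x,y)=\tfrac{J_{f^m}(x)}{J_{f^m}(y)}\cdot\tfrac{D^s_m(y)}{D^s_m(x)}\cdot\Delta(f^mx,f^my)$ for $\lambda^s_x$-a.e.\ $y\in(f^{-m}\zeta)(x)$.

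Next comes the change of variables along stable leaves. Since $E^s(y)=T_yW^s(y)$, the map $f^m\colon W^s(x)\to W^s(f^mx)$ is $C^1$ with leafwise Jacobian exactly $D^s_m$; by Lemma~\ref{Pesin-set}(iv) and Lemma~\ref{lem:non-degenerate-neighborhood}, $D^s_m>0$ on $W^s(x)$ and is bounded below on relatively compact pieces, so $f^m|_{W^s(x)}$ is locally injective and absolutely continuous in both directions. Applying the area formula to the function on $W^s(x)$ equal to $\Delta(x,y)/D^s_m(y)$ on $(f^{-m}\zeta)(x)$ and $0$ elsewhere, and substituting the cocycle identity (the $D^s_m(y)$ factors cancel), yields
$$I_m(x)=\frac{J_{f^m}(x)}{D^s_m(x)}\int_{\zeta(f^mx)}\Delta(f^mx,w)\Big(\sum_{y\in W^s(x):\,f^my=w}\frac{1}{J_{f^m}(y)}\Big)\,d\lambda^s_{f^mx}(w),$$
where I used that for $w\in\zeta(f^mx)\subset W^s(f^mx)$ the condition $y\in(f^{-m}\zeta)(x)$ depends only on $w$, and that every $f^m$-preimage $y$ of such a $w$ with $D_yf^m$ invertible has a regular forward orbit, hence lies in $W^s(x)$, so the sum runs over \emph{all} such preimages. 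It then remains to see $\sum_{y:\,f^my=w}J_{f^m}(y)^{-1}=1$ for $\lambda^s_{f^mx}$-a.e.\ $w\in\zeta(f^mx)$: this is \eqref{sum=1} for $f^m$, obtained from \eqref{sum=1} for $f$ by the telescoping $J_{f^m}=J_f\cdot(J_{f^{m-1}}\circ f)$, using that $\xi$ (hence $\zeta$) is increasing to transport the $\lambda^s$-a.e.\ validity of \eqref{sum=1} onto the preimage pieces via the absolute continuity above. Since $f^mx$ is $\mu$-generic and $\zeta(f^mx)\subset\xi(f^mx)$, the inner sum equals $1$ a.e., giving the lemma.

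I expect the main difficulty to lie precisely in the two interlocking technical points sketched in the last step: making the leafwise change of variables rigorous on the \emph{immersed, non-injective} stable manifolds of a \emph{degenerate} map — which is exactly where the integrability hypothesis \eqref{integrable} and the extra control (iv) built into the Pesin sets of Lemma~\ref{Pesin-set} are needed to keep $D^s_m$ from collapsing — and verifying that identity \eqref{sum=1} (equivalently, its $f^m$-analogue) persists $\lambda^s$-a.e.\ on the pieces of $W^s$ that arise as preimages of the partition elements $\zeta(f^mx)$, where the increasing property of $\xi$ is essential.
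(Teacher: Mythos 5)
Your overall strategy is the right one and matches the paper's in its core ingredients: a leafwise change of variables with Jacobian $D^s$, the cocycle identity $\Delta(x,y)=\tfrac{J_{f^m}(x)}{J_{f^m}(y)}\tfrac{D^s_m(y)}{D^s_m(x)}\Delta(f^mx,f^my)$, and the normalization \eqref{sum=1}. The execution differs, however, and the difference is where the gap lies. The paper proves only $m=1$, and it does so by covering $\zeta(fx)$ by small sets, splitting $f^{-1}U_y$ into \emph{finitely many} diffeomorphic branches that stay away from a $\delta$-neighborhood $V_\delta$ of $\Sigma_f$ plus a residual piece $\tilde V_{\delta,y}$ meeting $V_\delta$, changing variables branch by branch, and then letting $\delta\to 0$: the branch sums $\sum_i J_f(f_i^{-1}y)^{-1}$ converge to $1$ by \eqref{sum=1}, while the residual integral vanishes because $\prod_{n\ge 0}D^s(f^nz)/D^s(f^nx)$ contains the factor $D^s(z)/D^s(x)\to 0$ as $z$ approaches $\Sigma_f$ and $\Theta(x,\cdot)$ is integrable by (H)''. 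This error-term analysis is essentially the entire content of the paper's proof, and it is exactly the part you defer to a closing caveat. Your wholesale appeal to the area formula can, for a nonnegative integrand, absorb the countably many branches in one stroke, so this part of your argument is repairable; but you would still need to verify that $\Delta(x,\cdot)$ is defined (both infinite products converge) and that the cocycle identity holds $\lambda^s_x$-a.e.\ on all of $(f^{-m}\zeta)(x)$, including on the preimage branches passing arbitrarily close to $\Sigma_f$ — which is where Lemma \ref{prop:bound} and the integrability in (H)'' must be invoked, not Lemma \ref{Pesin-set}(iv) and Lemma \ref{lem:non-degenerate-neighborhood}, since those only control a neighborhood of the forward orbit of the base point, not arbitrary backward branches.

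The genuine gap is your final step, $\sum_{y:\,f^my=w}J_{f^m}(y)^{-1}=1$. Telescoping $J_{f^m}=J_{f^{m-1}}\cdot(J_f\circ f^{m-1})$ gives $\sum_{y:f^my=w}J_{f^m}(y)^{-1}=\sum_{z:fz=w}J_f(z)^{-1}\sum_{y:f^{m-1}y=z}J_{f^{m-1}}(y)^{-1}$, so besides \eqref{sum=1} at $w\in\xi(f^mx)$ (which (H)'' does provide, $f^mx$ being generic) you need \eqref{sum=1} at \emph{every intermediate preimage} $z$ of $w$, and recursively at their preimages. Condition (H)'' only asserts \eqref{sum=1} for $\lambda^s_x$-a.e.\ $y\in\xi(x)$ for $\mu$-a.e.\ $x$; the preimage branches of $\xi(f^mx)$ under $f^{j}$ are generally \emph{not} contained in $\xi$-elements of generic points (the increasing property gives $f(\xi(x))\subseteq\xi(fx)$, i.e.\ it controls forward images, not the full set of backward branches), so the a.e.\ validity cannot simply be "transported" as you claim. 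This is precisely why the paper's argument is organized so that \eqref{sum=1} is only ever invoked at points of the target element $\zeta(fx)\subseteq\xi(fx)$, one step of $f$ at a time. To make your $m$-step version work you would either have to prove the $f^m$-analogue of \eqref{sum=1} on the relevant leafwise-full-measure set from scratch, or restructure the proof as the paper does (establish $m=1$ and iterate), in which case the iteration itself requires the one-step identity with $\zeta$ replaced by $f^{-j}\zeta$ and a separate justification of \eqref{sum=1} on those larger elements.
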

\begin{proof} We only prove the case $m=1;$  the case  $m>1$  is similar.  

For any $\delta>0$ let $V_\delta$ be an open neighborhood of $\Sigma_f$ in $M$ with 
 diameter less than $\delta.$ 
For any  open set $U\subseteq M,$ denote  $U^{-1,c}$   the set of connected
components of the preimage 
$f^{-1}U$ in $M$. 
Given any $y\in\zeta(fx)$ and open neighborhood 
$U_y$ of $y,$ there exists $L_y\ge1$ such that
\begin{align*}
f^{-1}U_{y}
=\big(\cup_{i=1}^{L_y}V_{y^{-1}_i}\big)\cup \tilde V_{\delta,y},
\end{align*}
where 
$\tilde V_{\delta,y}=\cup_{V\in U_y^{-1,c}:V\cap V_\delta\neq\emptyset}V,$ and for each $1\le i\le L_y,$ 
$y^{-1}_i\in f^{-1}\{y\}\backslash V_\delta$ and $V_{y_i^{-1}}\in U^{-1,c}_y$ is an open neighborhood of $y_i$ 
in $M$ such that  $f_{i}:=f|_{V_{y_i^{-1}}}:V_{y_i^{-1}}\to U_{y}$ is a diffeomorphism. Then for any Borel set $B\subseteq U_y\cap \zeta(fx),$ 
\begin{eqnarray*}
&&\int_{f^{-1}B}\Delta(x,z)d\lambda^s_x(z)\\
&=&
\sum_{i=1}^{L_y}\int_{V_{y_i^{-1}}\cap f^{-1}B}\Delta(x,z)d\lambda^s_x(z)+\int_{\tilde V_{\delta,y}\cap f^{-1}B}\Delta(x,z)d\lambda^s_x(z)\\
&=&\sum_{i=1}^{L_y}
\int_{U_y\cap B}\Delta(x,f_i^{-1}y)\cdot(1/D^s(f_i^{-1}y))d\lambda^s_{fx}(y)\\
&&+\int_{\tilde V_{\delta,y}\cap f^{-1}B}\Delta(x,z)d\lambda^s_x(z)\\
&=&\sum_{i=1}^{L_y}\int_{U_y\cap B}\prod_{n=0}^\infty\dfrac{J_f(f^nx)}{J_f(f^n(f_i^{-1}y))}\cdot\dfrac{D^s(f^n(f_i^{-1}y))}{D^s(f^nx)}\cdot\dfrac{1}{D^s(f_i^{-1}y)}d\lambda^s_{fx}(y)\\
&&+\int_{\tilde V_{\delta,y}\cap f^{-1}B}\Delta(x,z)d\lambda^s_x(z)\\
&=&\dfrac{J_f(x)}{D^s(x)}\int_{U_y\cap B}\big(\sum_{i=1}^{L_y}\dfrac{1}{J_f(f_i^{-1}y)}\big)\Delta(fx,y)d\lambda_{fx}^s(y)+\int_{\tilde V_{\delta,y}\cap f^{-1}B}\Delta(x,z)d\lambda^s_x(z). 
\end{eqnarray*}

Note that for any $\delta>0$, we can take a finite partition  
of $\zeta(fx)$ by Borel sets $B_{y_1},...,B_{y_k},$ where each  
$B_{y_j}=U_{y_j}\cap\zeta(fx)$, such that for each $1\le j\le k$, $\tilde V_{\delta,y_j}$ is contained within a neighborhood of $\Sigma_f$ with diameter no greater than $2\delta.$ Hence,
\begin{eqnarray*}
\int_{(f^{-1}\zeta)(x)}\Delta(x,z)d\lambda^s_{x}(z)&=&\dfrac{J_f(x)}{D^s(x)}\sum_{j=1}^k\int_{B_{y_j}}\big(\sum_{i=1}^{L_{y_j}}\dfrac{1}{J_f(f_i^{-1}y)}\big)\Delta(fx,y)d\lambda_{fx}^s(y)\\
&&+\int_{\cup_{j=1}^k\tilde V_{\delta,y_j}\cap
\zeta(x)}\Delta(x,z)d\lambda^s_x(z).
\end{eqnarray*}

By condition (H)'', for $\lambda^s_x$-a.e. $y\in\zeta(fx),$
$\sum_{i=1}^{L_{y}}\dfrac{1}{J_f(f_i^{-1}y)}\to1$ as $\delta\to0,$ and hence 
\[\sum_{j=1}^k\int_{B_{y_j}}\big(\sum_{i=1}^{L_{y_j}}\dfrac{1}{J_f(f_i^{-1}y)}\big)\Delta(fx,y)d\lambda_{fx}^s(y)\to\int_{\zeta(fx)}\Delta(fx,y)d\lambda^s_{fx}(y),\quad \text{as}\ \delta\to0.\]
Also, since $|\det D_zf|=0$ on $\Sigma_f,$ it follows that for any $z\in V_\delta$, $D^s(z)\to0$ as $\delta\to0.$ Thus, by Lemma \ref{prop:bound}, for any $z\in \tilde V_{\delta,y_j}\cap\zeta(x),$
\[\prod_{n=0}^\infty\dfrac{D^s(f^nz)}{D^s(f^nx)}=\dfrac{D^s(z)}{D^s(x)}\prod_{n=1}^\infty\dfrac{D^s(f^nz)}{D^s(f^nx)}\to0\quad\text{as $\delta\to0,$}\] 
which, together with
the integrability of $    \Theta(x,y)=\lim\limits_{n \rightarrow\infty}   \big(\prod_{k=0}^n\frac{J_f(f^kx)}{J_f(f^ky)}\big)\frac{\psi(f^{n+1}y)}{\psi(f^{n+1}x)}  $ in  
condition (H)'', yields
\begin{eqnarray*}
&&\int_{\cup_{j=1}^k\tilde V_{\delta,y_j}\cap
\zeta(x)}\Delta(x,z)d\lambda^s_x(z)\\
&=&\int_{\cup_{j=1}^k\tilde V_{\delta,y_j}\cap
\zeta(x)}  \Theta(x,y)\cdot\prod_{n=0}^\infty\Big(\dfrac{D^s(f^nz)}{D^s(f^nx)}\Big)d\lambda^s_x(z)\to0
\end{eqnarray*}
as $\delta\to0.$
Therefore, 
\begin{eqnarray*}
\int_{(f^{-1}\zeta)(x)}\Delta(x,z)d\lambda^s_x(z)=\dfrac{J_f(x)}{D^s(x)}\int_{\zeta(fx)}\Delta(fx,y)d\lambda^s_{fx}(y).
\end{eqnarray*}

This completes the proof for the case $m=1.$  
The case   $m>1$ follows similarly by replacing $f^{-1}\zeta$ with $f^{-m}\zeta$ in the above arguments. 
\end{proof}

The following considers the inverse limit system $(\bar M,\tau,\bar\mu)$ of $(M,f,\mu)$, and let $\bar\zeta=\pi^{-1}\zeta.$ 
Since, by Lemma \ref{prop:partition_xi},  $\vee_{n=0}^\infty\tau^n\bar\zeta$ is a partition of $\bar M$ into single points, we can define a Borel probability $\bar\nu$ on $\bar M$ 
via the  refining process 
$\{\tau^n\bar\zeta:n\ge1\}.$ 
To be specific, 
consider each element of $\bar\zeta$, which is of form 
$[A_0]=\{\bar y=(\cdots,y_0,\cdots):y_0\in A_0\}$ with $A_0\in\zeta,$ let $\bar\nu([A_0])=\bar\mu([A_0])$.  
Note that for any $\bar x=(\cdots,x_{-1},x_0,x_1,\cdots)\in\bar M$ and any  $p,q\ge0,$ 
the cylinder sets  
\[[A_{-p,...,q}]=\big\{\bar y=(\cdots,y_{-1},y_0,y_1,\cdots):y_{i}\in A_{i},i=-p,...,q\big\}\]
where $A_i\in\zeta$ for $-p\le i\le q$ and  $A_0=\zeta(x_0),$    
is a refinement (the  refinement actually happens for $-p\le i\le0$) of the element 
$\bar\zeta(\bar x)$ of $\bar\zeta$. 
Define the conditional measure of $\bar\nu$ on  $\bar\zeta(\bar x)$ as 
\begin{eqnarray}\label{nu_conditional}
 \bar\nu^{\bar\zeta}_{\bar x}([A_{-p,...,q}])=\dfrac{\int_{\tilde A}\Delta(x_{-p},y)d\lambda^s_{x_{-p}}(y)}{\int_{(f^{-p}\zeta)(x_{-p})}\Delta(x_{-p},y)d\lambda^s_{x_{-p}}(y)},   
\end{eqnarray}
where 
\[\tilde A=\{y\in(f^{-p}\zeta)(x_{-p}):y\in A_{-p},fy\in A_{-p+1},\cdots,f^{p+q}y\in A_q\}.\] 
From the proof of Lemma \ref{lem:Delta_transfer}, we see  that the
$\nu^{\bar\zeta}_{\bar x}$-measure of each cylinder set is  independent of the pull-back time $p$,  ensuring that $\nu^{\bar\zeta}_{\bar x}$ is  well-defined. Thus, the Borel probability 
 measure  $\bar\nu$ on $\bar M$ is  uniquely  determined and is $\tau$-invariant.

\begin{Lem}\label{lem:nu_entropy_eqaulity}
 For any $n\ge1,$
\begin{eqnarray*}
\dfrac{1}{n}\int_{\bar M}-\log\bar\nu^{\bar\zeta}_{\bar x}\big((\tau^n\bar\zeta)(\bar x)\big)d\bar\mu(\bar x)= F_\mu(f)-\sum\nolimits_{i:\lambda_i<0}\lambda_i.
\end{eqnarray*}
\end{Lem}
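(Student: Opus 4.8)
The plan is to turn the identity \eqref{nu_conditional} into an explicit cocycle formula for the ``information'' $-\log\bar\nu^{\bar\zeta}_{\bar x}\big((\tau^n\bar\zeta)(\bar x)\big)$ and then integrate. We may assume $\mu$ is ergodic, and also that $\lambda_1<0$ (otherwise $W^s(x)=\{x\}$ $\mu$-a.e.\ and every quantity vanishes). Then $\bar\mu\big(\bigcap_{n\ge0}\tau^{-n}\pi^{-1}\Delta_2\big)=1$, so for $\bar\mu$-a.e.\ $\bar x$ one has $x_{-n}\in\Delta_2$ for all $n$. The first step is the combinatorial identification of the partition element: since $\zeta$ is increasing, $\zeta(x_{-n})\subseteq(f^{-n}\zeta)(x_{-n})$, and $y\in\zeta(x_{-n})$ forces $f^jy\in\zeta(x_{-n+j})$ for $0\le j\le n$; on the other hand $(\tau^n\bar\zeta)(\bar x)=\{\bar z\in\bar M:\,z_{-n}\in\zeta(x_{-n})\}$, which by the same increasing property equals the cylinder $[A_{-n},\dots,A_0]$ with $A_i=\zeta(x_i)$. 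Feeding $p=n$, $q=0$ into \eqref{nu_conditional}, the set $\tilde A$ there reduces exactly to $\zeta(x_{-n})$, whence
\[
\bar\nu^{\bar\zeta}_{\bar x}\big((\tau^n\bar\zeta)(\bar x)\big)
=\frac{\int_{\zeta(x_{-n})}\Delta(x_{-n},y)\,d\lambda^s_{x_{-n}}(y)}{\int_{(f^{-n}\zeta)(x_{-n})}\Delta(x_{-n},y)\,d\lambda^s_{x_{-n}}(y)}.
\]

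Next I would apply Lemma \ref{lem:Delta_transfer} with base point $x_{-n}$ and iterate $n$ (using $f^kx_{-n}=x_{-(n-k)}$ and $f^nx_{-n}=x_0$) to rewrite the denominator, obtaining, $\bar\mu$-a.e.,
\[
-\log\bar\nu^{\bar\zeta}_{\bar x}\big((\tau^n\bar\zeta)(\bar x)\big)
=\sum_{j=1}^{n}\big(\log J_f(x_{-j})-\log D^s(x_{-j})\big)+G(\bar x)-G(\tau^{-n}\bar x),
\]
where $G(\bar x):=\log\int_{\zeta(x_0)}\Delta(x_0,y)\,d\lambda^s_{x_0}(y)$ depends only on $x_0$, so that $G(\tau^{-n}\bar x)=\log\int_{\zeta(x_{-n})}\Delta(x_{-n},y)\,d\lambda^s_{x_{-n}}(y)$. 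Integrating against $\bar\mu$ and using $\tau$-invariance together with $\bar\mu\circ\pi^{-1}=\mu$, each of the $n$ cocycle terms contributes $\int_M(\log J_f-\log D^s)\,d\mu$, while the two boundary terms cancel, $\int_{\bar M}(G-G\circ\tau^{-n})\,d\bar\mu=0$. It then remains to identify $\int_M\log J_f\,d\mu=H_\mu(\epsilon\,|\,f^{-1}\epsilon)=F_\mu(f)$ via \eqref{log_J}, and $\int_M\log D^s\,d\mu=\int_M\log|\det(D_zf|_{E^s(z)})|\,d\mu(z)=\sum_{i:\lambda_i<0}\lambda_i$, which follows by combining the Birkhoff ergodic theorem for $\log D^s$ (integrable, as recorded in the necessity part) with Theorem \ref{thm:MET}, since $E^s$ is $Df$-invariant and is the direct sum of the Oseledets subspaces with negative exponents. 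Dividing by $n$ yields the claim.

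The step I expect to be the real obstacle is justifying $\int_{\bar M}(G-G\circ\tau^{-n})\,d\bar\mu=0$, i.e.\ showing $G\in L^1(\bar\mu)$, equivalently $g(x):=\log\int_{\zeta(x)}\Delta(x,y)\,d\lambda^s_x(y)\in L^1(\mu)$. Here I would exploit condition (H)'': with $\zeta=\eta\vee\xi$ one has $\zeta(x)\subseteq\xi(x)$, hence $\int_{\zeta(x)}\Theta(x,y)\,d\lambda^s_x(y)\le\int_{\xi(x)}\Theta(x,y)\,d\lambda^s_x(y)<\infty$ for $\mu$-a.e.\ $x$; combined with Lemma \ref{prop:bound} applied on the finer partition $\zeta$ — which, for each such $x$, bounds $\prod_{k\ge0}D^s(f^kz)/D^s(f^kx)$ above and below by positive constants uniformly over $z\in\zeta(x)$, so that $\Delta(x,\cdot)$ and $\Theta(x,\cdot)$ are comparable on $\zeta(x)$ — and the positivity of $\lambda^s_x(\zeta(x))$, this already gives $g$ finite $\mu$-a.e. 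Upgrading this to $\mu$-integrability of $g$ is where the argument has real content; alternatively one can avoid $g\in L^1$ by showing directly that the coboundary $g\circ f-g$ is $\mu$-integrable — by the case $m=1$ of Lemma \ref{lem:Delta_transfer} this amounts to integrability of the non-negative function $\log\big(\int_{(f^{-1}\zeta)(x)}\Delta(x,\cdot)\,\big/\int_{\zeta(x)}\Delta(x,\cdot)\big)$, which via the change-of-variables in that proof and \eqref{sum=1} can be compared to $\log J_f$ — and then invoke Proposition~2.2 of \cite{Le-S} to conclude $\int_{\bar M}(G-G\circ\tau^{-n})\,d\bar\mu=0$, exactly as in the necessity part.

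Everything else — the increasing/subordinate bookkeeping for $\zeta$, the application of Lemma \ref{lem:Delta_transfer}, and the two closing identifications — is routine once the integrability of the boundary term is in place; the finiteness $h_\mu(f)<\infty$ and $F_\mu(f)\le h_\mu(f)$ from Lemma \ref{lem:conditional-entropy-upper-bound} guarantee the right-hand side is finite, so the computation is consistent.
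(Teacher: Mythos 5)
Your proposal follows essentially the same route as the paper's proof: identify $(\tau^n\bar\zeta)(\bar x)$ with the cylinder over $\zeta(x_{-n})$, apply the definition \eqref{nu_conditional} and Lemma \ref{lem:Delta_transfer} to get $q_n(\bar x)=\frac{g(\tau^{-n}\bar x)}{g(\bar x)}\prod_{i=1}^n\frac{D^s(x_{-i})}{J_f(x_{-i})}$, kill the coboundary term, and identify $\int\log J_f\,d\mu=F_\mu(f)$ and $\int\log D^s\,d\mu=\sum_{\lambda_i<0}\lambda_i$. The "obstacle" you flag is resolved in the paper exactly by your second alternative — using $q_n\le 1$ (equivalently, monotonicity of the integrals under partition refinement) to show the relevant one-sided integrability of $\log\frac{g\circ\tau^{-n}}{g}$ and then concluding its integral vanishes by $\tau$-invariance, in the spirit of Proposition 2.2 of \cite{Le-S} — so your argument is correct and matches the paper's.
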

\begin{proof}  
We adopt the approach of Lemma 4.1.5 in \cite{Liu08} for the integrable setting. 

For any $\bar x=(\cdots,x_{-1},x_0,x_1,\cdots)\in\bar M,$ denote \[g(\bar x)=
\int_{\zeta(x_0)}\Delta(x_0,y)d\lambda^s_{x_0}(y),\]
and for any $n\ge1,$ let  $q_n(\bar x)=\bar\nu^{\bar\zeta}_{\bar x}\big((\tau^n\bar\zeta)(\bar x)\big).$ 
By definition  of $\bar\nu^{\bar\zeta}_{\bar x}$, 
\begin{eqnarray*}
q_n(\bar x)
=\dfrac{g(\tau^{-n}\bar x)}{\int_{(f^{-n}\zeta)(x_{-n})}\Delta(x_{-n},y)d\lambda^s_{x_{-n}}(y)}.
\end{eqnarray*}
Since by Lemma \ref{lem:Delta_transfer}
\begin{eqnarray*}
\int_{(f^{-n}\zeta)(x_{-n})}\Delta(x_{-n},y)d\lambda^s_{x_{-n}}(y)&=&\dfrac{J_f(x_{-n})\cdots J_f(x_{-1})}{D^s(x_{-n})\cdots D^s(x_{-1})}\int_{\zeta(x_0)}\Delta(x_0,y)d\lambda^s_{x_0}(y)\\
&=&\dfrac{J_f(x_{-n})\cdots J_f(x_{-1})}{D^s(x_{-n})\cdots D^s(x_{-1})}g(\bar x),
\end{eqnarray*}
we have \begin{eqnarray*}\label{conditional_nu}
q_n(\bar x)=\dfrac{g(\tau^{-n}\bar x)}{g(\bar x)}\prod_{i=1}^n\dfrac{D^s(x_{-i})}{J_f(x_{-i})}.
\end{eqnarray*}
Note that $q_n(\bar x)\le1.$ Combined with the $\mu$-integrability  of $\log D^s(x)$ and $\log J_f(x),$ we have $\displaystyle\int_{\bar M}\log\dfrac{g(\tau^{-n}\bar x)}{g(\bar x)}d\bar\mu(\bar x)<\infty,$ which, by the $\tau$-invariant of $\bar\mu,$ yields
\[\displaystyle\int_{\bar M}\log\dfrac{g(\tau^{-n}\bar x)}{g(\bar x)}d\bar\mu(\bar x)=0.\]
Therefore,
\begin{eqnarray*}
\int_{\bar M}-\log q_n(\bar x)d\bar\mu(\bar x)&=&\sum_{i=1}^n\int_{\bar M}\log J_f(x_{-i})d\bar\mu(\bar x)-\sum_{i=1}^n\int_{\bar M}\log D^s(x_{-i})d\bar\mu(\bar x)\\
&=&n\int_{M}\log J_f(x)d\mu( x)-n\int_{M}\log D^s(x)d\mu(x),
\end{eqnarray*}
where the second equality follows from the invariance of $\mu$.
Note that 
\[F_\mu(f)=\int\log\mu_x^{f^{-1}\epsilon}(\epsilon(x))=\int_{M}\log J_f(x)d\mu(x),\]
and we hence have 
\begin{eqnarray*}
\dfrac{1}{n}\int_{\bar M}-\log\bar\nu^{\bar\zeta}_{\bar x}\big((\tau^n\bar\zeta)(\bar x)\big)d\bar\mu(\bar x)&=&\dfrac{1}{n}\int_{\bar M}-\log q_n(\bar x)d\bar\mu(\bar x)\\
&=& F_\mu(f)-\sum\nolimits_{\lambda_i<0}\lambda_i. 
\end{eqnarray*}
\end{proof}

We are now prepared  to complete the proof of 
Theorem \ref{thm1}.
\medskip

\noindent{\it  The sufficiency proof of Theorem \ref{thm1}}: 
For any $n\ge1,$ 
\begin{eqnarray*}
H_{\bar\mu}(\tau^n\bar\zeta|\bar\zeta)=H_\mu(\zeta|f^{-n}\zeta)=nh_\mu(f),
\end{eqnarray*}
where the last equality follows from Proposition \ref{prop:entropy}. Hence, if \eqref{eq non-inv of folding type} holds then
\begin{eqnarray*}
\dfrac{1}{n}H_{\bar\mu}(\tau^n\bar\zeta|\bar\zeta)=F_\mu(f)-\sum\nolimits_{\lambda_i<0}\lambda_i.
\end{eqnarray*}
Combined with Lemma \ref{lem:nu_entropy_eqaulity}, this implies  
\begin{eqnarray*}
\int_{\bar M}-\log\bar\nu^{\bar\zeta}_{\bar x}\big((\tau^n\bar\zeta)(\bar x))d\bar\mu(\bar x\big)=H_{\bar\mu}(\tau^n\bar\zeta|\bar\zeta).
\end{eqnarray*}
Since it also holds that
\begin{eqnarray*}
H_{\bar\mu}(\tau^n\bar\zeta|\bar\zeta)=\int_{\bar M}-\log\bar\mu^{\bar\zeta}_{\bar x}\big((\tau^n\bar\zeta)(\bar x))d\bar\mu(\bar x\big),
\end{eqnarray*}
it follows that  
\begin{eqnarray*}
\int_{\bar M}\log\Big(\dfrac{\bar\nu^{\bar\zeta}_{\bar x}\big((\tau^n\bar\zeta)(\bar x))}{\bar\mu^{\bar\zeta}_{\bar x}\big((\tau^n\bar\zeta)(\bar x))}\Big)d\bar\mu(\bar x\big)=0.
\end{eqnarray*}
Let 
$\varphi=\dfrac{d\bar\nu^{\tau^n\bar\zeta}_{\bar x}}{d\bar\mu^{\tau^n\bar\zeta}_{\bar x}}
\Bigg|_{\tau^n\bar\zeta}.$ By the convexity of $\log$, we have 
\begin{eqnarray*}
\int_{\bar M}\log\Big(\dfrac{\bar\nu^{\bar\zeta}_{\bar x}\big((\tau^n\bar\zeta)(\bar x))}{\bar\mu^{\bar\zeta}_{\bar x}\big((\tau^n\bar\zeta)(\bar x))}\Big)d\bar\mu(\bar x)=\int_{\bar M}\log\varphi d\bar\mu\le\log\Big(\int_{\bar M}\varphi d\bar\mu\Big)=0
\end{eqnarray*}
in which the inequality ``$\le$" becomes  an equality if and only if $\varphi\equiv1$  on $\tau^n\bar\zeta(\bar x)$ for $\bar\mu$-a.e. $\bar x$, that is,
$\bar\nu=\bar\mu$ on $\mathcal B(\tau^n\bar\zeta).$

The above argument applies for all $n\ge1$. Combined with the fact that  $\vee_{n=0}^\infty\tau^n\bar\zeta$ partitions $\bar M$ into single points,
it follows that
$\bar\nu=\bar\mu,$ and hence $\mu=\pi\bar\mu=\pi\bar\nu.$ Therefore, by the construction of $\bar\nu,$
$\mu$ has absolutely continuous conditional measures on the stable manifolds. \qed

\section{Applications}\label{app}

\subsection{Conservative systems using the ``only if " part} Let  $M$ be a Riemannian manifold without boundary and equipped with the volume measure, denoted by $\vol$, induced by its Riemannian structure. Theorem  \ref{thm1} applies to any  volume-preserving map $f$ on $M$, provided that 
\begin{eqnarray}\label{log_det}
\int \log |\det(D_xf)| d\vol<\infty.
\end{eqnarray}
The  non-uniformly hyperbolic  volume-preserving endomorphism  constructed by Andersson, Carrasco, and Saghin in \cite{ACS} is such an example.  Let $f$ be a general volume-preserving map $f$ on $M$ satisfying (\ref{log_det}). 
Since $\vol$ admits absolutely continuous conditional measures on the stable manifolds, the  ``only if'' part  of Theorem \ref{thm1} implies that the entropy formula of folding type  
\[h_{\vol}(f)= F_{\vol}(f)-\displaystyle\int\sum\nolimits_{\lambda_i(x)<0} \lambda_i(x)d\vol(x).
\]
holds, and that the condition (H)'' is satisfied.

As a direct corollary of Remark \ref{Rem1}, the following holds. 
\begin{Prop}
Let $f:M\to M$ be a  volume-preserving map  such that \eqref{log_det} holds. Then the entropy production $e_{\vol}(f)=0.$
\end{Prop}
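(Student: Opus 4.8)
The plan is to derive this Proposition directly from Remark \ref{Rem1}, which already packages the needed implication. First I would observe that the hypotheses of Theorem \ref{thm1} are met: the manifold $M$ is compact, $f$ is a $C^{1+\alpha}$ volume-preserving map, and $\mu = \vol$ is an $f$-invariant Borel probability measure. The integrability condition \eqref{integrable} for the Jacobian is precisely \eqref{log_det}, which is assumed. So $\vol$ qualifies as an admissible invariant measure in Theorem \ref{thm1}.

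Next I would verify that $\vol$ is simultaneously an SRB measure and an inverse SRB measure. Since $\vol$ is the Riemannian volume, its conditional measures on \emph{any} smooth foliation --- in particular on the unstable foliation $W^u$ and on the stable foliation $W^s$ --- are absolutely continuous with respect to the induced Lebesgue measures on the leaves (this is a standard consequence of Fubini's theorem applied in local charts adapted to the foliation, using the absolute continuity of the holonomy maps established in Pesin theory). Hence $\vol$ is both an SRB measure and an inverse SRB measure. This is the step where a little care is warranted: one must invoke the absolute continuity property of the stable (and unstable) foliations for $C^{1+\alpha}$ maps, which follows from the Pesin-theoretic machinery developed in Section \ref{pes} (local stable manifolds and the associated measurable partitions, Theorem \ref{lem:local_manifolds} and Lemma \ref{lem:measurable-partition}); I expect this to be the main --- though still routine --- obstacle, since it is the only place where genuine hyperbolic theory, rather than a formal deduction, is used.

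Finally, having checked that $\vol$ is an SRB measure that is also inverse SRB and satisfies the integrability condition \eqref{integrable}, I would apply Remark \ref{Rem1} verbatim: it asserts that under exactly these conditions, Theorem \ref{thm1} yields the vanishing of the entropy production, $e_{\vol}(f) = 0$. (Equivalently, one can run the short computation displayed in the introduction: $\vol$ always satisfies the Pesin formula \eqref{pesin eq}, the ``only if'' part of Theorem \ref{thm1} gives the folding-type formula \eqref{eq non-inv of folding type}, and subtracting the two identities from the definition $e_{\vol}(f) = F_{\vol}(f) - \int \log|\det D_x f|\, d\vol$ gives $e_{\vol}(f) = F_{\vol}(f) - \int \sum_{\lambda_i < 0}\lambda_i\, d\vol - \int \sum_{\lambda_i > 0}\lambda_i\, d\vol = F_{\vol}(f) - \int\sum_{\lambda_i<0}\lambda_i\,d\vol - h_{\vol}(f) = 0$.) This completes the proof.
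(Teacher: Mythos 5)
Your argument is correct and is essentially the paper's own proof: the paper likewise observes that $\vol$ has absolutely continuous conditional measures on the stable manifolds, applies the ``only if'' part of Theorem~\ref{thm1} to obtain the folding-type entropy formula, and then invokes Remark~\ref{Rem1} (i.e., the computation from the introduction combining the Pesin formula for the SRB measure $\vol$ with the folding-type formula) to conclude $e_{\vol}(f)=0$. The only difference is that you supply a bit more justification for why $\vol$ is simultaneously SRB and inverse SRB, a point the paper simply asserts.
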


\subsection{Dissipative systems  using  the ``if" part}
Define  the skew-product
\[f(x,y)=(1-ax^2,  \, g(x,y)): I\times\mathbb T^2\to I\times\mathbb T^2,\]
where $I=[-1,1]$, $1<a\le 2$, and $g: I\times \mathbb{T}^2\to  \mathbb{ T}^2$ satisfies
\begin{eqnarray*}
 g(x,y)= g_1(y)+g_2(x,y),  
\end{eqnarray*}
with $g_1:\mathbb{T}^2\to\mathbb T^2$ an Anosov linear endomorphism and $g_2 : I \times   \mathbb{T}^2 \to  \mathbb{T}^2$ being $C^{1+\alpha}$-close to zero. The SRB property of the first component of $f,$ denoted by
\begin{eqnarray*}
  f_1(x):=1-ax^2, 
\end{eqnarray*}
exhibits degeneracy and has been studied by
Misiurewicz  \cite{Mis81}  and  Jakobson  \cite{Jak}. Note that the map $f$  is a skew-product  over $f_1$. 
In the following,  the inverse SRB property of $f$ is investigated. 

\begin{Prop}\label{project}There exists an  inverse SRB measure    $\mu$   of $f$ such that the  entropy formula  of folding type holds:
\[h_{\mu}(f)= F_{\mu}(f)-\displaystyle\int\lambda^-(x)d\mu(x),
\] 
where $\lambda^-(x)$ denotes the unique negative Lyapunov exponent at  $x$ for $\mu$-a.e. $x$. 
\end{Prop}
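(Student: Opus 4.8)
Here is the plan for proving Proposition~\ref{project}. The strategy is to produce one $f$-invariant Borel probability $\mu$ adapted to the skew-product structure, to check for it the integrability condition~\eqref{integrable}, the folding-type entropy formula~\eqref{eq non-inv of folding type} and condition (H)'', and then to apply the ``if'' part of Theorem~\ref{thm1}, which upgrades this to the conclusion that $\mu$ is an inverse SRB measure; the displayed identity in the statement is then precisely the formula verified along the way. First I would fix $a\in(1,2]$ so that $f_1(x)=1-ax^2$ carries an ergodic absolutely continuous invariant probability $\nu=\rho\,dx$; such parameters exist by Misiurewicz~\cite{Mis81} (and form a positive-measure set by Jakobson~\cite{Jak}), with $\lambda^{+}_{\mathrm b}:=\int_I\log|f_1'|\,d\nu\in(0,\log 2]$ and with $\rho$ bounded away from a finite set of points near which it has at most inverse-square-root singularities, so that $\int_I\bigl|\log|f_1'|\bigr|\,d\nu<\infty$ since $\log|f_1'(x)|=\log|2ax|$. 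Next, because $g_1$ is a linear Anosov endomorphism of $\mathbb T^2$ and $g_2$ is $C^{1+\alpha}$-close to $0$, every fibre map $g(x,\cdot)=g_1+g_2(x,\cdot)$ is uniformly $C^{1+\alpha}$-close to $g_1$; a standard invariant-cone/graph-transform argument then yields a continuous $Df$-invariant splitting $E^s\oplus E^u$ over $I\times\mathbb T^2$ with $E^s,E^u$ tangent to the fibres and uniformly contracted, resp.\ expanded, by the fibre cocycle, together with the one-dimensional stable foliation whose leaves lie inside the fibres. (Equivalently, one feeds $f$ into the Pesin theory of Section~\ref{pes}: any $f$-invariant measure projecting to $\nu$ is non-uniformly hyperbolic with one positive exponent from the base and two fibre exponents $\lambda^u_{\mathrm f}>0>\lambda^s_{\mathrm f}=\lambda^-$, the negative Oseledets subspace being one-dimensional.)

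For the measure itself I would run a fibrewise transfer-operator scheme: starting from the normalized Lebesgue measures on the fibres and pushing forward under $f$, the uniform contraction along the stable foliation together with the H\"older regularity of $Dg$, of $\det(Dg|_{E^s})$ and of $TW^s$ (cf.\ Theorem~\ref{lem:local_manifolds}(v)) gives uniform bounded-distortion estimates, hence a limit family $\{\mu_x\}$ with $g(x,\cdot)_*\mu_x=\mu_{f_1x}$ whose conditional densities along the stable leaves are bounded above and bounded away from $0$, uniformly. Put $\mu=\int\mu_x\,d\nu(x)$, which is $f$-invariant with $\pi_*\mu=\nu$ for $\pi(x,y)=x$. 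Since $\det D_{(x,y)}f=f_1'(x)\,\det D_yg(x,\cdot)$ with $\log|\det D_yg|$ bounded and $\log|f_1'|$ $\nu$-integrable, condition~\eqref{integrable} holds; moreover $h_\mu(f)<\infty$ and $F_\mu(f)\le h_\mu(f)$ by Lemma~\ref{lem:conditional-entropy-upper-bound}. Condition (H)'' is then read off the construction, taking $\xi$ a measurable partition subordinate to the stable leaves (Lemma~\ref{lem:measurable-partition}), $\psi$ the fibrewise stable density of $\mu$, and $J_f$ the $\mu$-Jacobian of $f$ (which factorizes over $\pi$): the identity $\sum_{z:fz=y}J_f(z)^{-1}=1$ is the defining property of the Jacobian, while the product $\Theta(x,y)$ converges and is bounded on each $\xi(x)$ because the stable densities are bounded above and below and the Jacobian ratios have bounded distortion along the uniformly contracted leaves, so $\Theta(x,\cdot)\in L^1(\lambda^s_x)$ on $\xi(x)$.

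To obtain~\eqref{eq non-inv of folding type} for $\mu$ I would feed the output just constructed into the computation of Section~\ref{nes}: that computation uses only that $\mu$ has absolutely continuous conditionals with $\lambda^s_x$-integrable density along a fixed increasing measurable partition subordinate to $W^s$, which is exactly what the transfer-operator construction supplies, and it produces both the folding-type identity $h_\mu(f)=F_\mu(f)-\int\lambda^-\,d\mu$ and (again) condition (H)''. (Alternatively, one may decompose metric entropy and folding entropy over the factor $(f_1,\nu)$: for the interval factor $F_\nu(f_1)=h_\nu(f_1)=\lambda^{+}_{\mathrm b}$ by Theorem~4.1 of~\cite{LW2}, cf.\ the Remark after Lemma~\ref{lem:conditional-entropy-upper-bound}, while the uniformly hyperbolic fibre factor contributes the corresponding relative identity, using $\lambda^u_{\mathrm f}+\lambda^s_{\mathrm f}=\int\log|\det D_yg|\,d\mu$; summing the two gives the claimed formula.) With~\eqref{integrable}, the folding-type entropy formula and (H)'' all verified, the ``if'' part of Theorem~\ref{thm1} (Section~\ref{suf}) applies and shows that $\mu$ is an inverse SRB measure, which also yields the displayed identity.

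The crux of the argument is the transfer-operator step: proving that the fibrewise push-forwards converge, with stable-leaf densities uniformly bounded and uniformly positive. This rests on the bounded-distortion estimate for the fibre cocycle along the stable foliation, for which the uniform contraction from the Anosov structure and the H\"older continuity of $Dg$, $\det(Dg|_{E^s})$ and $TW^s$ are precisely what is needed; notably the degeneracy of the base never interferes here, entering only through the integrability bookkeeping for $\log|f_1'|$. A secondary technical point is to make the entropy and folding-entropy accounting over the non-invertible, degenerate base factor rigorous, for which the interval-map identity of~\cite{LW2} and the inverse-limit formalism of Sections~\ref{nes}--\ref{suf} are the relevant tools.
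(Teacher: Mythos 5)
Your overall architecture (build $\mu$, check \eqref{integrable}, the folding-type formula and (H)'', then invoke the ``if'' part of Theorem \ref{thm1}) matches the paper's, but the step you yourself identify as the crux --- the fibrewise transfer-operator construction --- does not work as described, and everything downstream leans on it. Pushing forward the normalized Lebesgue measures on the fibres under $f$ regularizes densities along the \emph{expanding} fibre direction and concentrates mass along the \emph{contracting} one; the limit family $\{\mu_x\}$ obtained this way is the fibrewise physical/SRB family, whose conditionals on \emph{stable} leaves are in general singular (this is precisely the generic positivity of entropy production). So the assertion that the limit has ``conditional densities along the stable leaves bounded above and bounded away from $0$'' is not a consequence of bounded distortion under forward iteration; to smooth along stable leaves one must iterate \emph{inverse branches}, which for this non-invertible, degenerate skew product means summing over the $2r$-fold preimage tree with suitable weights --- and controlling that limit is essentially the statement being proved. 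This also makes your appeal to the Section \ref{nes} computation circular: that computation takes as hypothesis that $\mu^{\zeta}_x\ll\lambda^s_x$ with integrable density, i.e.\ that $\mu$ is already inverse SRB.

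The paper sidesteps the direct construction of stable densities entirely. It takes the base measure to be the \emph{fair} measure $\mu_1$ of $f_1$ (not an acim), for which $h_{\mu_1}(f_1)=F_{\mu_1}(f_1)=\int\log c(f_1x)\,d\mu_1$, builds $\mu$ as a weak limit of measures weighted by $e^{S_n\phi}$ with $\phi=\Phi^s-\log r$ on $(n,\varepsilon)$-separated sets, and gets the lower bound $h_\mu(f)+\int\phi\,d\mu\ge h_{\mu_1}(f_1)$ from Katok's entropy formula, Mihailescu's estimate $P_n(f,x,\varepsilon,\Phi^s)\asymp r^n$, and upper semicontinuity of entropy (asymptotic $h$-expansiveness). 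Combining this with the preimage-counting bound $F_\mu(f)\le\log r+F_{\mu_1}(f_1)$ and the already-established folding-type Ruelle inequality forces the entropy formula \emph{and} pins down the Jacobian as $J_f(x,y)=r\,c(f_1(x))$, constant along each stable leaf, so that $\Theta\equiv1$ and (H)'' holds trivially via Remark \ref{rem:generalization}. Only then does the ``if'' part of Theorem \ref{thm1} deliver the absolute continuity of the stable conditionals. If you want to salvage your route, you would need either to replace the forward transfer operator by a genuinely backward (weighted inverse-branch) construction with a bounded-distortion proof along the preimage tree, or to adopt the pressure/counting argument; note also that with an acim base the folding-entropy accounting $F_\mu(f)\le\log r+F_{\mu_1}(f_1)$ no longer closes against the lower bound, since $F_\nu(f_1)=\lambda^+_{\mathrm b}<\int\log c(f_1x)\,d\nu$ in general.
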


\begin{proof}  Note that the map $f_1(x)=1-ax^2$ is piecewise monotone on $I$. Let $c(x)=\sharp f_1^{-1}(x)$,     where $\sharp A$ denotes the cardinality of the set $A$.       It is known that $f_1$ admits  an ergodic fair measure $\mu_{1}$ with  positive   metric entropy (\cite{RZ, Mis})  \[h_{\mu_{1}}(f_1)=\int \log c(x)d\mu_{1}(x).\]     
	
Let $r$ denote the degree of $g_1$. Since $g_2$ is close to zero, for each fixed $x\in I$,  every $y\in\mathbb{T}^2$ has $r$ preimages under the map $g_x:=g(x, \cdot):\, \mathbb{T}^2 \to  \mathbb{T}^2$. 
Let $\Phi^s= \log |\det(Df\mid_{E^s})|$, where $E^s$ denotes the Oseledets splitting corresponding to the negative Lyapunov exponents. Note that $\Phi^s$ is a  continuous function on $I\times \mathbb{T}^2$. 

Let $D\subset I$ be a subset of $I$. For any $\vep>0$ and $n\in \mathbb{N}$, a subset   $E\subset D$ is called an $(n,\vep)$-separated set  of $f_1$ on $D$ if  for any $x\neq y\in E$, there exists $0\le i\le n-1$ such that $d(f^i_1(x), f^i_1(y))>\vep$.  Let $S_n(f_1,D,\vep)$ denote an $(n,\vep)$-separated set  of $f_1$ with respect to  $D$ that has  maximal cardinality.    
In the same manner, a subset $F\subset \{x\}\times \mathbb{T}^2$ is called an $(f,x,n,\vep)$-separated set at $x$  along the fiber $\mathbb{T}^2$, if for any $y\neq z\in F$, there exists  $0\le i\le n-1$ such that $$d\big(g_{f_1^{i}(x)}\circ \cdots \circ g_{f_1(x)}\circ g_{x}(y), \,\, g_{f_1^{i}(x)}\circ \cdots \circ g_{f_1(x)}\circ g_{x}(z)\big)>\vep.$$  
Given any continuous function $\phi:I\times\mathbb T^2\to\mathbb R$ and $n\ge1,$ define 
\begin{eqnarray}\label{P}
P_n(f, x,\vep, \phi)=\max_{\substack{F\subset \mathbb{T}^2:(f,x,n,\vep)\\\text{-separated set}}} \,\,\, \sum\nolimits_{y\in F} e^{\phi_n(x,y)},  
\end{eqnarray}
where $\phi_n(x,y)=\sum_{j=0}^{n-1}\phi(f^i(x,y))$. Define
$$P_n(f, D, \vep, \phi)=\sum\nolimits_{x\in S_n(f_1,D, \vep)} P_n(f, x,\vep, \phi),$$ and 
$$P(f, D, \vep, \phi)=\limsup_{n\to \infty} \frac{1}{n} \log P_n(f, D, \vep, \phi). 
$$
	
	Let $\{\psi_1,\psi_2,\cdots\}$ be a dense subset of the space of continuous functions on $I$, and define $A_0=I, N_0=0$. For each $k\ge 1$, there exists a  subset $A_k\subset A_{k-1}$  with $\mu_1(A_k)>\mu_1(A_{k-1})-\frac{1}{2^{k+1}}$ and integers $N_k>N_{k-1}$ such that for any $x\in A_k,$
	$$\Big|\frac{1}{n}\sum_{ i=0}^{n-1}\psi_j(f_{1}^{i}(x))-\int \psi_j d\mu_1\Big|<\frac{1}{2^k},\quad \forall\,n\ge N_k,\,\, 1\le j\le k.$$
  	Let $A=\cap_{k\ge 1} A_k$. Then one has  $$\mu_1(A)\ge 1-\sum\nolimits_{k\ge 1}\frac{1}{2^{k+1}}=\frac{1}{2}.$$
	    
Henceforth, take $\phi=\Phi^s-\log r$ and $D=A$. For any $n\ge 1$ and   $\vep>0$, 
let $S_n(f, x, \vep,\phi)$ denote an $(f,x,n,\vep)$-separated set that attains the maximal value in \eqref{P}. 
Define  $$\nu_{n,\vep}=\frac{1}{P_n(f, A, \vep, \phi)} \sum\nolimits_{x\in S_n(f_1,A,\vep)}\sum\nolimits_{y\in S_n(f, x, \vep,\phi)} e^{\phi_n(x,y)}\delta_{(x,y)},$$
and $$\mu_{n,\vep}=\frac{1}{n}\sum\nolimits_{i=0}^{n-1}(f^i)_* \nu_{n,\vep},$$
where $(f^i)_* \nu_{n,\vep}=\nu_{n,\vep}\circ f^{-i}$. Note that both $\nu_{n,\varepsilon}$ and $\mu_{n,\varepsilon}$ are probability measures on $I\times\mathbb T^2$. 

Let $\pi: I\times \mathbb{T}^2 \to I$ be the projection $\pi(x,y)=x$. Note  that 
$$\pi_*(\nu_{n,\vep})=\frac{1}{P_n(f, A, \vep, \phi)} \sum\nolimits_{x\in S_n(f_1, A,\vep) } \sum\nolimits_{y\in S_n(f, x, \vep,\phi)}e^{\phi_n(x,y)}\delta_x.$$
Since $\pi\circ f=f_1\circ \pi$, it follows that 
$$\pi_*(\mu_{n,\vep})=\frac{1}{n} \sum_{i=0}^{n-1}\pi_* \circ f_*(\nu_{n,\vep})=\frac{1}{n} \sum_{i=0}^{n-1} (f_1)_*\circ\pi_*(\nu_{n,\vep}).$$ 
	By the choice of $A$,  for any $\psi_j$ and $k\ge j$,  there exists $N_k\in\N$ such that for any $n\ge N_k$,  
	$$\big|\int \psi_j d\pi_*(\mu_{n,\vep})-\int \psi_jd\mu_1\big|<\frac{1}{2^k},$$
	which implies that 
    $$\lim_{n\to \infty}\int \psi_j d\pi_*(\mu_{n,\vep})=\int \psi_jd\mu_1.$$
	By the arbitrariness of $\psi_j$, it follows that for any continuous function $\psi: I\to \mathbb{R}$, 
	$$\lim_{n\to \infty}\int \psi d\pi_*(\mu_{n,\vep})=\int \psi d\mu,$$
that is, $\lim_{n\to \infty}\pi_*(\mu_{n,\vep})=\mu_1.$

Given any $n\ge1$ and 	$\varepsilon>0$, let $L_n(\varepsilon)$ denote the
minimal number of $\varepsilon$-balls  in the
$d_n$-metric required to cover a set of $\mu_1$-measure at least $1/2$, where 
$$d_n(x,z):=\max\nolimits_{0\leq i\leq n-1}d(f^i(x),\,f^i(z)),\quad\forall x,z\in I.$$ 
Denote 
$$h_{\mu_1}(f_1,\vep)=\liminf_{n\rightarrow\infty}\dfrac{1}{n}\log L_n(\varepsilon).$$ 
By Katok's equivalent definition of metric entropy \cite{Katok2}, 
it satisfies that 
$$h_{\mu_1}(f_1)=\lim_{\varepsilon\rightarrow0}h_{\mu_1}(f_1,\vep).$$ 
For any small $\delta>0$, one can take $0<\vep<\delta$ such that  $$h_{\mu_1}(f_1,\vep)>h_{\mu_1}(f_1)-\delta.$$	

Note that $\{(x,y)\in I\times\mathbb T:x\in S_n(f_1,A,\varepsilon),y\in S_n(f,x,\varepsilon,\phi)\}$
is an $(n,\varepsilon)$-separated set of $f$. By the arguments of variational principle (see Page 219-221 of \cite{Walters}),  for any $0<\vep<\delta$, there exists an $f$-invariant measure $\mu_{\vep}$  such that  
	\begin{eqnarray}\label{pres} P_{\mu_{\vep}}(f, \phi):=h_{\mu_{\vep}}(f)+\int \phi d\mu_{\vep}\ge P(f,A, \vep, \phi). \end{eqnarray}
Moreover, according to the estimates  in Theorem 1 of \cite{Mil} (the end of Page 806 and the second to last paragraph  of the proof of  Theorem 1),  there exist $A_{\vep}, B_{\vep}>0$ such that
	$$ A_{\vep} r^n \Leb(\mathbb{T}^2)\le P_n(f, x,\vep,\Phi^s) \le B_{\vep} r^n \Leb(\mathbb{T}^2),$$
	and since $\phi=\Phi^s-\log r,$ it follows that
    $$ A_{\vep}  \Leb(\mathbb{T}^2)\le P_n(f, x,\vep,\phi) \le B_{\vep} \Leb(\mathbb{T}^2).$$
	Since $\bigcup_{x\in S_n(f_1,A, \vep)} B_n(x,\vep) \supseteq A$ and $\mu_1(A)\ge \frac12$, we have
	\[L_n(\vep)\le  \sharp S_n(f_1,A,\vep).\] 
    Therefore,
    \begin{eqnarray*}P(f,A, \vep,\phi)&=&\limsup_{n\to \infty} \frac{1}{n} \log \sum\nolimits_{x\in S_n(f_1,A, \vep)} P_n(f,x,\vep,\phi)\\[2mm] &=& \limsup_{n\to \infty} \frac{1}{n} \log \sharp S_n(f_1,A,\vep)\ge h_{\mu_1}(f_1,\vep) >h_{\mu_1}(f_1)-\delta.\end{eqnarray*}
	Therefore, combined with (\ref{pres}), 	
	\begin{eqnarray}\label{entropy}
		h_{\mu_{\vep}}(f)+\int \phi d\mu_{\vep}\ge  h_{\mu_1}(f_1)-\delta.
	\end{eqnarray}
	Letting $\delta\to 0$ and passing to a subsequence $\varepsilon_n$, one obtains an $f$-invariant   $\mu$ such that   
    $\mu_{\vep_n}\to\mu$ as $n\to\infty.$ Since $\pi_*(\mu_{\vep_n})=\mu_1$, it follows that $\pi_*(\mu)=\mu_1$.   

In the following, it is shown that 
$\mu$ satisfies the entropy formula of folding type \eqref{eq non-inv of folding type} and the condition  (H)''. 
It then follows from Theorem \ref{thm1}  that $\mu$ is an inverse SRB measure.

    Since $f_1$ is  piecewise monotone, it is asymptotically entropy expansive \cite{MS}. Moreover, as $f$ is uniformly hyperbolic along the fiber $\mathbb{T}^2$, it follows that $f$ is also asymptotically entropy expansive. Thus, the metric entropy of $f$ is upper semi-continuous \cite{Mis1976}. Therefore, by \eqref{entropy}, 
	\begin{eqnarray*}
		h_{\mu}(f)+\int \phi d\mu\ge h_{\mu_1}(f_1).
	\end{eqnarray*}
	
We observe that for the ergodic measure $\mu_1$, it holds that $h_{\mu_1}(f_1)=F_{\mu_1}(f_1)$. In fact, by Lemma \ref{lem:conditional-entropy-upper-bound},    $h_{\mu_1}(f_1)\ge F_{\mu_1}(f_1)$. Since $\mu_1$  has only non-negative Lyapunov exponent, the folding-type Ruelle inequality implies $h_{\mu_1}(f)\le F_{\mu_1}(f_1).$ Thus, $h_{\mu_1}(f_1)= F_{\mu_1}(f_1)$ follows. Therefore,
\begin{eqnarray}\label{large}
h_{\mu}(f)\ge  h_{\mu_1}(f_1)-\int \phi d\mu=  F_{\mu_1}(f_1)+\log r - \int \Phi ^sd\mu.
\end{eqnarray}
Since $\pi_*(\mu)=\mu_1$,   it follows that  
	\begin{eqnarray}\label{one side}
	\label{small 1}H_{\mu}(\epsilon \mid f^{-1}\epsilon) &\le &  \int 	\log (r c(f_1(x))) d\mu(x,y)\\
	&=&\log r +  \int 	\log (c(f_1(x))) d\mu(x,y)\nonumber  \\&=&   \log r + \int 	\log (c(f_1(x))) d\mu_1(x)\nonumber\\
	&=&  \log r + F_{\mu_1}(f_1). \nonumber 
	\end{eqnarray}
That is, 
	\begin{eqnarray}\label{less than}
	F_{\mu}(f)\le 	 \log r +F_{\mu_1}(f_1).
	\end{eqnarray}
Combining \eqref{less than} and   (\ref{large}), one obtains \begin{eqnarray*}
		h_{\mu}(f)\ge  F_{\mu}(f)- \int \Phi ^sd\mu=F_{\mu}(f)- \int \sum\nolimits_{\lambda_i(x)<0}\lambda_i(x)d\mu .
	\end{eqnarray*}
	On the other hand, by the folding-type Ruelle inequality, 
	\begin{eqnarray}\label{Ruelle}
		h_{\mu}(f)\le  F_{\mu}(f)-  \int \sum\nolimits_{\lambda_i(x)<0}\lambda_i(x)d\mu .
	\end{eqnarray}
	Therefore, the equality  
    \begin{eqnarray*}
		h_{\mu}(f)= F_{\mu}(f) - \int \sum\nolimits_{\lambda_i(x)<0}\lambda_i(x)d\mu
	\end{eqnarray*}
    holds.     
Note that for $\mu$-a.e. $x,$ the three Lyapunov exponents of $f$ include exactly one negative exponent, which we denote by $\lambda^-(x).$ Consequently, 
\begin{eqnarray*}
 h_{\mu}(f)= F_{\mu}(f)-\displaystyle\int\lambda^-(x)d\mu(x).  
\end{eqnarray*}

It remains to verify that condition (H)'' is satisfied. Combining (\label (\ref{large}) and (\ref{Ruelle}), we deduce 
		\begin{eqnarray*} F_{\mu}(f)\ge 	F_{\mu_1}(f_1)+ \log r,\end{eqnarray*}
		which, together with (\ref{less than}), yields
		\begin{eqnarray*} F_{\mu}(f)=	F_{\mu_1}(f_1)+ \log r.\end{eqnarray*}
	This implies that the inequality in (\ref{small 1}) holds as an equality. 
    Therefore,  for $\mu$-a.e. $(x,y)\in I\times  \mathbb{T}^2$,
    \begin{eqnarray}\label{J}
     \frac{1}{J_{f,\mu}(x,y)}=\mu^{f^{-1}\epsilon}(\epsilon(x,y))=\frac{1}{r c(f_1(x))}=\frac{1}{r J_{f_1, \mu_1}(x)}.   
    \end{eqnarray}
	Note that \eqref{J} is independent of $y$. 
	Since the system has a skew product structure, the stable manifold $W^s(x,y)\subset \{x\}\times \mathbb{T}^2$. Thus, for $\mu_1$-a.e. $x\in I,$    
    one can define  
    \begin{eqnarray*}
     J_{f}(x,y)=r J_{f_1, \mu_1}(x)\quad \text{on $W^s(x,y)$}   
    \end{eqnarray*}
     Therefore, for any $(\tilde{x}, \tilde{y})\in  W^s(x,y)$, it holds that    $$\prod_{n=0}^\infty\dfrac{J_{f}(f^n(x,y))}{J_{f}(f^n(\tilde{x}, \tilde{y}))}=1$$
	Thus, condition (H)"  is satisfied by Remark \ref{rem:generalization}.    
	
	Now,  the $f$-invariant measure $\mu$ satisfies both  the entropy formula of folding type and the condition (H)". By the 
    ``if"  part  of Theorem  \ref{thm1},     $\mu$ is  an inverse SRB measure, i.e.,  it  has absolutely continuous conditional measures on the stable manifolds. 	
\end{proof}

\end{document}